\newtheorem{theorem}{Theorem}
\newtheorem{assumption}{Assumption}
\newtheorem{definition}{Definition}
\newtheorem{lemma}{Lemma}
\newtheorem{proposition}{Proposition}
\newtheorem{remark}{Remark}
\numberwithin{equation}{section}
\title{An Efficient Deep Learning Approach for Approximating Parameter-to-Solution Maps of PDEs $^\dag$\footnotetext{\dag~The work described in this paper is supported by the National Natural Science Foundation of China (Grants Nos.12171039 and 12061160462) and Shanghai Science and Technology Program [Project No. 21JC1400600]. The corresponding author is Lei Shi.}}
\author{Guanhang Lei}
\author{Zhen Lei}
\author{Lei Shi}
\author{Chenyu Zeng}
\affil{School of Mathematical Sciences, \linebreak
Shanghai Key Laboratory for Contemporary Applied Mathematics, \linebreak
Fudan University, Shanghai, 200433, P. R. China \linebreak
Email:ghlei21@m.fudan.edu.cn \linebreak
\{zlei, leishi, cyzeng19\}@fudan.edu.cn}
\begin{document}
	\maketitle
\begin{abstract}
	In this paper, we consider approximating the parameter-to-solution maps of parametric partial differential equations (PPDEs) using deep neural networks (DNNs). We propose an efficient approach combining reduced collocation methods (RCMs) and DNNs. In the approximation analysis section, we rigorously derive sharp upper bounds on the complexity of the neural networks. These bounds only depend on the reduced basis dimension rather than the high-fidelity discretization dimension, thereby theoretically guaranteeing the computational efficiency of our approach. In numerical experiments, we implement the RCM using radial basis function finite differences (RBF-FD) and proper orthogonal decomposition (POD), and propose the POD-DNN algorithm. We consider various types of PPDEs and compare the accuracy and efficiency of different solvers. The POD-DNN has demonstrated significantly accelerated inference speeds compared with conventional numerical methods owing to the offline-online computation strategy. Furthermore, by employing the reduced basis methods (RBMs), it also outperforms standard DNNs in computational efficiency while maintaining comparable accuracy. 
\end{abstract}
	
{\textbf{Keywords:} Parametric Partial Differential Equation; Reduced Basis Method; Reduced Collocation Method; Deep Neural Network; Approximation Analysis.}

\section{Introduction}\label{section: Introduction}

Parametric partial differential equations (PDEs) are widely employed in depicting or modeling intricate phenomena within various fields of science and engineering. These equations use parameters to account for variations in physical properties, geometric configurations, and initial or boundary conditions. For instance, within the realm of fluid mechanics, parametric PDEs are pivotal in representing how both liquids and gases move, enabling the simulation of diverse scenarios, like turbulence, wave propagation, and the interaction between different fluid layers, with equation parameters used to describe fluid attributes like viscosity and density, as well as external forces and boundary conditions. Solving parametric PDEs directly is possible via established discretization techniques, including the finite element, spectral, and finite volume methods. However, these methods often require fine discretization (e.g., fine meshing of the solution region) and solving linear systems with thousands of degrees of freedom to obtain sufficiently accurate solutions, resulting in unmanageable demands on computational resources. Additionally, in many practical applications in science and engineering, we often need to solve parametric PDEs in real-time against multiple queries, necessitating repeated and expensive computations, thus further increasing computational costs. In such cases, there is a need to develop efficient numerical methods that allow for rapid simulation of solutions when the equation parameters change.

The reduced basis method (RBM) is widely applied to the numerical simulations of large-scale problems involving parametric PDEs \cite{Quarteroni2016Reduced}. Intuitively, the solutions to parametric PDEs are often determined by several equation parameters, indicating that the solutions associated with all possible parameters constitute a low-dimensional manifold within the solution space. RBM partitions the numerical solution process into offline/online stages. In the offline phase, which typically incurs a relatively higher computational cost, we fully exploit this low-dimensional nature and use some precomputed solutions, i.e., snapshots, to construct what is known as the reduced basis space, a low-dimensional approximation space. During the online phase, we leverage this set of basis functions to form a small-scale algebraic system that can be solved in real time, thus enabling efficient numerical solving of the equation when input parameters change. Based on the low-dimensional assumption of the solution manifold, the solution for any query parameter values can be approximated by the span of these snapshots in the low-dimensional space. Once the low-dimensional space is established, the solution for new parameter values is efficiently solved in the online phase.

Given new parameters, RBM calculates the coefficients of the associated solution under the span of this reduced basis in the online phase. This parameter-coefficients relation can be seen as the so-called parametric map. Although we only need to solve a relatively small-scale linear system in the online phase, the computational cost remains prohibitively expensive for some large-scale practical problems. To further decrease the response time and enhance the efficiency of numerical simulations, we can train deep neural networks (DNNs) to directly learn the parametric map, thereby transforming the online calculation into a feedforward computation within the networks. The DNN-based approach retains the offline-online decomposition feature, and integrating DNN with RBM significantly diminishes the computational expenses during the online phase, albeit at the cost of introducing an additional stage of neural network training in the offline phase. This additional step usually consumes less time compared to constructing the low-dimensional approximation space. Moreover, this end-to-end DNN methodology facilitates the parallel processing of multiple parameters, offering further benefits when simultaneously handling various parameters.

Constructing an efficient RBM is central to designing numerical solutions for parametric maps. The classical RBM was initially integrated with the Galerkin method to solve parametric PDEs that exhibit affine dependence on parameters. Subsequently, reduced collocation methods (RCMs) were developed based on the collocation approach. RCMs offer greater flexibility and efficiency, reducing the online computational cost of solving nonlinear parametric PDEs using the Galerkin approach. Traditional collocation methods, such as pseudospectral methods \cite{Hesthaven2007Spectral}, often require highly structured grids, which may be inconvenient for solving PDEs with irregularly shaped domains. In numerical experiments, to further address the challenges of solving parametric PDEs in irregular domains, motivated by previous work \cite{Chen2016Reduced}, we utilize radial basis functions (RBFs) to construct a meshless RCM. RBF methods have been extensively employed for scattered data interpolation and approximation in high dimensions \cite{Kansa1990Multiquadrics, Wendland2004Scattered, Fornberg2015Solving}. Unlike traditional pseudospectral methods, RBF methods are collocation methods implemented on scattered sets of collocation sites and are not confined to any particular geometric structure. Typically, RBF methods approximate differential operators using global stencils. \cite{AndreiI2000using} first introduces a local RBF technique, which is inspired by the finite difference (FD) methods and referred to as the RBF-FD method. To construct the low-dimensional reduced basis space. Popular approaches in this step include the greedy algorithm \cite{Prudhomme2001Reliable} and the proper orthogonal decomposition (POD) \cite{Christensen.1999Evaluation, Rathinam2003New, Kunisch2001Galerkin}. The greedy algorithm selects one optimal parameter and its associated snapshot by a posteriori error estimate at each iterative step. POD is based on the classic singular value decomposition (SVD) of a matrix, which directly extracts the main part of the snapshot matrix.

Deep learning techniques have been successfully applied to develop efficient methods for solving PDEs. For example, DNN has been effectively implemented across various numerical PDE challenges. The deep Ritz method (DRM) \cite{E2018Deep} and physics-informed neural networks (PINNs) \cite{Raissi2019Physicsinformed, Sirignano2018DGM} have been designed to train neural networks with functional-type losses to approximate underlying PDEs' solutions directly. Neural operator methods like DeepONet \cite{Lu2021Learning}  and Fourier neural operator (FNO) \cite{Li2021Fourier} employ a supervised learning framework in infinite-dimensional input and output function spaces. They utilize various encoding and decoding techniques to simplify the operator regression task into learning within finite-dimensional tensor spaces. Neural operators are trained to learn the PDE solution operators, eliminating the need to train a new neural network for each new input function. Deep learning methods have also been extensively investigated and widely utilized in parametrized systems \cite{Hesthaven2018Non-intrusive, Tripathy2018Deepa, DalSanto2020Dataa, Khoo2021Solvinga, Geist2021Numerical, Kutyniok2022Theoreticala, Lei2022Solving, Franco2023deep}.

In this paper, we approximate the parametric maps of parametric PDEs through a synergistic combination of the RCM and DNNs. The main contribution of this article is the rigorous derivation of the algorithm's complexity upper bounds under reasonable assumptions, as seen in Theorem \ref{main theorem: approximate parametric map}. Specifically, we establish the upper bounds for the depth and the number of non-zero parameters of the ReLU DNNs when approximating the map $\mu \mapsto \mathbf{c}_\mu$ with precision $\varepsilon > 0$. Here, $\mu$ denotes the parameter of the parametric PDE, and $\mathbf{c}_\mu$ represents the coefficient vector of the solution $\mathbf{u}_\mu$ within the span of the reduced basis. Our derived upper bounds indicate that the network's depth is approximately $\mathcal{O}(\log_2(1/\varepsilon )\log_2(\log_2(1/\varepsilon)))$, and the number of non-zero parameters is $\mathcal{O}(d^3\log^2_2(1/\varepsilon))$, where $d$ is the dimension of the reduced basis. This bound on the number of non-zero parameters is better than the previous result $\mathcal{O}(d^3\log^2_2(1/\varepsilon)\log^2_2(\log_2(1/\varepsilon)))$ in \cite{Kutyniok2022Theoreticala}. This improvement is attributed to the application of a different Neumann series expansion in the proof, which is motivated by our previous work on ReQU DNNs \cite{Lei2022Solving}. Our complexity analysis theoretically guarantees the efficiency of the deep learning algorithm. 

In numerical experiments, we consider an explicit discretization method and RBM to propose the POD-DNN algorithm. POD-DNN leverages the offline-online computational strategy inherent in DNNs, showcasing reduced computational costs during the online phase compared to the RBF-based RCM proposed in \cite{Chen2016Reduced}, as evidenced by our numerical experiments. Furthermore, POD-DNN significantly reduces the output dimension of DNNs, facilitating easier network training, convergence, and inference processes. We first test POD-DNN in typical linear PPDEs where the parameter space is low-dimensional. We also test our algorithm on more general PPDEs, including nonlinear PDEs, time-dependent PDEs, and PDEs with infinite-dimensional functional parameters.

The rest of the paper is organized as follows. In Section \ref{section: Preliminaries and Main Results}, we briefly review the PPDE problem, parametric maps, and the combination of RBM and DNN to numerically simulate parametric maps. We then present our assumptions and main result, delineated as Theorem \ref{main theorem: approximate parametric map}. In Section \ref{section: Theoretic Analysis}, we commence with basic definitions and operations relevant to neural networks, leading to the proof of our main theorem. The experimental validation of our algorithm is detailed in Section \ref{section: Numerical Experiments}, where we also compare the numerical results with conventional methods, non-neural network-based offline-online algorithms, and neural operators in different scenarios.

\section{Preliminaries and Main Results}\label{section: Preliminaries and Main Results}
\subsection{Parametric PDE and Reduced Basis Method}
We first consider the following general parametric PDE
\begin{equation}\label{PPDE}
    \mathcal{L}(u_\mu; \mu) = 0 \quad \text{in } H
\end{equation}
where $\mu= (\mu_1, \ldots, \mu_p) \in \mathcal{D} \subset \mathbb{R}^p$ with $\mathcal{D}$ a prescribed $p$-dimensional real parameter domain, $\mathcal{L}(\cdot; \mu)$ is a parametrized operator that depends on the parameter $\mu$ and represents a parametrized PDE, $u_\mu \in H$ is the unknown solution defined on a physical domain $\Omega$ and associated with the parameter $\mu$, and $H$ is a Hilbert space. The PPDE above is presented in the strong form, and we can consider its weak (variational) form:
\begin{equation}\label{PPDE: weak form}
	\langle \mathcal{L}(u_\mu; \mu), v \rangle = 0, \forall v \in H.
\end{equation}

For numerical computation, we consider a discretization of $H$ represented by a finite-dimensional subspace, typically a high-fidelity discretization $H^{\mathrm{h}} \subset H$ which is potentially high-dimensional such that the corresponding high-fidelity solution (or snapshot) $u^{\mathrm{h}}_{\mu} \in H^{\mathrm{h}}$ can reach an arbitrary small error $\varepsilon>0$:
\[
	\sup_{\mu \in \mathcal{D}} \|u_\mu - u^{\mathrm{h}}_{\mu}\|_{H} \leq \varepsilon.
\]
Then one can design a numerical algorithm to approximate the so-called parametric map $\mu \mapsto u^{\mathrm{h}}_{\mu}$. Let $D = D(\varepsilon)$ denote the dimension of the high-fidelity space $H^{\mathrm{h}}$. In practice, $D$ is typically a large number that leads to prohibitively high numerical computation costs. The starting point of RBM lies in the property that, under some regularity assumptions of PPDEs, there exists a reduced basis space $H^{\mathrm{rb}}$ with much lower dimension $d = d(\varepsilon') \ll D$ while the approximation error $\varepsilon' \geq \varepsilon$ remains relatively small:
\[
	\sup_{\mu \in \mathcal{D}} \inf_{u \in H^{\mathrm{rb}}} \|u - u_\mu\|_H \leq \varepsilon'.
\]
This property can be established using Kolmogorov $N$-width in the context of PPDEs where the solution manifold is usually a low-dimensional compact subset of $H$, see \cite{Kutyniok2022Theoreticala}. Let $(\psi_i)_{i=1}^d$ denote the basis of $H^{\mathrm{rb}}$. $(\psi_i)_{i=1}^d$ can be generated from linear combinations of a basis $\{e_j\}_{j=1}^D$ of the high-fidelity space by a transformation matrix $V \in \mathbb{R}^{D \times d}$:
\[
	\psi_i = \sum_{j=1}^D V_{j,i}e_j, \quad i = 1,\ldots, d.
\]
The numerical algorithm consequently shifts to computing the reduced basis solution 
\[
	u^{\mathrm{rb}} = \sum_{i=1}^d \left(\mathbf{c}_\mu\right)_i \psi_i
\]
by approximating the new parametric map $\mu \mapsto \mathbf{c}_\mu \in \mathbb{R}^d$. This map is $\mathbb{R}^p \to \mathbb{R}^d$ and hence can be approximated by training neural networks $\mathbf{NN}:\mathbb{R}^p \to \mathbb{R}^d$.

Previous works \cite{Kutyniok2022Theoreticala, Lei2022Solving} consider the combination of DNN and RBM in the Galerkin framework, where the high-fidelity solutions can be obtained from the finite element method. Assumptions for their DNN approximation analysis are on the approximability of the stiffness matrix and the discretized right-hand side with respect to the Galerkin reduced basis. In this paper, motivated by \cite{Chen2013Reduceda, Chen2016Reduced}, we consider RBM in the collocation framework, also known as the so-called RCM. We now consider the following linear PPDE:
\begin{equation}\label{linear PPDE}
	\left\{ 
        \begin{aligned}
            &\mathcal{L}(\mu)u_\mu(x)=f(x; \mu), \quad x \in \Omega \subset \mathbb{R}^n, \\
            &\mathcal{B}(\mu)u_\mu(x)=g(x; \mu), \quad x \in \partial\Omega,
        \end{aligned}
    \right.
\end{equation}
where $x \in \mathbb{R}^n$ represents the spatial variable, $\mathcal{L}(\mu)$ is a parametrized differential operator, and $\mathcal{B}(\mu)$ is the boundary condition. In collocation framework, we assume that the physical domain $\Omega$ is discretized onto a set of $N$ collocation points $X = \left\{x_i\right\}_{i=1}^N \subset \bar{\Omega}$, which contains interior and boundary nodes, i.e., $X= X_{\Omega}\cup X_{\partial \Omega}$ where $X_{\Omega}=\left\{x_1, \ldots, x_{N_I}\right\} \subset \Omega$ and $X_{\partial \Omega}=\left\{x_{N_I+1}, \ldots,x_N\right\} \subset \partial \Omega$. The differential operator $\mathcal{L}(\mu)$ and boundary condition $\mathcal{B}(\mu)$ are also discretized into one compact matrix form $A_\mu \in \mathbb{R}^{N \times N}$ through some appropriate scheme. We find the solution to the equation
\begin{equation}\label{linear equation system}
	A_\mu\mathbf{u}_\mu = 
    \begin{bmatrix}
        \mathbf{f}_\mu \\
        \mathbf{g}_\mu
    \end{bmatrix},
\end{equation}
where $\mathbf{f}_\mu = \left(f(x_1), \ldots, f(x_{N_I})\right)^T$ and $\mathbf{g}_\mu = \left(g(x_{N_I+1}), \ldots, g(x_{N})\right)^T$ are discretized right-hand side function. The solution is then the discretized approximated solution on collocation points: $\mathbf{u}_\mu = \left(u(x_1), \ldots, u(x_{N})\right)^T$, which is also the high-fidelity solution for parameter $\mu$ in the context of RBM. 

To construct a reduced basis, we sample $n_s$ parameter samples $\Xi = \left\{\mu^1, \ldots, \mu^{n_s}\right\}$ from the parameter space (typically by a random generation or a uniform lattice) and compute the associated high-fidelity snapshots $\left\{\mathbf{u}_{\mu^1}, \ldots, \mathbf{u}_{\mu^{n_s}}\right\}$ by \eqref{linear equation system}. Then we may adopt the greedy algorithm or principal component analysis to generate the reduced basis. For the greedy algorithm \cite{Chen2013Reduceda, Chen2016Reduced, Quarteroni2016Reduced, Chen2018Greedy}, we sequentially select optimal parameters by some posterior error estimation criterion until the posterior error is sufficiently small. The associated snapshots of these selected parameters are then modified by a Gram-Schmidt transformation and form the reduced basis. For principal component analysis, we employ the POD on the high-fidelity snapshot matrix $S = \left[\mathbf{u}_{\mu^1} \vert \cdots \vert \mathbf{u}_{\mu^{n_s}}\right] \in \mathbb{R}^{N \times n_s}$. The first few left singular vectors form the orthogonal reduced basis.

Now, assume that we obtain the orthogonal reduced basis $\{\boldsymbol{\zeta}_1, \ldots, \boldsymbol{\zeta}_{d}\}$ and the corresponding transformation matrix $V = [\boldsymbol{\zeta}_1 \vert \cdots \vert \boldsymbol{\zeta}_{d}] \in \mathbb{R}^{N \times d}$. Then for any new $\mu \in \mathcal{D}$, we can approximate $\mathbf{u}_{\mu}$ by a span of the reduced basis, i.e., 
\[
    \mathbf{u}_{\mu} \approx \sum_{i=1}^{d} \left(\mathbf{c}_\mu\right)_i\boldsymbol{\zeta}_i = V\mathbf{c}_{\mu}.
\]
Substitute it into \eqref{linear equation system}, we have
\begin{equation}\label{B_mu}
	\begin{aligned}
		B_{\mu}\mathbf{c}_{\mu} = 
		\begin{bmatrix}
			\mathbf{f}_\mu \\
			\mathbf{g}_\mu
		\end{bmatrix}, \quad
		B_{\mu} := A_\mu V \in \mathbb{R}^{N \times d}.
	\end{aligned}
\end{equation}
This is typically an overdetermined system, hence we seek the least square solution, which is given by the Moore-Penrose inverse
\begin{equation}\label{Moore-Penrose inverse}
    B_{\mu}^{\dagger}=\left(B_{\mu}^T B_{\mu}\right)^{-1} B_{\mu}^T.
\end{equation}
Here, we assume that the reduced basis is well-chosen such that $B_{\mu}$ is column full-rank. The least square solution reads 
\begin{equation}\label{least square solution}
	\mathbf{c}_{\mu} = B_{\mu}^{\dagger}
	\begin{bmatrix}
        \mathbf{f}_\mu \\
        \mathbf{g}_\mu
    \end{bmatrix}
	= \left(B_{\mu}^T B_{\mu}\right)^{-1} B_{\mu}^T 
	\begin{bmatrix}
        \mathbf{f}_\mu \\
        \mathbf{g}_\mu
    \end{bmatrix}.
\end{equation}
\eqref{least square solution} gives analysis implicit form of the parametric map $\mu \mapsto \mathbf{c}_{\mu}$. 

\subsection{Assumptions and Main Results}
To approximate the parametric map $\mu \mapsto \mathbf{c}_{\mu}$ using DNNs, \eqref{least square solution} motivates us to design DNNs to approximate matrix inversion and matrix multiplication, which aligns with our proof of the main theorem. It remains to approximate the implicit maps $\mu \mapsto B_{\mu}, \mu \mapsto \mathbf{f}_\mu$ and $\mu \mapsto \mathbf{g}_\mu$. Before we state the assumptions on these approximations, we first introduce the mathematical definition of deep ReLU networks, incorporating symbols and conventions that will be used consistently or referenced from the literature \cite{Kutyniok2022Theoreticala}.

\begin{definition}
	Let $L, n_0\in \mathbb{N}$. A neural network $\Phi$ with input dimension $n_0$ and number of layers $L = L(\Phi)$ is a matrix-vector sequence
	\[
		\Phi=\left(\left(W_{1}, b_{1}\right), \cdots,\left(W_{L}, b_{L}\right)\right),
	\]
	where $W_{i} \in \mathbb{R}^{n_{i} \times n_{i-1}}$ are weight matrices, $b_{i} \in \mathbb{R}^{n_{i}}$ are bias vectors, and $n_{i} \in \mathbb{N}$ are width. Let $\sigma: \mathbb{R} \rightarrow \mathbb{R}$ be ReLU activation function, i.e., $\sigma(x) = \max\{0,x\}$. We define the realization of the network 
	\[
		R(\Phi): \mathbb{R}^{n_0} \rightarrow \mathbb{R}^{n_{L}}, \quad R(\Phi)(x)=x_{L},
	\]
	where $x_{L}$ is expressed as follows:
	\[
		\left\{\begin{array}{l}
			x_{0}:=x, \\
			x_{i}:=\sigma\left(W_{i} x_{i-1}+b_{i}\right), \quad i = 1,2, \ldots, L-1, \\
			x_{L}:=W_{L} x_{L-1}+b_{L}.
		\end{array}\right.
	\]
	Here, the ReLU function $\sigma$ operates on the vector in an element-wise manner. Define the number of non-zero parameters in $i$-th layer $M_{i}(\Phi):=\left\|W_i\right\|_{0}+\left\|b_i\right\|_{0}$ and the total number of non-zero parameters $M(\Phi):=\sum_{i=1}^{L} M_{i}(\Phi)$. 
\end{definition}
\begin{definition}
	Let $A \in \mathbb{R}^{n \times k}$. We denote
	\[
		\mathrm{vec}(A):=\left(A_{1,1}, \cdots, A_{n, 1}, \cdots, A_{1, k}, \cdots, A_{n, k}\right)^{T} \in \mathbb{R}^{nk}.
	\]
	Moreover, for a vector $v=\left(v_{1,1}, \cdots, v_{n, 1}, \cdots, v_{1, k}, \cdots, v_{n, k}\right)^{T} \in \mathbb{R}^{nk}$, we set
	\[
		\mathrm{matr}(v):=\left(v_{i, j}\right)_{i=1, \ldots, n, j=1, \ldots, k} \in \mathbb{R}^{n \times k}.
	\]
\end{definition}
\begin{assumption}\label{assumption: spectrum of BTB}
	The spectrum $\Lambda(B_{\mu}^T B_{\mu}) \subset \left[\beta^2, \alpha^2\right]$ with $0<\beta^2<\alpha^2$ for all $\mu \in \mathcal{D}$ and possible reduced basis $V$. The norm $\|(\mathbf{f}^T_\mu, \mathbf{g}^T_\mu)^T\|_2 \leq \gamma$ with $\gamma > 0$ for all $\mu \in \mathcal{D}$. 
\end{assumption}
\begin{assumption}\label{assumption: mu to B}
	For any $\varepsilon>0$ and possible reduced basis $V$, there exists a ReLU neural network $\Phi_{V, \varepsilon}^{B}$ with $p$-dimensional input and $Nd$-dimensional output such that
	\[
		\sup_{\mu \in \mathcal{D}}\left\|B_{\mu}-\mathrm{matr}\left(R\left(\Phi_{V, \varepsilon}^{B}\right)(\mu)\right)\right\|_{2} \leq \varepsilon,
	\]
   	which implies another ReLU neural network $\Phi_{V, \varepsilon}^{B^T}$ such that 
	\[
		\sup_{\mu \in \mathcal{D}}\left\|B^T_{\mu}-\mathrm{matr}\left(R\left(\Phi_{V, \varepsilon}^{B^T}\right)(\mu)\right)\right\|_{2} \leq \varepsilon.
	\]
	We set $L(B; V, \varepsilon) := L(\Phi_{V, \varepsilon}^{B}) = L(\Phi_{V, \varepsilon}^{B^T})$ and $M(B; V, \varepsilon) := M(\Phi_{V, \varepsilon}^{B}) = M(\Phi_{V, \varepsilon}^{B^T}).$
\end{assumption}
\begin{assumption}\label{assumption: mu to fg}
	For any $\varepsilon>0$, there exists a ReLU neural network $\Phi_{\varepsilon}^{\mathbf{fg}}$ with $p$-dimensional input and $N$-dimensional output such that
	\[
		\sup_{\mu \in \mathcal{D}}\left\|(\mathbf{f}^T_\mu, \mathbf{g}^T_\mu)^T - R\left(\Phi_{\varepsilon}^{\mathbf{fg}}\right)(\mu)\right\|_{2} \leq \varepsilon.
	\]
	We set $L(\mathbf{fg}; \varepsilon) := L(\Phi_{\varepsilon}^{\mathbf{fg}})$ and $M(\mathbf{fg}; \varepsilon) := M(\Phi_{\varepsilon}^{\mathbf{fg}}).$
\end{assumption}
\begin{remark}\label{remark: assumption}
	These assumptions can be fulfilled in a variety of cases. Recall the definition of $B_{\mu}$ in \eqref{B_mu}, we note that approximating the map $\mu \mapsto B_{\mu}$ is essentially equivalent to approximating the map of stiffness matrix $\mu \mapsto A_\mu$. These maps, together with $\mu \mapsto (\mathbf{f}^T_\mu, \mathbf{g}^T_\mu)^T$ in \autoref{assumption: mu to fg}, can be easily constructed or approximated by neural networks if the parameter dependency of the PPDE \eqref{linear PPDE} exhibits a simple form, such as a linear or rational nature. See the discussion in our numerical experiments for various equations.
\end{remark}
With these assumptions, we can now construct ReLU DNNs that approximate the parametric map $\mu \mapsto \mathbf{c}_\mu$. We present our main result.
\begin{theorem}\label{main theorem: approximate parametric map}
	Suppose that \autoref{assumption: spectrum of BTB}, \autoref{assumption: mu to B}, and \autoref{assumption: mu to fg} hold. For any $\varepsilon \in (0,1/4)$ and possible reduced basis $V$, there exists a ReLU neural network $\Phi_{\mathrm{para};\varepsilon}^{\mathbf{c}}$ with $p$-dimensional input and $d$-dimensional output that satisfies the following properties.
	\begin{enumerate}
		\item $\displaystyle \sup_{\mu \in \mathcal{D}}\left\|\mathbf{c}_\mu - R\left(\Phi_{\mathrm{para};\varepsilon}^{\mathbf{c}}\right)(\mu)\right\|_{2} \leq \varepsilon.$
		\item There exists a constant $C_{L}^{\mathbf{c}}$ only depending on $\alpha,\beta,\gamma$, such that
		\[
			\begin{aligned}
				L(\Phi_{\mathrm{para};\varepsilon}^{\mathbf{c}}) \leq{}& C^{\mathbf{c}}_{L}\cdot\log_2(\log_2(1/\varepsilon))\cdot(\log_2(1/\varepsilon) +  \log_2 d) \\
				&+C^{\mathbf{c}}_{L} \log_2 N + L(B; V, \varepsilon_1) + L(B; V, \varepsilon_{3}) + L(\mathbf{fg}; \varepsilon_2),
			\end{aligned}
		\]
		\item There exists a constant $C_{M}^{\mathbf{c}}$ only depending on $\alpha,\beta,\gamma$, such that
		\[
			\begin{aligned}
				&M(\Phi_{\mathrm{para}; \varepsilon_{1:5}}^{\mathbf{c}}) \\
				\leq{}& C^{\mathbf{c}}_{M}N d^2 \cdot (\log_2(1/\varepsilon) + \log_2 N) \\
				&+ C^{\mathbf{c}}_{M}d^2 \log_2(1/\varepsilon) \cdot (\log_2(\log_2(1/\varepsilon)) + d)\cdot(\log_2(1/\varepsilon) + \log_2 d) \\
				&+ C^{\mathbf{c}}_{M}d^2  \cdot \log_2(\log_2(1/\varepsilon)) \cdot \log_2 N \\
				&+ 16M(B; V, \varepsilon_1) + 16M(\mathbf{fg}; \varepsilon_2) + 8M(B; V, \varepsilon_{3})  \\
				&+80NdL(B; V, \varepsilon_1) + 80NdL(\mathbf{fg}; \varepsilon_2) + 16d^2L(B; V, \varepsilon_{3}).
			\end{aligned}
		\]
	\end{enumerate}
	Here $\displaystyle \varepsilon_1:= \min \left\{\frac{\varepsilon\beta^2}{12\gamma}, \frac{\beta \sqrt{\varepsilon}}{4}, \frac{\alpha}{4}, \frac{\sqrt{\alpha \gamma}}{2}\right\}, \varepsilon_2:= \min \left\{\frac{\varepsilon\beta^2}{12\alpha}, \frac{\beta \sqrt{\varepsilon}}{4}, \frac{\gamma}{4}, \frac{\sqrt{\alpha \gamma}}{2}\right\}$ and \\$\displaystyle \varepsilon_{3}:= \min\left\{\frac{\varepsilon \beta^4}{144\alpha^2 \gamma}, \frac{\beta^2\sqrt{6\alpha\gamma\varepsilon}}{24\alpha\gamma}, \frac{\beta^2}{12\alpha}, \frac{\beta}{3}\right\}$.
\end{theorem}
\autoref{main theorem: approximate parametric map} shows that, for parametric map approximation, the ReLU DNN's depth is $\mathcal{O}(\log_2(1/\varepsilon )\log_2(\log_2(1/\varepsilon)))$, and the number of non-zero parameters is $\mathcal{O}(d^3\log^2_2(1/\varepsilon))$. Our result theoretically guarantees that the DNN approximation can overcome the curse of dimensionality, as the complexity bound only depends on the reduced basis dimension $d$ rather than the high-fidelity dimension $N$. 

\autoref{main theorem: approximate parametric map} extends the Galerkin framework results, i.e., Theorem 4.3 of \cite{Kutyniok2022Theoreticala} and Theorem 5.4 of \cite{Lei2022Solving}, to the collocation framework. Furthermore, we adopt a different expansion form of the Neumann series in the proof and hence obtain a sharper bound on the number of non-zero parameters compared with $\mathcal{O}(d^3\log^2_2(1/\varepsilon)\log^2_2(\log_2(1/\varepsilon)))$ of \cite{Kutyniok2022Theoreticala}. Compared to the bounds we previously obtained in \cite{Lei2022Solving} for ReQU DNNs ($\mathcal{O}(\log_2(\log_2(1/\varepsilon)))$ for depth and $\mathcal{O}(d^3\log^2_2(\log_2(1/\varepsilon)))$ for number of non-zero parameters), the bounds we obtain for ReLU DNNs are looser. This is because ReQU DNNs can perfectly represent matrix multiplication with zero error, while ReLU DNNs only approximate it. See Proposition 4.3 of \cite{Lei2022Solving} and Proposition 3.7 of \cite{Kutyniok2022Theoreticala} in this paper. However, notice that ReLU DNNs are more widely used in practice, so \autoref{main theorem: approximate parametric map} still holds significant importance.

\section{Theoretic Analysis}\label{section: Theoretic Analysis}

In this section, we first present some basic estimates on neural network approximation in the form of lemmas. Subsequently, we will provide the proof of \autoref{main theorem: approximate parametric map}.

\subsection{Basic Definitions and Estimates}
We first introduce concatenation and parallelization operations for neural networks, which may be used to construct complex neural networks from simple networks. Furthermore, we derive the complexity estimates of operations of ReLU neural networks.
Next, we define the concatenation of two neural networks.
\begin{definition}\label{neural networkdefinition}
	Let $L_{1}, L_{2} \in \mathbb{N}$ and let 
	$\Phi^{1}=\left(\left(W_{1}^{1}, b_{1}^{1}\right), \ldots,\left(W_{L_{1}}^{1}, b_{L_{1}}^{1}\right)\right)$, $\Phi^{2}=\left(\left(W_{1}^{2}, b_{1}^{2}\right), \ldots\right.$, $\left.\left(W_{L_{2}}^{2}, b_{L_{2}}^{2}\right)\right)$ be two neural networks with activation function $\sigma$. The input dimension of $\Phi^{1}$ is the same as the output dimension of $\Phi^{2}$. Next, we denote $\Phi^{1} \circ \Phi^{2}$ as the concatenation of $\Phi^1$, $\Phi^2$ as follows:
	\[
		\begin{aligned}
			\Phi^{1} \circ \Phi^{2}:=\bigg(&\left(W_{1}^{2}, b_{1}^{2}\right), \ldots,\left(W_{L_{2}-1}^{2}, b_{L_{2}-1}^{2}\right), \\
			&\left(W_{1}^{1} W_{L_{2}}^{2}, W_{1}^{1} b_{L_{2}}^{2}+b_{1}^{1}\right),\left(W_{2}^{1}, b_{2}^{1}\right), \ldots,\left(W_{L_{1}}^{1}, b_{L_{1}}^{1}\right)\bigg),
		\end{aligned}
	\]
	which yields $L(\Phi^{1} \circ \Phi^{2}) = L_1 + L_2 - 1$.
\end{definition}
Next lemma shows that $M\left(\Phi^{1} \circ \Phi^{2}\right)$ can be estimated by $\max \left\{M\left(\Phi^{1}\right), M\left(\Phi^{2}\right)\right\}$ in some special cases.
\begin{lemma}\label{lemma: M of concatenation}\cite[Lemma A.1]{Kutyniok2022Theoreticala}
	Let $\Phi$ be a neural network with $L(\Phi)$ layers, $n_0$-dimensional input and $n_L$-dimensional output. If $W \in \mathbb{R}^{1 \times n_L}$, then, for all $i=1, \ldots, L(\Phi)$,
	\[
		M_{i}(((W, 0)) \circ \Phi) \leq M_{i}(\Phi).
	\]
	In particular, $M((W, 0) \circ \Phi) \leq M(\Phi) .$ Moreover, if $W \in \mathbb{R}^{n_0 \times n}$ such that, for every $i \leq n_0$ there is at most one non-zero element in the $i$-th row $W_{i, :}$, then, for all $i=1, \ldots, L(\Phi)$,
	\[
		M_{i}\left(\Phi \circ\left(\left(W, \mathbf{0}\right)\right)\right) \leq M_{i}(\Phi).
	\]
	In particular, $M\left(\Phi \circ\left(\left(W, \mathbf{0}\right)\right)\right) \leq M(\Phi)$.
\end{lemma}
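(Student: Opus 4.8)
The plan is to unwind Definition~\ref{neural networkdefinition}: in each of the two concatenations exactly one ``seam'' layer of $\Phi$ is altered, while every other layer is copied verbatim, so it suffices to control $M_i$ at that single seam layer and then sum over $i$. The arithmetic at the seam reduces to two elementary $\ell_0$ inequalities about matrix--vector products.

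For the first assertion, write $\Phi=\big((W_1,b_1),\dots,(W_L,b_L)\big)$ with $L=L(\Phi)$. Since $(W,0)$ is a one-layer network, Definition~\ref{neural networkdefinition} gives
\[
	((W,0))\circ\Phi=\big((W_1,b_1),\dots,(W_{L-1},b_{L-1}),(WW_L,\,Wb_L)\big),
\]
which also covers the case $L=1$, where the sequence collapses to $\big((WW_1,Wb_1)\big)$. Hence $M_i(((W,0))\circ\Phi)=M_i(\Phi)$ for $i<L$, and for $i=L$ I must show $\|WW_L\|_0+\|Wb_L\|_0\le\|W_L\|_0+\|b_L\|_0$. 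The $j$-th entry of $WW_L\in\mathbb{R}^{1\times n_{L-1}}$, namely $\sum_k W_{1k}(W_L)_{kj}$, can be nonzero only if the $j$-th column of $W_L$ contains a nonzero entry; distinct columns contribute disjoint nonzero entries of $W_L$, so the number of nonzero columns of $W_L$ is at most $\|W_L\|_0$, giving $\|WW_L\|_0\le\|W_L\|_0$. The product $Wb_L$ is a scalar, so $\|Wb_L\|_0\le1$, and it vanishes when $b_L=0$; hence $\|Wb_L\|_0\le\|b_L\|_0$ in every case. Adding these and summing over layers yields $M((W,0)\circ\Phi)\le M(\Phi)$.

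For the second assertion, $((W,\mathbf{0}))$ is the \emph{inner} network, and Definition~\ref{neural networkdefinition} gives
\[
	\Phi\circ((W,\mathbf{0}))=\big((W_1W,\,W_1\mathbf{0}+b_1),(W_2,b_2),\dots,(W_L,b_L)\big)=\big((W_1W,b_1),(W_2,b_2),\dots,(W_L,b_L)\big),
\]
so only the first layer changes and it suffices to prove $\|W_1W\|_0\le\|W_1\|_0$, the bias $b_1$ being untouched. Here $W_1\in\mathbb{R}^{n_1\times n_0}$ and $W\in\mathbb{R}^{n_0\times n}$, and by hypothesis each row $W_{l,:}$ ($l\le n_0$) has at most one nonzero entry; when it has one, let $k(l)$ be its column index. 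Then $(W_1W)_{jk}=\sum_l(W_1)_{jl}W_{lk}$ can be nonzero only if there is an $l$ with $(W_1)_{jl}\ne0$, $W_{l,:}\ne\mathbf{0}$, and $k=k(l)$. Thus the support of $W_1W$ is contained in the image of $\{(j,l):(W_1)_{jl}\ne0\}$ under the map $(j,l)\mapsto(j,k(l))$, whose cardinality is at most $\|W_1\|_0$. Therefore $M_1(\Phi\circ((W,\mathbf{0})))=\|W_1W\|_0+\|b_1\|_0\le\|W_1\|_0+\|b_1\|_0=M_1(\Phi)$, and $M_i$ is unchanged for $i\ge2$, so $M(\Phi\circ((W,\mathbf{0})))\le M(\Phi)$.

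The only genuinely substantive step is the last counting argument: the row-sparsity hypothesis on $W$ is precisely what prevents right-multiplication $A\mapsto AW$ from spreading out the nonzero pattern of $A$ (a dense $W$ would in general destroy sparsity), whereas on the left side the single output row of $(W,0)$ is what keeps the bias contribution bounded. Everything else is routine bookkeeping with the concatenation formula of Definition~\ref{neural networkdefinition}.
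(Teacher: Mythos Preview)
Your proof is correct. The paper does not supply its own proof of this lemma: it is quoted verbatim from \cite[Lemma~A.1]{Kutyniok2022Theoreticala} and left unproved, so there is nothing to compare against beyond noting that your argument---unwinding Definition~\ref{neural networkdefinition}, observing that only the seam layer changes, and bounding $\|WW_L\|_0$, $\|Wb_L\|_0$, $\|W_1W\|_0$ by elementary $\ell_0$ counting---is exactly the natural one.
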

However, there is no bound on $M\left(\Phi^{1} \circ \Phi^{2}\right)$ that is linear in $M\left(\Phi^{1}\right)$ and $M\left(\Phi^{2}\right)$ in general case. Hence, we introduce an alternative sparse concatenation to control the number of non-zero parameters. We need the following lemma to construct the ReLU neural network of the identity function.
\begin{lemma}\cite[Lemma 3.3]{Kutyniok2022Theoreticala}
	For any $n, L \in \mathbb{N}$, there exists a neural network $\Phi_{n, L}^{\mathbf{Id}}$ with input dimension $n$ and output dimension $n$ such that
	\[
		\begin{aligned}
			R\left(\Phi_{n, L}^{\mathbf{Id}}\right) &=\mathbf{I} \mathbf{d}_{\mathbb{R}^n}, \\
			M(\Phi_{n, L}^{\mathbf{Id}})&\leq 2nL,
		\end{aligned}
	\]
and all the non-zero parameters are $\{-1, 1\}$-valued.
\end{lemma}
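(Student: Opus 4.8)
The goal is to construct, for given $n, L \in \mathbb{N}$, a ReLU network whose realization is the identity on $\mathbb{R}^n$, with depth $L$ and at most $2nL$ non-zero parameters, all drawn from $\{-1, 1\}$. The basic building block is the scalar identity trick $x = \sigma(x) - \sigma(-x)$, valid because $\sigma(x) - \sigma(-x) = \max\{0,x\} - \max\{0,-x\} = x$ for all real $x$. The plan is to use this to propagate each of the $n$ coordinates through $L$ layers without interaction between coordinates, so that the whole network is a ``parallel'' stack of $n$ copies of a one-dimensional identity network.

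First I would handle the one-dimensional case with $L$ layers. For $L = 1$, take $\Phi = ((W_1, b_1))$ with $W_1 = (1) \in \mathbb{R}^{1 \times 1}$ and $b_1 = 0$; this has one non-zero parameter, well within the bound $2nL = 2$. For $L \geq 2$, the first layer maps $x \mapsto (\sigma(x), \sigma(-x))^T \in \mathbb{R}^2$ using $W_1 = (1, -1)^T$ and $b_1 = 0$; each intermediate layer $i = 2, \ldots, L-1$ keeps the pair alive by applying $\sigma$ componentwise through the $2\times 2$ identity-like map that sends $(a, b)^T \mapsto (\sigma(a), \sigma(b))^T = (a, b)^T$ on the nonnegative orthant where the signal lives, i.e. $W_i = I_2$, $b_i = 0$; and the last layer $W_L = (1, -1) \in \mathbb{R}^{1 \times 2}$, $b_L = 0$ recombines $\sigma(x) - \sigma(-x) = x$. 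Counting non-zero parameters: layer $1$ contributes $2$, each of the $L - 2$ middle layers contributes $2$, and layer $L$ contributes $2$, for a total of $2L$, and all entries are in $\{-1, 1\}$. Then I would define $\Phi_{n,L}^{\mathbf{Id}}$ as the parallelization of $n$ independent copies of this scalar network — block-diagonal weight matrices $\mathrm{diag}(W_i^{(1)}, \ldots, W_i^{(n)})$ and stacked zero biases — which multiplies both the layer count's parameter contribution and the totals by $n$, giving $M(\Phi_{n,L}^{\mathbf{Id}}) \leq 2nL$, with realization $\mathbf{Id}_{\mathbb{R}^n}$ by construction and all parameters still $\{-1,1\}$-valued.

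I expect no serious obstacle here; the only points requiring a little care are the boundary cases $L = 1$ (where no $\sigma$ is applied on the final layer, so the naive two-channel construction is unnecessary and one should just use the bare $1\times 1$ matrix) and verifying that the signal entering each intermediate ReLU is genuinely nonnegative so that $\sigma$ acts as the identity there — which holds because after the first layer every coordinate is of the form $\sigma(\cdot) \geq 0$ and the subsequent identity maps preserve nonnegativity. If the paper has already introduced a formal parallelization operation (analogous to Definition \ref{neural networkdefinition} for concatenation), I would phrase the last step as an application of that operation together with its parameter-count bound; otherwise I would just write the block-diagonal network out explicitly. The bound $2nL$ is then immediate from the per-layer count of $2n$ across $L$ layers.
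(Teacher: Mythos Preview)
Your construction is correct and is precisely the standard argument for this result: the paper does not supply its own proof here but cites the lemma from \cite{Kutyniok2022Theoreticala}, where the same ReLU identity $x=\sigma(x)-\sigma(-x)$ is used to build the network layer by layer. Your handling of the boundary case $L=1$ and the parameter count $2nL$ via block-diagonal parallelization matches that reference's approach.
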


\begin{definition}
Let $\Phi^1, \Phi^2$ be two neural networks such that the output dimension of $\Phi^2$ and the input dimension of $\Phi^1$ equal $n \in \mathbb{N}$. Then, the sparse concatenation of $\Phi^1$ and $\Phi^2$ is defined as
\[
	\Phi^1 \odot \Phi^2:=\Phi^1 \circ \Phi_{n, 1}^{\mathbf{I d}} \circ \Phi^2
\]
\end{definition}
We now introduce the parallelization operation.
\begin{definition}
	If $\Phi^{1}, \ldots, \Phi^{k}$ are neural networks that have equal input dimension and number of layers, such that $\Phi^{i}=\left(\left(W_{1}^{i}, b_{1}^{i}\right), \ldots,\left(W_{L}^{i}, b_{L}^{i}\right)\right)$. We define the parallelization of $\Phi^{1}, \ldots, \Phi^{k}$ with
	\[
		P\left(\Phi^{1}, \ldots, \Phi^{k}\right):= \left(\left(W_{1}, b_{1}\right), \ldots,\left(W_{L}, b_{L}\right)\right),
	\]
	where 
	\[
		W_i := 
		\begin{pmatrix}
			W_{i}^{1} & & \\
			& W_{i}^{2} & & \\
			&  & \ddots&\\
			& & & W_{i}^{k}
		\end{pmatrix}, \quad
		b_i := 
		\begin{pmatrix}
			b_{i}^{1} \\
			b_{i}^{2} \\
			\vdots \\
			b_{i}^{k}
		\end{pmatrix}, \quad 
		i = 1, \ldots, L.
	\]
	If $\Phi^{1}, \ldots, \Phi^{k}$ are neural networks that have equal input dimensions but different numbers of layers. Let
	\[
		L:=\max \left\{L\left(\Phi^{1}\right), \ldots, L\left(\Phi^{k}\right)\right\},
	\]
	and we define the extension 
	\[
		E_{L}(\Phi^{i}):=
		\left\{
		\begin{aligned}
			&\Phi^{i}, &&  L(\Phi^{i})=L, \\
			&\Phi_{n^i_{L(\Phi^{i})}, L-L(\Phi^{i})}^{\mathbf{Id}} \odot \Phi^{i}, &&  L(\Phi^{i})<L,
		\end{aligned}\right.
	\]		
	where $n^i_{L(\Phi^{i})}$ is the output dimension of $\Phi^{i}$. We define the parallelization of $\Phi^{1}, \ldots, \Phi^{k}$ with
	\[
		P\left(\Phi^{1}, \ldots, \Phi^{k}\right):=P\left(E_{L}\left(\Phi^{1}\right), \ldots, E_{L}\left(\Phi^{k}\right)\right).
	\]
\end{definition}
The following two lemmas provide the properties of the sparse concatenation and the parallelization of neural networks.
\begin{lemma}\label{lemma: sparse concatenation}\cite[Lemma 5.3]{Kutyniok2022Theoreticala}
	Let $\Phi^{1}, \Phi^{2}$ be ReLU neural networks. The input dimension of $\Phi^{1}$ equals the output dimension of $\Phi^{2}$. Then, for the sparse concatenation $\Phi^{1} \odot \Phi^{2}$ it holds
	\begin{enumerate}
		\item $R\left(\Phi^{1}\right) \left(R\left(\Phi^{2}\right)(\cdot)\right)= R\left(\Phi^{1} \odot \Phi^{2}\right)(\cdot)$,
		\item $L\left(\Phi^{1} \odot \Phi^{2}\right) = L\left(\Phi^{1}\right)+L\left(\Phi^{2}\right)$,
		\item $M\left(\Phi^1 \odot \Phi^2\right) \leq 2 M\left(\Phi^1\right)+ 2 M\left(\Phi^2\right)$.
	\end{enumerate}
\end{lemma}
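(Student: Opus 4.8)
I would treat the three assertions in increasing order of difficulty, the last one being where the real work lies. Throughout I rely only on Definition~\ref{neural networkdefinition} (regular concatenation $\circ$), the identity-network lemma (its realization is $\mathbf{Id}_{\mathbb{R}^n}$, it has two layers, and its two weight matrices have the special sparsity structure recalled below), and the trivial facts $M_1(\Phi^1)\le M(\Phi^1)$, $M_{L(\Phi^2)}(\Phi^2)\le M(\Phi^2)$. A preliminary routine step is to record that $\circ$ is associative and that it realizes composition, i.e. $R(\Phi^a\circ\Phi^b)=R(\Phi^a)\circ R(\Phi^b)$; the latter holds because the last layer of $\Phi^b$ is affine (no ReLU is applied there), so absorbing it into the first layer of $\Phi^a$ and then applying $\sigma$ reproduces exactly $\sigma\bigl(W_1^a(W_{L_b}^b x + b_{L_b}^b)+b_1^a\bigr)$ while leaving every other layer untouched.

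\textbf{Claims 1 and 2.} Claim~1 is then immediate: applying the composition identity twice to $\Phi^1\odot\Phi^2=\Phi^1\circ\Phi_{n,1}^{\mathbf{Id}}\circ\Phi^2$ and using $R(\Phi_{n,1}^{\mathbf{Id}})=\mathbf{Id}_{\mathbb{R}^n}$ gives $R(\Phi^1\odot\Phi^2)=R(\Phi^1)\circ\mathbf{Id}_{\mathbb{R}^n}\circ R(\Phi^2)=R(\Phi^1)\bigl(R(\Phi^2)(\cdot)\bigr)$. For Claim~2, Definition~\ref{neural networkdefinition} gives $L(\Phi^a\circ\Phi^b)=L(\Phi^a)+L(\Phi^b)-1$; using this twice together with $L(\Phi_{n,1}^{\mathbf{Id}})=2$ yields $L(\Phi^1\odot\Phi^2)=L(\Phi^1)+2+L(\Phi^2)-2=L(\Phi^1)+L(\Phi^2)$.

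\textbf{Claim 3 (the technical heart).} Write $\Phi_{n,1}^{\mathbf{Id}}=((V_1,\mathbf{0}),(V_2,\mathbf{0}))$, where $V_1\in\mathbb{R}^{2n\times n}$ stacks $I_n$ over $-I_n$ and $V_2\in\mathbb{R}^{n\times 2n}$ places $I_n$ beside $-I_n$; the point is that \emph{every row of $V_1$ has a single nonzero entry} and \emph{every column of $V_2$ has a single nonzero entry}. Grouping as $\Phi^1\odot\Phi^2=(\Phi^1\circ\Phi_{n,1}^{\mathbf{Id}})\circ\Phi^2$ and expanding all layers via Definition~\ref{neural networkdefinition}, the resulting network is the concatenation, in order, of: (i) layers $1,\dots,L(\Phi^2)-1$ of $\Phi^2$, contributing $M(\Phi^2)-M_{L(\Phi^2)}(\Phi^2)$; (ii) a merged layer with weight $V_1 W^2_{L(\Phi^2)}$ and bias $V_1 b^2_{L(\Phi^2)}$ — since each row of $V_1$ has one $\pm1$ entry, this matrix (resp.\ vector) simply duplicates rows (resp.\ entries) of $W^2_{L(\Phi^2)}$ (resp.\ $b^2_{L(\Phi^2)}$) up to sign, so it has at most $2M_{L(\Phi^2)}(\Phi^2)$ nonzero parameters; (iii) a merged layer with weight $W_1^1 V_2$ and bias $b_1^1$ — since each column of $V_2$ has one $\pm1$ entry, $W_1^1 V_2$ duplicates columns of $W_1^1$ up to sign, so this layer has at most $2\|W_1^1\|_0+\|b_1^1\|_0\le 2M_1(\Phi^1)$ nonzero parameters; (iv) layers $2,\dots,L(\Phi^1)$ of $\Phi^1$, contributing $M(\Phi^1)-M_1(\Phi^1)$. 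Summing, $M(\Phi^1\odot\Phi^2)\le M(\Phi^2)+M_{L(\Phi^2)}(\Phi^2)+M_1(\Phi^1)+M(\Phi^1)\le 2M(\Phi^1)+2M(\Phi^2)$.

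\textbf{Main obstacle.} The only genuinely delicate point is Claim~3: one must check that the two spots where the identity network's weight matrices get absorbed into a neighbouring layer of $\Phi^1$ or $\Phi^2$ do not produce a dense block. This is exactly why the detour through $\Phi_{n,1}^{\mathbf{Id}}$ is taken rather than the plain concatenation $\circ$ (for which no bound linear in $M(\Phi^1),M(\Phi^2)$ can hold), and it is where the one-nonzero-per-row / one-nonzero-per-column structure of $V_1$ and $V_2$ is essential. As an alternative, step (iii) above can instead be obtained by invoking the second part of Lemma~\ref{lemma: M of concatenation} for the single-affine-layer absorption, but the direct count is self-contained.
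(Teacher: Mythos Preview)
Your proof is correct. The paper itself does not supply a proof of this lemma; it merely quotes the result from \cite[Lemma~5.3]{Kutyniok2022Theoreticala}. Your argument follows the standard route of that reference: Claims~1 and~2 are obtained by unwinding the definitions of $\circ$ and of the two-layer identity network, and for Claim~3 you exploit the one-nonzero-per-row (resp.\ per-column) structure of $V_1$ (resp.\ $V_2$) to bound the two merged layers by $2M_{L(\Phi^2)}(\Phi^2)$ and $2M_1(\Phi^1)$, yielding the sharper inequality $M(\Phi^1\odot\Phi^2)\le M(\Phi^1)+M_1(\Phi^1)+M(\Phi^2)+M_{L(\Phi^2)}(\Phi^2)$ from which the stated bound follows.
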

\begin{lemma}\label{lemma: parallelization}\cite[Lemma 5.4]{Kutyniok2022Theoreticala}
	Let $\Phi^{1}, \ldots, \Phi^{k}$ be ReLU neural networks with equal input dimension $n$. Then, for the parallelization $P\left(\Phi^{1}, \ldots, \Phi^{k}\right)$ it holds
	\begin{enumerate}
		\item for all $x_{1}, \ldots x_{k} \in \mathbb{R}^{n}$, 
		\[
			R\left(P\left(\Phi^{1}, \ldots, \Phi^{k}\right)\right)\left(x_{1}, \ldots, x_{k}\right) = \left(R\left(\Phi^{1}\right)\left(x_{1}\right), \ldots, R\left(\Phi^{k}\right)\left(x_{k}\right)\right),
		\]
		\item $L\left(P\left(\Phi^{1}, \ldots, \Phi^{k}\right)\right) = \max \{L\left(\Phi^{1}\right), \ldots, L\left(\Phi^{k}\right)\}$,
		\item Let $n^i_{L(\Phi^{i})}$ denote the output dimension of $\Phi^{i}$, then 
		\[
			M\left(P\left(\Phi^1, \ldots, \Phi^k\right)\right) \leq 2\left(\sum_{i=1}^k M\left(\Phi^i\right)\right)+4\left(\sum_{i=1}^k n^i_{L(\Phi^{i})}\right) \max _{i=1, \ldots, k} L\left(\Phi^i\right),
		\]
		\item $M\left(P\left(\Phi^{1}, \ldots, \Phi^{k}\right)\right)=\sum_{i=1}^{k} M\left(\Phi^{i}\right)$, if $L\left(\Phi^{1}\right) = \cdots = L\left(\Phi^{k}\right).$
	\end{enumerate}
\end{lemma}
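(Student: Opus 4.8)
The plan is to reduce everything to the case where the $\Phi^i$ all share a common depth, which is the case the block-diagonal construction handles directly, and then to absorb the general case through the extension operator $E_L$.

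\emph{Step 1 (equal depth).} First I would treat $L(\Phi^1)=\cdots=L(\Phi^k)=L$, where $P(\Phi^1,\dots,\Phi^k)=((W_1,b_1),\dots,(W_L,b_L))$ with $W_i=\mathrm{diag}(W_i^1,\dots,W_i^k)$ and $b_i=((b_i^1)^T,\dots,(b_i^k)^T)^T$. Writing the input as a stack $x=((x_1)^T,\dots,(x_k)^T)^T$, the affine map $z\mapsto W_iz+b_i$ preserves this block splitting, and since $\sigma$ acts entrywise so does the full layer map; an induction on the layer index then shows that the $i$-th hidden state of $P$ on input $x$ is the stack of the $i$-th hidden states of the $\Phi^j$ on inputs $x_j$, which is exactly item~1, while item~2 is trivial. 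For the parameter count, $\|W_i\|_0=\sum_j\|W_i^j\|_0$ and $\|b_i\|_0=\sum_j\|b_i^j\|_0$, so $M_i(P)=\sum_j M_i(\Phi^j)$, and summing over $i$ gives item~4.

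\emph{Step 2 (realization and depth, general case).} Next I would use $P(\Phi^1,\dots,\Phi^k)=P(E_L(\Phi^1),\dots,E_L(\Phi^k))$ with $L=\max_i L(\Phi^i)$ and check two facts. (i) Each $E_L(\Phi^i)$ has exactly $L$ layers: immediate when $L(\Phi^i)=L$, and when $L(\Phi^i)<L$ it follows from $E_L(\Phi^i)=\Phi^{\mathbf{Id}}_{n^i_{L(\Phi^i)},\,L-L(\Phi^i)}\odot\Phi^i$ using the additive depth formula of Lemma~\ref{lemma: sparse concatenation}(2) together with the layer count of the identity network. (ii) $R(E_L(\Phi^i))=R(\Phi^i)$: immediate when $L(\Phi^i)=L$, and otherwise it follows from Lemma~\ref{lemma: sparse concatenation}(1) and $R(\Phi^{\mathbf{Id}}_{n,m})=\mathbf{Id}_{\mathbb{R}^n}$. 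Since all the $E_L(\Phi^i)$ then have the common depth $L$ and the common input dimension $n$, Step~1 applied to them yields items~1 and 2 in general and also $M(P(\Phi^1,\dots,\Phi^k))=\sum_i M(E_L(\Phi^i))$.

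\emph{Step 3 (parameter bound, general case).} Finally, for item~3 I would bound each summand. For indices with $L(\Phi^i)=L$ one has $M(E_L(\Phi^i))=M(\Phi^i)$; for indices with $L(\Phi^i)<L$, Lemma~\ref{lemma: sparse concatenation}(3) and $M(\Phi^{\mathbf{Id}}_{n,m})\le 2nm$ give $M(E_L(\Phi^i))\le 2M(\Phi^i)+2M(\Phi^{\mathbf{Id}}_{n^i_{L(\Phi^i)},\,L-L(\Phi^i)})\le 2M(\Phi^i)+4\,n^i_{L(\Phi^i)}(L-L(\Phi^i))$. In either case $M(E_L(\Phi^i))\le 2M(\Phi^i)+4\,n^i_{L(\Phi^i)}\,L$, and summing over $i$ with $L=\max_i L(\Phi^i)$ produces item~3. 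I expect the only genuine obstacle to be the bookkeeping in Step~2(i) — confirming that $E_L(\Phi^i)$ lands on exactly $L$ layers, which hinges on the precise layer count of $\Phi^{\mathbf{Id}}_{n,\cdot}$ and the additive depth formula for sparse concatenation — since every other step is the routine observation that parallelization is block-diagonal and ReLU acts entrywise once all the depths agree.
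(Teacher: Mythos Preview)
Your proposal is correct and follows the natural route: handle the equal-depth case directly via the block-diagonal structure (which immediately gives items~1, 2, and 4), then reduce the general case by padding each $\Phi^i$ to depth $L=\max_i L(\Phi^i)$ via $E_L$, and finally bound $M(E_L(\Phi^i))$ using Lemma~\ref{lemma: sparse concatenation}(3) together with $M(\Phi^{\mathbf{Id}}_{n,m})\le 2nm$. The one bookkeeping point you flag --- that $E_L(\Phi^i)$ has exactly $L$ layers --- is indeed the only place requiring care, and it goes through because the identity network $\Phi^{\mathbf{Id}}_{n,m}$ is constructed to have exactly $m$ layers and sparse concatenation adds depths.

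Note, however, that the paper does not supply its own proof of this lemma: it is quoted verbatim from \cite[Lemma~5.4]{Kutyniok2022Theoreticala} and used as a black box. Your argument is precisely the standard one given in that reference, so there is nothing further to compare.
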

\subsection{Neural Network Approximation of the Parametric Map}

To approximate the parametric map $\mu \mapsto \mathbf{c}_\mu$ using deep ReLU neural networks, we tackle the problem in two steps: first, approximating the inverse of a matrix, and second, approximating matrix multiplication. Initially, we give the following lemmas for approximating matrix multiplication with ReLU neural networks. 
\begin{lemma}\label{mult_mnk}\cite[Proposition 3.7]{Kutyniok2022Theoreticala}
	Let $m, n, k \in \mathbb{N}, Z>0$, and $\varepsilon \in(0,1)$. There exists a ReLU neural network $\Phi_{\mathrm{mult} ; \varepsilon}^{Z, m, n, k}$ with $n(m+k)$-dimensional input, $mk$-dimensional output such that, for a universal constant $C_{\mathrm{mult}}>0$, we have
	\begin{enumerate}
		\item $\displaystyle \sup_{\substack{A \in \mathbb{R}^{m \times n}, B \in\mathbb{R}^{n \times k}, \\ \|A\|_2 \leq Z, \|B\|_2 \leq Z}}\left\|AB-\mathrm{matr}\left(R\left(\Phi_{\mathrm{mult}; \varepsilon}^{Z, m, n, k}\right)(\mathrm{vec}(A), \mathrm{vec}(B))\right)\right\|_2 \leq \varepsilon$,
		\item $L\left(\Phi_{\mathrm{mult}; \varepsilon}^{Z, m, n, k}\right) \leq C_{\mathrm{mult}} \cdot\left(\log_2(1 / \varepsilon)+\log_2(n \sqrt{mk})+\log_2(\max\{1, Z\})\right)$,
		\item $M\left(\Phi_{\mathrm{mult}; \varepsilon}^{Z, m, n, k}\right) \leq C_{\mathrm{mult}}m n k \cdot\left(\log_2(1 / \varepsilon)+\log_2(n \sqrt{mk})+\log_2(\max\{1, Z\})\right)$.
	\end{enumerate}
\end{lemma}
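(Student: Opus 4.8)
The plan is to build the network in three stages --- an affine input-routing map, a large parallel block of approximate scalar multipliers, and an affine output-summation map --- the only genuinely new ingredient being an approximate scalar multiplier, which I would in turn build from an approximate squaring network. Since $(AB)_{i,j}=\sum_{\ell=1}^{n}A_{i,\ell}B_{\ell,j}$, the $mk$ entries of $AB$ are produced from the $mnk$ scalar products $A_{i,\ell}B_{\ell,j}$ by one fixed $0/1$-weighted sum. First I would recall the standard Yarotsky-type ReLU network $\Phi^{\mathrm{sq}}_{\eta}$ that approximates $t\mapsto t^{2}$ on $[-1,1]$ with sup-error at most $\eta$ using $O(\log_{2}(1/\eta))$ layers and $O(\log_{2}(1/\eta))$ nonzero parameters (a construction available in \cite{Kutyniok2022Theoreticala}). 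Writing $\bar Z:=\max\{1,Z\}$ and using the polarization identity $xy=\bar Z^{2}\big((\tfrac{x+y}{2\bar Z})^{2}-(\tfrac{x-y}{2\bar Z})^{2}\big)$, a scalar multiplier $\Phi^{\times}_{\delta}$ with two-dimensional input, one-dimensional output and sup-error at most $\delta$ on $[-\bar Z,\bar Z]^{2}$ can be taken to be the concatenation $\big((W_{\mathrm{post}},0)\big)\circ P\big(\Phi^{\mathrm{sq}}_{\eta},\Phi^{\mathrm{sq}}_{\eta}\big)\circ\big((W_{\mathrm{pre}},0)\big)$, where $W_{\mathrm{pre}}$ sends $(x,y)$ to the two rescaled combinations that lie in $[-1,1]$, $W_{\mathrm{post}}=(\bar Z^{2},-\bar Z^{2})$, and $\eta=\Theta(\delta/\bar Z^{2})$; the hypotheses $\|A\|_{2},\|B\|_{2}\le Z$ force $|A_{i,\ell}|,|B_{\ell,j}|\le Z\le\bar Z$, so the relevant arguments indeed lie in $[-\bar Z,\bar Z]^{2}$. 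Composing with the constant-size maps $W_{\mathrm{pre}},W_{\mathrm{post}}$ changes the parameter count only by $O(1)$ (and $W_{\mathrm{post}}\in\mathbb{R}^{1\times 2}$ falls under \autoref{lemma: M of concatenation}), so $L(\Phi^{\times}_{\delta})=O(\log_{2}(1/\delta)+\log_{2}\bar Z)$ and $M(\Phi^{\times}_{\delta})=O(\log_{2}(1/\delta)+\log_{2}\bar Z)$.

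Next I would fix the per-product tolerance $\delta:=\varepsilon/(n\sqrt{mk})$. Since $\|\cdot\|_{2}\le\|\cdot\|_{F}$, replacing each scalar product by its $\delta$-accurate surrogate makes every entry of $AB$ off by at most $n\delta$, hence the spectral-norm error of the whole output is at most $\sqrt{mk}\,n\delta=\varepsilon$, which is item (1). To realize the summation structure, for each pair $(i,j)$ I set $\Phi_{i,j}:=\big(((1,\dots,1),0)\big)\circ P\big(\Phi^{\times}_{\delta},\dots,\Phi^{\times}_{\delta}\big)$ with $n$ parallel copies of $\Phi^{\times}_{\delta}$ and $(1,\dots,1)\in\mathbb{R}^{1\times n}$; the first part of \autoref{lemma: M of concatenation} together with \autoref{lemma: parallelization}(4) (the copies have equal depth) yields $M(\Phi_{i,j})\le n\,M(\Phi^{\times}_{\delta})$ and $L(\Phi_{i,j})=L(\Phi^{\times}_{\delta})$. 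Then I set $\Phi^{Z,m,n,k}_{\mathrm{mult};\varepsilon}:=P\big(\Phi_{1,1},\dots,\Phi_{m,k}\big)\circ\big((W_{\mathrm{in}},\mathbf{0})\big)$, ordering the $\Phi_{i,j}$ to match $\mathrm{vec}$, where $W_{\mathrm{in}}$ is the $0/1$ selection matrix that copies the required coordinates of $(\mathrm{vec}(A),\mathrm{vec}(B))\in\mathbb{R}^{n(m+k)}$ into the $2mnk$ inputs read by the parallel block. Each row of $W_{\mathrm{in}}$ has exactly one nonzero entry, so the second part of \autoref{lemma: M of concatenation} shows this pre-composition changes neither the depth nor $M$, while \autoref{lemma: parallelization}(2),(4) give $L=L(\Phi^{\times}_{\delta})$ and $M\le\sum_{i,j}M(\Phi_{i,j})\le mnk\,M(\Phi^{\times}_{\delta})$. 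Substituting $\delta=\varepsilon/(n\sqrt{mk})$ turns $\log_{2}(1/\delta)$ into $\log_{2}(1/\varepsilon)+\log_{2}(n\sqrt{mk})$, which gives items (2) and (3) with $C_{\mathrm{mult}}$ absorbing the universal constants.

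The routine parts here are the polarization-plus-rescaling error estimate and the parameter count of the affine routing maps. The step that needs the most care is structural: every concatenation appearing in the assembly must stay inside one of the two benign regimes of \autoref{lemma: M of concatenation} --- a single-output linear readout on the left (used for the inner sums and for the $\bar Z^{2}$-combination inside $\Phi^{\times}_{\delta}$) and a selection matrix with at most one nonzero per row on the right (used for input routing) --- because the generic bound for $M(\Phi^{1}\circ\Phi^{2})$ is not linear in $M(\Phi^{1})$ and $M(\Phi^{2})$, and a careless assembly would inflate the parameter count by spurious logarithmic factors. Similarly, keeping all parallelized subnetworks of equal depth is what lets \autoref{lemma: parallelization}(4) apply, avoiding the $\big(\sum_{i}n^{i}_{L(\Phi^{i})}\big)\max_{i}L(\Phi^{i})$ overhead of \autoref{lemma: parallelization}(3). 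Together, these structural choices are precisely what keeps $M$ at order $mnk\log_{2}(1/\varepsilon)$ rather than a bound carrying an extra $\log_{2}(\log_{2}(1/\varepsilon))$ factor.
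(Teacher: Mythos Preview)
The paper does not prove this lemma; it is quoted directly from \cite[Proposition~3.7]{Kutyniok2022Theoreticala} without argument. Your construction is correct and is precisely the standard one from that reference: Yarotsky's squaring network plus polarization and rescaling for the scalar multiplier, $mnk$ equal-depth parallel copies with $0/1$ input routing and row-vector output summation (so that \autoref{lemma: M of concatenation} and \autoref{lemma: parallelization}(4) control the counts), and the per-product tolerance $\delta=\varepsilon/(n\sqrt{mk})$ combined with $\|\cdot\|_{2}\le\|\cdot\|_{F}$ for the error bound.
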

\begin{lemma}\label{power}\cite[Proposition A.3]{Kutyniok2022Theoreticala}
	Let $n,i \in \mathbb{N}, Z>0, \varepsilon \in(0,1/4)$ and $\delta \in (0,1)$. There exists a ReLU neural network $\Phi_{2^i ; \varepsilon}^{1-\delta, n}$ with $n^2$-dimensional input, $n^2$-dimensional output such that, for a universal constant $C_{\mathrm{sq}}>0$, we have
	\begin{enumerate}
		\item $\displaystyle \sup_{\substack{A \in \mathbb{R}^{n \times n}, \|A\|_2 \leq 1-\delta}}\left\|A^{2^{i}}-\mathrm{matr}\left(R\left(\Phi_{2^i ; \varepsilon}^{1-\delta, n}\right)(\mathrm{vec}(A))\right)\right\|_2 \leq \varepsilon$,
		\item $L\left(\Phi_{2^i ; \varepsilon}^{1-\delta, n}\right) \leq C_{\mathrm{sq}}i \cdot\left(\log_2(1 / \varepsilon)+\log_2 n+i\right)$,
		\item $M\left(\Phi_{2^i ; \varepsilon}^{1-\delta, n}\right) \leq C_{\mathrm{sq}}in^3 \cdot\left(\log_2(1 / \varepsilon)+\log_2 n + i\right)$.
	\end{enumerate}
\end{lemma}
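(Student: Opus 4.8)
The plan is to realize $A^{2^i}$ as the $i$-fold iterate of the matrix squaring map $X\mapsto X^2$: I would build a small ReLU network $\mathrm{Sq}_\eta$ that approximates one squaring to accuracy $\eta$, and take $\Phi_{2^i;\varepsilon}^{1-\delta,n}$ to be the sparse concatenation $\mathrm{Sq}_\eta\odot\cdots\odot\mathrm{Sq}_\eta$ of $i$ copies. The role of the hypothesis $\|A\|_2\le 1-\delta<1$ is that the exact iterates satisfy $\|A^{2^{j}}\|_2\le(1-\delta)^{2^{j}}\le 1$, so squaring is contraction-like near $0$ and the errors introduced at each step do not grow out of control. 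The two points needing care are: (i) choosing the per-step tolerance $\eta$ sharply enough, and (ii) composing the $i$ blocks without inflating the parameter count — and the second is the main obstacle. (The parameter $Z$ listed among the hypotheses does not enter.)

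\emph{The squaring block.} Let $D:=\binom{I_{n^2}}{I_{n^2}}\in\mathbb{R}^{2n^2\times n^2}$ be the linear map $\mathrm{vec}(X)\mapsto(\mathrm{vec}(X),\mathrm{vec}(X))$, which has exactly one non-zero entry in each row. Fix a universal $Z'>1$ (e.g.\ $Z'=5/4$) and, for a tolerance $\eta\in(0,1)$ to be pinned down below, set $\mathrm{Sq}_\eta:=\Phi_{\mathrm{mult};\eta}^{Z',n,n,n}\circ\big((D,\mathbf{0})\big)$, so that $R(\mathrm{Sq}_\eta)(\mathrm{vec}(X))=R\big(\Phi_{\mathrm{mult};\eta}^{Z',n,n,n}\big)(\mathrm{vec}(X),\mathrm{vec}(X))$. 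By Lemma~\ref{mult_mnk} (with $m=k=n$, hence $n\sqrt{mk}=n^{2}$) this approximates $X\mapsto X^2$ with error $\le\eta$ on $\{\|X\|_2\le Z'\}$; concatenating with the one-layer network $((D,\mathbf{0}))$ does not change the depth, and since $D$ has one non-zero per row, Lemma~\ref{lemma: M of concatenation} gives $M(\mathrm{Sq}_\eta)\le M\big(\Phi_{\mathrm{mult};\eta}^{Z',n,n,n}\big)$. Absorbing $\log_2\max\{1,Z'\}\le 1$, Lemma~\ref{mult_mnk} then yields
\[
L(\mathrm{Sq}_\eta)\le C_{\mathrm{mult}}\big(\log_2(1/\eta)+2\log_2 n+1\big),\qquad M(\mathrm{Sq}_\eta)\le C_{\mathrm{mult}}\,n^{3}\big(\log_2(1/\eta)+2\log_2 n+1\big).
\]

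\emph{Error propagation and the choice of $\eta$.} Put $g:=R(\mathrm{Sq}_\eta)$, $\tilde X_j:=\mathrm{matr}\big(g^{(j)}(\mathrm{vec}(A))\big)$ with $\tilde X_0=A$, $X_j:=A^{2^{j}}$ and $e_j:=\|\tilde X_j-X_j\|_2$. Using $\|U^2-V^2\|_2\le(\|U\|_2+\|V\|_2)\|U-V\|_2$ and $\|X_j\|_2\le 1$, one obtains $e_j\le\eta+(\|\tilde X_{j-1}\|_2+\|X_{j-1}\|_2)\,e_{j-1}$ whenever $\|\tilde X_{j-1}\|_2\le Z'$. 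I would prove by induction on $j\le i$ that, provided $\eta\le 2\varepsilon/3^{\,i}$, one has $e_j\le\tfrac{1}{2}\eta(3^{j}-1)\le\varepsilon<\tfrac{1}{4}$ and hence $\|\tilde X_{j-1}\|_2\le\|X_{j-1}\|_2+e_{j-1}\le 1+\tfrac{1}{4}\le Z'$ (base case $e_0=0$; in the step $\|\tilde X_{j-1}\|_2+\|X_{j-1}\|_2\le 9/4<3$, so $e_j\le\eta+3e_{j-1}$ and unrolling gives the claimed bound). Fixing $\eta:=2\varepsilon/3^{\,i}$ then yields $e_i\le\varepsilon$ — which is assertion~1 once the realization is identified below — and $\log_2(1/\eta)=\log_2(1/\varepsilon)+i\log_2 3-1\le\log_2(1/\varepsilon)+2i$, so that $L(\mathrm{Sq}_\eta)$ and $n^{-3}M(\mathrm{Sq}_\eta)$ are both $O\big(\log_2(1/\varepsilon)+\log_2 n+i\big)$.

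\emph{Assembly and the parameter count.} By Lemma~\ref{lemma: sparse concatenation}, iterated over the $i$ factors, $R\big(\Phi_{2^i;\varepsilon}^{1-\delta,n}\big)=g^{(i)}$ (giving assertion~1 from the error bound above) and $L\big(\Phi_{2^i;\varepsilon}^{1-\delta,n}\big)=i\,L(\mathrm{Sq}_\eta)\le C_{\mathrm{sq}}\,i\,\big(\log_2(1/\varepsilon)+\log_2 n+i\big)$, which is assertion~2. For assertion~3 one cannot simply iterate the estimate $M(\Phi^1\odot\Phi^2)\le 2M(\Phi^1)+2M(\Phi^2)$ of Lemma~\ref{lemma: sparse concatenation}, since that compounds to a useless factor $2^{i}$; instead I would unfold $\Phi_{2^i;\varepsilon}^{1-\delta,n}$ as the explicit chain $\mathrm{Sq}_\eta\circ\Phi_{n^2,1}^{\mathbf{Id}}\circ\mathrm{Sq}_\eta\circ\cdots\circ\Phi_{n^2,1}^{\mathbf{Id}}\circ\mathrm{Sq}_\eta$ and count non-zero parameters layer by layer. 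The weight matrices of the interlacing identity blocks of \cite[Lemma~3.3]{Kutyniok2022Theoreticala} have at most one non-zero per row (respectively per column), so each layer created by a merge at a junction is a layer of some $\mathrm{Sq}_\eta$ left- or right-multiplied by such a matrix and therefore has at most twice as many non-zeros; summing over all layers gives $M\big(\Phi_{2^i;\varepsilon}^{1-\delta,n}\big)\le 3\sum_{j=1}^{i}M(\mathrm{Sq}_\eta)+O(i\,n^{2})\le C_{\mathrm{sq}}\,i\,n^{3}\big(\log_2(1/\varepsilon)+\log_2 n+i\big)$, where $M(\mathrm{Sq}_\eta)\ge n^{2}$ absorbs the identity overhead; this is assertion~3, and $i=0$ is trivial via an identity network. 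The crux is precisely this last count: one must argue directly on the composed network and exploit the $O(1)$-sparsity of the rows and columns of the identity blocks, which is what converts the naive exponential dependence on $i$ into a linear one; a secondary, milder point is keeping $\eta=\varepsilon\cdot 2^{-O(i)}$ compatible with the requirement that the approximate iterates never leave $\{\|\cdot\|_2\le Z'\}$, so that $\Phi_{\mathrm{mult}}$ stays within its guaranteed accuracy.
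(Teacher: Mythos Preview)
The paper does not prove this lemma: it is quoted verbatim from \cite[Proposition~A.3]{Kutyniok2022Theoreticala} and used as a black box, so there is no in-paper argument to compare against. Your construction --- iterate a single ReLU ``squaring'' block $i$ times, propagate the per-step error via $e_j\le\eta+3e_{j-1}$, and choose $\eta=2\varepsilon/3^{i}$ --- is exactly the standard route taken in the cited reference and is correct.

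You also correctly isolate the only genuine subtlety: the parameter count. Naively iterating the estimate $M(\Phi^1\odot\Phi^2)\le 2M(\Phi^1)+2M(\Phi^2)$ would give a useless factor $2^{i}$; your fix --- unfold the chain and observe that at each junction only the last layer of one block and the first layer of the next are modified, each by left- or right-multiplication with a $\{0,\pm1\}$-matrix having a single non-zero per row (resp.\ column), hence at most doubling those two layers --- yields the sharp bound $M\le iM(\mathrm{Sq}_\eta)+(i-1)\big(M_1(\mathrm{Sq}_\eta)+M_L(\mathrm{Sq}_\eta)\big)\le 3iM(\mathrm{Sq}_\eta)$, which is precisely what is needed for assertion~3. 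Your side remark that the hypothesis $Z>0$ is unused is also correct; it is a vestigial parameter in the statement.
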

For the inverse operator, we prove the following proposition. By leveraging a different expansion form of the Neumann series, we improve the bound of the number of non-zero parameters, compared with a similar result in \cite[Theorem 3.8]{Kutyniok2022Theoreticala}.
\begin{proposition}\label{prop: matrix inverse}
	For $\varepsilon, \delta \in (0,1)$, we define
	\[
		l = l(\varepsilon, \delta):=\left\lceil\log_2\left(\log_{1-\delta}\left(\frac{1}{2}\delta\varepsilon\right)+1\right)\right\rceil,
	\]
	There exists a universal constant $C_{\mathrm{inv}}>0$ such that for every $n \in \mathbb{N}, \varepsilon \in (0, 1/4)$ and $\delta \in (0, 1)$, there exists a ReLU neural network $\Phi_{\mathrm{inv};\varepsilon}^{1-\delta, n}$ with $n^{2}$-dimensional input and $n^{2}$-dimensional output satisfying the following properties:
	\begin{enumerate}
		\item $\displaystyle \sup_{A \in \mathbb{R}^{n \times n},\|A\|_2\leq 1-\delta}\left\|\left(I_{n}-A\right)^{-1}-\mathrm{matr}\left(R\left(\Phi_{\mathrm{inv};\varepsilon}^{1-\delta, n}\right)(\mathrm{vec}(A))\right)\right\|_{2} \leq \varepsilon,$
		\item $L\left(\Phi_{\mathrm{inv};\varepsilon}^{1-\delta, n}\right) \leq C_{\mathrm{inv}}l\cdot \left(\log_2(1 / \varepsilon)+ \log_2(1/\delta) + \log_2 n + l\right)$,
		\item $M\left(\Phi_{\mathrm{inv};\varepsilon}^{1-\delta, n}\right) \leq C_{\mathrm{inv}} 2^l n^2(n + l)\cdot \left(\log_2(1 / \varepsilon)+ \log_2(1/\delta) + \log_2 n+l\right).$
	\end{enumerate}
\end{proposition}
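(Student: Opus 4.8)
The plan is to approximate the Neumann series $(I_n-A)^{-1}=\sum_{k\ge 0}A^k$ (which converges since $\|A\|_2\le 1-\delta<1$) through its doubling/product form
$$\sum_{k=0}^{2^l-1}A^k \;=\; \prod_{j=0}^{l-1}\bigl(I_n+A^{2^j}\bigr),$$
the "different expansion" alluded to above: it requires only the $l$ dyadic powers $A^{2^0},\dots,A^{2^{l-1}}$ together with $l-1$ matrix multiplications, in contrast to the longer multiply-and-add chain behind \cite[Theorem 3.8]{Kutyniok2022Theoreticala}, and this is exactly what shaves off the extra $\log_2^2\log_2$ factor. First I would fix the truncation length: $l$ is defined precisely so that, whenever $\|A\|_2\le 1-\delta$, one has $(1-\delta)^{2^l-1}\le\tfrac12\delta\varepsilon$, whence
$$\Bigl\|(I_n-A)^{-1}-\textstyle\sum_{k=0}^{2^l-1}A^k\Bigr\|_2\le\frac{\|A\|_2^{2^l}}{1-\|A\|_2}\le\frac{(1-\delta)^{2^l}}{\delta}\le\frac{\varepsilon}{2};$$
it therefore suffices to realise the finite product to accuracy $\varepsilon/2$.

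I would then assemble the network as a sparse concatenation of a \textbf{powering block} and a \textbf{product block}. The powering block takes $\mathrm{vec}(A)$, copies it $l$ times by a sparse linear map, and applies in parallel the squaring networks $\Phi_{2^j;\varepsilon'}^{1-\delta,n}$ of Lemma \ref{power} for $j=1,\dots,l-1$ (and the identity for $j=0$), equalising depths via the extension $E_L$ built into parallelization; since $\|A^{2^j}\|_2\le(1-\delta)^{2^j}\le 1-\delta$ their hypotheses hold, and folding $\mathrm{vec}(I_n)$ into the last-layer biases produces $\mathrm{vec}(\widehat M_j)$ with $\widehat M_j\approx I_n+A^{2^j}$. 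The product block contracts $(\mathrm{vec}(\widehat M_0),\dots,\mathrm{vec}(\widehat M_{l-1}))$ to $\mathrm{vec}\bigl(\prod_j\widehat M_j\bigr)$ via a depth-$\lceil\log_2 l\rceil$ balanced binary tree whose nodes are multiplication networks $\Phi_{\mathrm{mult};\varepsilon''}^{Z,n,n,n}$ of Lemma \ref{mult_mnk}; the key point is that every partial product occurring in the tree is contiguous, $\prod_{j=a}^{b}(I_n+A^{2^j})=\sum_{t=0}^{2^{b-a+1}-1}(A^{2^a})^t$, with spectral norm $\le\sum_{t\ge0}(1-\delta)^t=1/\delta$, so $Z=2/\delta$ is a legitimate uniform choice. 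Applying Lemmas \ref{lemma: sparse concatenation} and \ref{lemma: parallelization}, the deepest squaring network $\Phi_{2^{l-1};\varepsilon'}^{1-\delta,n}$ dominates the depth, giving $L=O\bigl(l(\log_2(1/\varepsilon')+\log_2 n+l)\bigr)$, while the parameter count is dominated by the parallelised squaring block — $O\bigl(\sum_{j<l}M(\Phi_{2^j;\varepsilon'}^{1-\delta,n})+l\,n^2\max_j L(\Phi_{2^j;\varepsilon'}^{1-\delta,n})\bigr)=O\bigl(l^2n^2(n+l)(\log_2(1/\varepsilon')+\log_2 n+l)\bigr)$ by Lemma \ref{lemma: parallelization}(3) — plus $O(l)$ multiplication networks of size $O\bigl(n^3(\log_2(1/\varepsilon'')+\log_2 n+\log_2(1/\delta))\bigr)$ each; using $l^2\le C2^l$ and the choices of $\varepsilon',\varepsilon''$ below, these collapse to exactly the asserted bounds on $L$ and $M$.

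The step I expect to be the main obstacle is the error analysis: propagating the per-network errors $\varepsilon'$ (squarings) and $\varepsilon''$ (each multiplication) through the tree without blow-up in $l$. I would run a telescoping estimate — replacing one exact factor $M_j=I_n+A^{2^j}$ by $\widehat M_j$ perturbs the product by a term of norm $\le\|\widehat M_0\cdots\widehat M_{j-1}\|_2\,\varepsilon'\,\|M_{j+1}\cdots M_{l-1}\|_2$, which the contiguous-product norm bound controls by $O(\varepsilon'/\delta^2)$, and each of the $l-1$ multiplication nodes adds at most $O(\varepsilon''/\delta)$ after being propagated through the remaining factors — together with an induction on tree depth showing every computed partial product stays within norm $2/\delta$, which keeps the estimate self-consistent and $Z=2/\delta$ valid. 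Summing yields a product-realisation error $O\bigl(l(\varepsilon'+\varepsilon'')/\delta^2\bigr)$, so choosing $\varepsilon'=\varepsilon''=c\,\varepsilon\delta^2/l$ for a small absolute constant $c$ forces it below $\varepsilon/2$; combined with the truncation bound this gives total error $\le\varepsilon$, and since $\log_2(1/\varepsilon')=O(\log_2(1/\varepsilon)+\log_2(1/\delta)+l)$ the complexity bounds above are unaffected. It then remains only to read off the three stated inequalities, absorbing the finitely many small-$l$ cases into $C_{\mathrm{inv}}$.
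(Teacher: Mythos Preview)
Your overall strategy --- truncate the Neumann series at $2^l$ terms and realise $\sum_{k=0}^{2^l-1}A^k=\prod_{j=0}^{l-1}(I_n+A^{2^j})$ with the squaring networks of Lemma~\ref{power} and the multiplication networks of Lemma~\ref{mult_mnk} --- is exactly the paper's idea, and the truncation estimate is correct. The architecture you choose, however, differs in a way that matters. You compute all factors $I_n+A^{2^j}$ in parallel and then contract them through a balanced binary tree of multiplications; the paper instead builds the product \emph{sequentially}, via $\Pi^i=\Phi_{\mathrm{mult}}\odot P(\Pi^{i-1},\,\Phi^1\odot\Phi_{2^{i-1}})$, so that at every step the running partial sum (norm $\le 1/\delta$) is multiplied by a \emph{single fresh factor} $I_n+A^{2^{i-1}}$ of norm at most $2$. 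That asymmetry is precisely why the paper's error amplifies only by a fixed constant (they use $5$) per step, so the per-step accuracies can be taken of size $5^{i-l}\varepsilon\delta$, whose logarithm is $O(\log_2(1/\varepsilon)+\log_2(1/\delta)+l)$, matching the stated bounds.

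Your balanced tree loses this asymmetry: from the second level upward, \emph{both} children of every node are contiguous partial products with norm up to $1/\delta$, so the error recursion becomes $\eta_v\le (2/\delta)\eta_L+(1/\delta)\eta_R+\varepsilon''$, i.e.\ amplification $\Theta(1/\delta)$ per tree level and $(C/\delta)^{\log_2 l}$ overall. Your assertion that each multiplication error contributes only $O(\varepsilon''/\delta)$ at the root presupposes that the product of the \emph{computed} sibling values along the path to the root has norm $O(1/\delta)$; but that product is not itself a single computed partial product, and the induction ``every $\|\widetilde P_v\|\le 2/\delta$'' does not close --- at the inductive step one only gets $\|\widetilde P_L\|\|\widetilde P_R\|+\varepsilon''\le(2/\delta)^2+\varepsilon''$. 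Tracking $\|\widetilde P_v-P_v\|$ instead shows the error genuinely grows by $\Theta(1/\delta)$ per level, forcing $\varepsilon',\varepsilon''\sim\varepsilon(\delta/C)^{\log_2 l}$ and hence an extra term $\log_2 l\cdot\log_2(1/\delta)$ in $\log_2(1/\varepsilon')$. In the regime $\delta\to 0$ with $\varepsilon,n$ fixed one has $l\asymp\log_2(1/\delta)$, so this extra term is $\asymp l\log_2 l$, which no universal $C_{\mathrm{inv}}$ can absorb into $\log_2(1/\varepsilon)+\log_2(1/\delta)+\log_2 n+l$. The easy repair is to replace the balanced tree by a left-fold so that one multiplicand at every stage has norm $\le 2$ --- which is essentially the paper's construction.
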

\begin{proof}
Based on the properties of the partial sums of the Neumann series, for every $\varepsilon, \delta \in(0,1)$ and $A \in \mathbb{R}^{n \times n}$ satisfying $\|A\|_2 \leq 1-\delta$, we have 
\[
	\left\|\left(I_n-A\right)^{-1}-\sum_{i=0}^{2^l-1} A^i\right\|_2 = \left\|\left(I_n-A\right)^{-1} A^{2^l}\right\|_2 \leq \frac{1}{1-(1-\delta)} \cdot(1-\delta)^{2^l} \leq \frac{\varepsilon}{2}.
\]
Thus, it suffices to construct a ReLU neural network representing the partial sums. Note that
\[
	\sum_{i=0}^{2^l-1} A^i=\prod_{i=0}^{l-1}\left(A^{2^i}+I_n\right).
\]
Let $Z_1:= 1-\delta, Z_2:= 3 + \varepsilon + 1/\delta$. For $i = 1, \ldots, l$, define $\Phi^1:= \left(\left(I_{n^2}, \mathrm{vec}(I_{n^2})\right)\right)$, 
\[
	\Pi^i:= 
	\begin{cases}
		\Phi^1, &  i=1, \\ 
		\Phi_{\mathrm{mult}; 5^{i-l-1}\varepsilon/2}^{Z_2, n, n, n} \odot P\left(\Pi^{i-1}, \Phi^1 \odot \Phi_{2^{i-1}; 5^{i-l-1}\delta\varepsilon/2}^{Z_1, n}\right), &  i \geq 2,
	\end{cases}
\]
and 
\[
	\Phi^i:=\Pi^i \circ \left(\left(
	\begin{pmatrix}
		I_{n^2} \\
		\vdots \\
		I_{n^2}
	\end{pmatrix}
	, \mathbf{0}\right)\right),
\]
where $I_{n^2}$ repeats $i$ times. We claim that, for $i = 1, \ldots, l$, we have
\[
	\left\|\mathrm{matr}\left(R\left(\Phi^i\right)(\mathrm{vec}(A))\right)-\sum_{k=0}^{2^i-1} A^k\right\|_2 \leq \frac{\varepsilon}{2\cdot 5^{l-i}}.
\]
We prove this by induction. The case $i = 1$ is trivial. If the claim holds for $i' = 1, \ldots, i-1$, then 
\[
	\begin{aligned}
		&\left\|\mathrm{matr}\left(R\left(\Phi^i\right)(\mathrm{vec}(A))\right)-\sum_{k=0}^{2^i-1} A^k\right\|_2 \\
		\leq{}&  \left\|\sum_{k=0}^{2^i-1} A^k - \left(\mathrm{matr}\left(R\left(\Phi_{2^{i-1}; 5^{i-l-1}\delta\varepsilon/2}^{Z_1, n}\right)(\mathrm{vec}(A))\right) + I_n\right)\prod_{k=0}^{i-2}\left(A^{2^k}+I_n\right)\right\|_2 \\
		&+ \left\|\left(\mathrm{matr}\left(R\left(\Phi_{2^{i-1}; 5^{i-l-1}\delta\varepsilon/2}^{Z_1, n}\right)(\mathrm{vec}(A))\right) + I_n\right)\prod_{k=0}^{i-2}\left(A^{2^k}+I_n\right)\right. \\ 
		&\quad - \left.\left(\mathrm{matr}\left(R\left(\Phi_{2^{i-1}; 5^{i-l-1}\delta\varepsilon/2}^{Z_1, n}\right)(\mathrm{vec}(A))\right) + I_n\right)\mathrm{matr}\left(R\left(\Phi^{i-1}\right)(\mathrm{vec}(A))\right)\right\|_2 \\
		&+ \left\|\left(\mathrm{matr}\left(R\left(\Phi_{2^{i-1}; 5^{i-l-1}\delta\varepsilon/2}^{Z_1, n}\right)(\mathrm{vec}(A))\right) + I_n\right)\mathrm{matr}\left(R\left(\Phi^{i-1}\right)(\mathrm{vec}(A))\right)\right. \\
		&\quad - \left.\mathrm{matr}\left(R\left(\Phi^i\right)(\mathrm{vec}(A))\right)\right\|_2 \\
		:={}& \mathbf{I} + \mathbf{II} + \mathbf{III}
	\end{aligned}
\]
For term $\mathbf{I}$, by \autoref{power} we have 
\[
	\mathbf{I} \leq \left\|\sum_{k=0}^{2^{i-1}-1}A^k\right\|_2 \cdot 5^{i-l-1}\delta\varepsilon/2 \leq 5^{i-l-1}\varepsilon/2.
\]
For term $\mathbf{II}$, by induction we have 
\[
	\mathbf{II} \leq \left\|\mathrm{matr}\left(R\left(\Phi_{2^{i-1}; 5^{i-l-1}\delta\varepsilon/2}^{Z_1, n}\right)(\mathrm{vec}(A))\right) + I_n\right\|_2  \cdot 5^{i-l-1}\varepsilon/2 \leq 3 \cdot 5^{i-l-1}\varepsilon/2.
\]
For term $\mathbf{III}$, by \autoref{mult_mnk} we have $\mathbf{III} \leq 5^{i-l-1}\varepsilon/2$. Combining $\mathbf{I}, \mathbf{II}$ and $\mathbf{III}$, we have proven the claim. Let $\Phi_{\mathrm{inv};\varepsilon}^{1-\delta, n}:= \Phi^l$, then this ReLU neural network satisfies the error estimate of the proposition.

Next, we analyze the size of the resulting neural network $\Phi^l$ by induction. For all $i = 1, \ldots, l$, by \autoref{lemma: sparse concatenation}, \autoref{lemma: parallelization}, \autoref{mult_mnk} and \autoref{power}, we have 
\[
	\begin{aligned}
		&L(\Phi^i) = L(\Pi^i) \\
		\leq{}& L(\Phi_{\mathrm{mult}; 5^{i-l-1}\varepsilon/2}^{Z_2, n, n, n}) + \max\{L(\Pi^{i-1}), L(\Phi_{2^{i-1}; 5^{i-l-1}\delta\varepsilon/2}^{Z_1, n}) + 1\} \\
		\leq{}&C_{\mathrm{mult}} \cdot \left(\log_2(1 / \varepsilon)+ l + \log_2 n+\log_2(\max\{1, Z_2\})\right) \\
		&+ \max\{C_{\mathrm{sq}}(i-1) \cdot\left(\log_2(1 / \varepsilon)+ \log_2(1/\delta) + \log_2 n+l\right), L(\Pi^{i-1})\}.
	\end{aligned}
\]
Let $C_{0}:= 2C_{\mathrm{mult}} + C_{\mathrm{sq}}$. Note that $L(\Pi^{1}) \leq C_{0} \cdot\left(\log_2(1 / \varepsilon)+ \log_2(1/\delta) + \log_2 n+l\right)$. By induction, we have 
\[
	L(\Phi^i) \leq C_{0}i\cdot \left(\log_2(1 / \varepsilon)+ \log_2(1/\delta) + \log_2 n+l\right).
\]
By \autoref{lemma: M of concatenation}, \autoref{lemma: sparse concatenation}, \autoref{lemma: parallelization}, \autoref{mult_mnk} and \autoref{power}, we have 
\[
	\begin{aligned}
		&M(\Phi^i) \leq M(\Pi^i) \\
		\leq{}& 2M(\Phi_{\mathrm{mult}; 5^{i-l-1}\varepsilon/2}^{Z_2, n, n, n}) + 2M(\Pi^{i-1}) + 4M(\Phi^1) + 4M(\Phi_{2^{i-1}; 5^{i-l-1}\delta\varepsilon/2}^{Z_1, n}) \\
		&+ 8n^2\max\{L(\Pi^{i-1}), L(\Phi_{2^{i-1}; 5^{i-l-1}\delta\varepsilon/2}^{Z_1, n}) + 1\}\\
		\leq{}& 2M(\Pi^{i-1}) + 2C_{\mathrm{mult}}n^3\cdot\left(\log_2(1 / \varepsilon)+ l + \log_2 n +\log_2(\max\{1, Z_2\})\right) \\
		&+4C_{\mathrm{sq}}(i-1)n^3 \cdot\left(\log_2(1 / \varepsilon)+\log_2(1 / \delta) + \log_2 n + l\right) \\
		&+8C_{0}(i-1)n^2\cdot \left(\log_2(1 / \varepsilon)+ \log_2(1/\delta) + \log_2 n+l\right).
	\end{aligned}
\]
Let $C_1:= 4C_{\mathrm{sq}} + 8C_0$. By induction, we have 
\[
	\begin{aligned}
		M(\Phi^i) \leq{}& 2^{i}n^2 + 2^i C_{\mathrm{mult}}n^3\cdot\left(\log_2(1 / \varepsilon)+ l + \log_2 n + \log_2 (1/\delta)\right)\\
		&+ 2^i i C_1 n^2\cdot \left(\log_2(1 / \varepsilon)+ \log_2(1/\delta) + \log_2 n+l\right).
	\end{aligned}
\]
With a suitably chosen $C_{\mathrm{inv}} > 0$, we have completed the proof.
\end{proof}

Recall that the coefficient vector under the reduced basis is calculated by
\[
	\mathbf{c}_{\mu} = \left(B_{\mu}^T B_{\mu}\right)^{-1} B_{\mu}^T
	\begin{bmatrix}
        \mathbf{f}_\mu \\
        \mathbf{g}_\mu
    \end{bmatrix},
\]
where $B_{\mu} \in \mathbb{R}^{N \times d}$ is defined by \eqref{B_mu}. We now approximate the map $\mu \mapsto \mathbf{c}_{\mu}$ using ReLU neural networks. We have already shown that ReLU neural networks can approximate the inverse and multiplication operators. It remains to assume that the maps $\mu \mapsto (\mathbf{f}^T_\mu, \mathbf{g}^T_\mu)^T, \mu \mapsto B_{\mu}$ can also be approximated by ReLU neural networks, which is exactly fulfilled by \autoref{assumption: mu to B} and \autoref{assumption: mu to fg}. \autoref{assumption: spectrum of BTB} is related to a scaling factor. By \autoref{prop: matrix inverse}, we can apply $\Phi_{\mathrm{inv};\varepsilon}^{1-\delta, d}$ on $\mathrm{vec}(I_{d} - B_{\mu}^T B_{\mu})$ to approximate $(B_{\mu}^T B_{\mu})^{-1}$ if $\|I_{d} - B_{\mu}^T B_{\mu}\|_2 \leq 1-\delta$. To guarantee that, we introduce a scaling factor $\lambda>0$ and ensure $\|I_{d} - \lambda B_{\mu}^T B_{\mu}\|_2 \leq 1-\delta$. Under \autoref{assumption: spectrum of BTB}, we can select $\lambda:=\left(\alpha^2 + \beta^2\right)^{-1}$ and $\delta:=\lambda \beta^2$, then $\|I_{d} - \lambda(B_{\mu}^T B_{\mu})\|_{2} \leq 1-\delta$ for all $\mu \in \mathcal{D}$ and possible reduced basis $V$. We fix values of $\lambda$ and $\delta$ for the remainder of this paper. Now, we present the construction of the neural network to approximate $\mu \mapsto (B_{\mu}^T B_{\mu})^{-1}$.
\begin{proposition}\label{prop: inv BTB}
	Suppose that \autoref{assumption: spectrum of BTB} and \autoref{assumption: mu to B} hold. For any $\varepsilon \in (0,1/4)$ and reduced basis $V \in \mathbb{R}^{N \times d}$, there exists a ReLU neural network $\Phi_{\mathrm{inv};\varepsilon}^{B^TB}$ with $p$-dimensional input and $d^2$-dimensional output that satisfies the following properties.
	\begin{enumerate}
		\item $\displaystyle \sup_{\mu \in \mathcal{D}}\left\|(B_{\mu}^T B_{\mu})^{-1} - \mathrm{matr}\left(R\left(\Phi_{\mathrm{inv};\varepsilon}^{B^TB}\right)(\mu)\right)\right\|_{2} \leq \varepsilon.$
		\item There exists a constant $C^{B}_{L}$ only depending on $\alpha$ and $\beta$, such that 
		\[
			\begin{aligned}
				L(\Phi_{\mathrm{inv};\varepsilon}^{B^TB}) \leq{}& C^{B}_{L} \cdot \log_2(\log_2(1/\varepsilon))\cdot(\log_2(1/\varepsilon) + \log_2 d) \\ 
				& + C^{B}_{L}\log_2 N + L(B; V, \varepsilon_1),
			\end{aligned}
		\]
		\item There exists a constant $C^{B}_{M}$ only depending on $\alpha$ and $\beta$, such that 
		\[
			\begin{aligned}
				&M(\Phi_{\mathrm{inv};\varepsilon}^{B^TB}) \\
				\leq{}& C^{B}_{M}d^2 \log_2(1/\varepsilon) \cdot (\log_2(\log_2(1/\varepsilon)) + d) \cdot(\log_2(1/\varepsilon) + \log_2 d) \\ 
				&+C^{B}_{M} N d^2 \cdot \left(\log_2(1/\varepsilon) + \log_2 N \right) + 8M(B; V, \varepsilon_1).
			\end{aligned}
		\]
	\end{enumerate}
	Here, $\displaystyle \varepsilon_1 := \min\left\{\frac{\varepsilon \beta^4}{24\alpha}, \frac{\beta^2\sqrt{\varepsilon}}{4}, \frac{\beta^2}{12\alpha}, \frac{\beta}{3}\right\}.$
\end{proposition}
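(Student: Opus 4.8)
The plan is to assemble $\Phi_{\mathrm{inv};\varepsilon}^{B^TB}$ from five blocks, applied in input-to-output order: (i) a linear duplication $\mu\mapsto(\mu,\mu)$, realized by the one-layer network $((D,\mathbf 0))$ with $D=\binom{I_p}{I_p}$; (ii) the parallelization $P(\Phi_{V,\varepsilon_1}^{B^T},\Phi_{V,\varepsilon_1}^{B})$ supplied by \autoref{assumption: mu to B}, producing approximations $\mathrm{vec}(\widehat{B_\mu^T})$ and $\mathrm{vec}(\widehat{B_\mu})$; (iii) the multiplication network $\Phi_{\mathrm{mult};\varepsilon_{\mathrm{mult}}}^{Z,n_{\mathrm{POD}},N,n_{\mathrm{POD}}}$ of \autoref{mult_mnk} with a scale $Z$ that is a fixed multiple of $\alpha$, followed — as an ordinary concatenation, which merely rescales and shifts its last layer — by the affine map $v\mapsto\mathrm{vec}(I_{n_{\mathrm{POD}}})-\lambda v$; (iv) the matrix-inverse network $\Phi_{\mathrm{inv};\varepsilon'}^{1-\delta/2,\,n_{\mathrm{POD}}}$ of \autoref{prop: matrix inverse}; and (v) a final scaling $v\mapsto\lambda v$, again merged into the preceding last layer. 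The blocks are joined by sparse concatenation (\autoref{lemma: sparse concatenation}), except the duplication and the two scaling/affine layers, which are joined by ordinary concatenation at negligible cost via \autoref{lemma: M of concatenation}. With $\lambda=(\alpha^2+\beta^2)^{-1}$ and $\delta=\lambda\beta^2$ the fixed values from the text, \autoref{assumption: spectrum of BTB} gives $\Lambda(\lambda B_\mu^TB_\mu)\subset[\delta,1-\delta]$ for all $\mu$ and every admissible $V$, so $A_\mu:=I_{n_{\mathrm{POD}}}-\lambda B_\mu^TB_\mu$ satisfies $\|A_\mu\|_2\le1-\delta$ and $(B_\mu^TB_\mu)^{-1}=\lambda\,(I_{n_{\mathrm{POD}}}-A_\mu)^{-1}$, which is exactly what the composition is designed to produce.

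\textbf{Error propagation.} Writing $\widehat{B_\mu^T},\widehat{B_\mu}$ for the realizations of the two subnetworks in block (ii), \autoref{assumption: mu to B} gives $\|\widehat{B_\mu^T}-B_\mu^T\|_2,\|\widehat{B_\mu}-B_\mu\|_2\le\varepsilon_1$; since $\|B_\mu\|_2=\|B_\mu^TB_\mu\|_2^{1/2}\le\alpha$ and $\varepsilon_1\le\beta/3<\alpha$, both have spectral norm at most $2\alpha$, so $Z:=2\alpha$ is admissible in \autoref{mult_mnk}. The splitting $\widehat{B_\mu^T}\,\widehat{B_\mu}-B_\mu^TB_\mu=(\widehat{B_\mu^T}-B_\mu^T)\widehat{B_\mu}+B_\mu^T(\widehat{B_\mu}-B_\mu)$ bounds the product error by $2\alpha\varepsilon_1+\varepsilon_1^2$, and adding the multiplication error $\varepsilon_{\mathrm{mult}}$ shows the output $\widehat A_\mu$ of block (iii) obeys $\|\widehat A_\mu-A_\mu\|_2\le\lambda(2\alpha\varepsilon_1+\varepsilon_1^2+\varepsilon_{\mathrm{mult}})$. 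I would take $\varepsilon_{\mathrm{mult}}$ to be a suitable constant times $\varepsilon\beta^4$ (truncated below $1$); then the term $\varepsilon_1\le\beta^2/(12\alpha)$ in the definition of $\varepsilon_1$ forces $\|\widehat A_\mu\|_2\le1-\delta/2$, so \autoref{prop: matrix inverse} applies to block (iv), while the terms $\varepsilon_1\le\varepsilon\beta^4/(24\alpha)$ and $\varepsilon_1\le\beta^2\sqrt{\varepsilon}/4$ control, respectively, the $\alpha\varepsilon_1$ and $\varepsilon_1^2$ contributions. The resolvent identity $(I-A_\mu)^{-1}-(I-\widehat A_\mu)^{-1}=(I-A_\mu)^{-1}(A_\mu-\widehat A_\mu)(I-\widehat A_\mu)^{-1}$ together with $\|(I-A_\mu)^{-1}\|_2\le1/\delta$ and $\|(I-\widehat A_\mu)^{-1}\|_2\le2/\delta$ converts the input perturbation into a $\tfrac{2}{\delta^2}$-Lipschitz error on the output of (iv); adding the intrinsic error $\varepsilon'$ of $\Phi_{\mathrm{inv};\varepsilon'}^{1-\delta/2,n_{\mathrm{POD}}}$ (chosen as a constant, depending on $\alpha,\beta$, times $\varepsilon$ and kept below $1/4$) and multiplying by the outer scale $\lambda$, the identity $\lambda^2/\delta^2=\beta^{-4}$ collapses all contributions to $\sup_{\mu\in\mathcal D}\|(B_\mu^TB_\mu)^{-1}-\mathrm{matr}(R(\Phi_{\mathrm{inv};\varepsilon}^{B^TB})(\mu))\|_2\le\varepsilon$.

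\textbf{Complexity bookkeeping.} By \autoref{lemma: sparse concatenation} and \autoref{lemma: parallelization}, the depth is $L(\Phi_{\mathrm{inv};\varepsilon'}^{1-\delta/2,n_{\mathrm{POD}}})+L(\Phi_{\mathrm{mult};\varepsilon_{\mathrm{mult}}}^{Z,n_{\mathrm{POD}},N,n_{\mathrm{POD}}})+L(B;V,\varepsilon_1)$ up to an additive constant, and the number of nonzero weights is at most a universal multiple of $M(\Phi_{\mathrm{inv};\varepsilon'}^{1-\delta/2,n_{\mathrm{POD}}})+M(\Phi_{\mathrm{mult};\varepsilon_{\mathrm{mult}}}^{Z,n_{\mathrm{POD}},N,n_{\mathrm{POD}}})+8\,M(B;V,\varepsilon_1)$ plus lower-order terms of size $O(Nn_{\mathrm{POD}})$ from the duplication, affine, scaling and interface-identity layers; the factor $8$ in front of $M(B;V,\varepsilon_1)$ is the product of the two doublings incurred by nesting the parallelization inside two sparse concatenations, and the equality $L(\Phi_{V,\varepsilon_1}^{B^T})=L(\Phi_{V,\varepsilon_1}^{B})$ from \autoref{assumption: mu to B} lets us use item~4 of \autoref{lemma: parallelization}, so the parallelization costs exactly $2M(B;V,\varepsilon_1)$ with no layer padding. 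Plugging in \autoref{mult_mnk} (with $\sqrt{mk}=n_{\mathrm{POD}}$, so block (iii) yields the $\log_2 N$ and $Nn_{\mathrm{POD}}^2$ terms) and \autoref{prop: matrix inverse}, and noting that for the chosen $\varepsilon'$ (a constant times $\varepsilon$) and the fixed $\delta$ one has $l(\varepsilon',\delta)=\Theta(\log_2\log_2(1/\varepsilon))$ and $2^{l(\varepsilon',\delta)}=\Theta(\log_2(1/\varepsilon))$, gives the stated depth and size bounds with constants $C_L^B,C_M^B$ depending only on $\alpha,\beta$.

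\textbf{Main obstacle.} The delicate step is the error analysis of block (iv): \autoref{prop: matrix inverse} only guarantees accuracy on inputs of spectral norm $\le 1-\delta/2$, so one must verify that the \emph{perturbed} matrix $\widehat A_\mu$ still lies in that region — which is precisely why the inverse network is invoked with the slack radius $1-\delta/2$ rather than the tight $1-\delta$ — and then absorb the Lipschitz amplification of the input error by the possibly large factor $2/\delta^2$. The four-term minimum defining $\varepsilon_1$, together with the choices of $\varepsilon_{\mathrm{mult}}$ and $\varepsilon'$, is exactly what makes $\|\widehat A_\mu\|_2$ and each separate error contribution land below their required thresholds; the remainder is routine accounting with \autoref{lemma: M of concatenation}, \autoref{lemma: sparse concatenation} and \autoref{lemma: parallelization}.
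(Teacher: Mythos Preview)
Your proposal is correct and follows essentially the same route as the paper: build $\mu\mapsto B_\mu^T B_\mu$ from a duplication, the parallelization $P(\Phi_{V,\varepsilon_1}^{B^T},\Phi_{V,\varepsilon_1}^{B})$, and a multiplication network; fold the affine map $v\mapsto\mathrm{vec}(I)-\lambda v$ into the last layer; feed the result to $\Phi_{\mathrm{inv};\varepsilon'}^{1-\delta/2,n_{\mathrm{POD}}}$; and scale by $\lambda$. The paper uses the same resolvent identity for the error term you call $\mathbf{IV}$, the same $1-\delta/2$ slack radius to absorb the input perturbation, and arrives at the same factor $8M(B;V,\varepsilon_1)$ via the two nested sparse concatenations; the only cosmetic differences are that the paper names the intermediate tolerances $\varepsilon_2,\varepsilon_3$ instead of $\varepsilon_{\mathrm{mult}},\varepsilon'$ and takes $Z=\alpha+\varepsilon_1$ rather than your cruder $Z=2\alpha$.
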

\begin{proof}
	Let $\varepsilon_1, \varepsilon_2 > 0$ be determined later. We define
	\[
		\Phi_{\varepsilon_1, \varepsilon_2}^{B^TB} := \Phi_{\mathrm{mult}; \varepsilon_2}^{\alpha + \varepsilon_1, d, N, d} \odot P\left(\Phi_{V, \varepsilon_1}^{B^T}, \Phi_{V, \varepsilon_1}^{B}\right) \circ \left(\left(
		\begin{pmatrix}
			\mathbf{I}_{p} \\
			\mathbf{I}_{p}
		\end{pmatrix}, \mathbf{0} \right)\right).
	\]
	By the triangle inequality, we have 
	\[
		\begin{aligned}
			&\sup_{\mu \in \mathcal{D}}\left\|B_{\mu}^T B_{\mu} - \mathrm{matr}\left(R\left(\Phi_{\varepsilon_1, \varepsilon_2}^{B^TB}\right)(\mu)\right)\right\|_{2} \\
			\leq{}&\sup_{\mu \in \mathcal{D}}\left\|B_{\mu}^T B_{\mu} - B_{\mu}^T \mathrm{matr}\left(R\left(\Phi_{V, \varepsilon_1}^{B}\right)(\mu)\right)\right\|_{2} \\
			&+ \sup_{\mu \in \mathcal{D}}\left\|B_{\mu}^T \mathrm{matr}\left(R\left(\Phi_{V, \varepsilon_1}^{B}\right)(\mu)\right) - \mathrm{matr}\left(R\left(\Phi_{V, \varepsilon_1}^{B^T}\right)(\mu)\right) \mathrm{matr}\left(R\left(\Phi_{V, \varepsilon_1}^{B}\right)(\mu)\right)\right\|_{2} \\
			&+ \sup_{\mu \in \mathcal{D}}\left\| \mathrm{matr}\left(R\left(\Phi_{V, \varepsilon_1}^{B^T}\right)(\mu)\right) \mathrm{matr}\left(R\left(\Phi_{V, \varepsilon_1}^{B}\right)(\mu)\right) - \mathrm{matr}\left(R\left(\Phi_{\varepsilon_1, \varepsilon_2}^{B^TB}\right)(\mu)\right)\right\|_{2} \\
			=:{}& \mathbf{I} + \mathbf{II} + \mathbf{III}.
		\end{aligned}
	\]
	For term $\mathbf{I}$, by \autoref{assumption: spectrum of BTB} and \autoref{assumption: mu to B} we have
	\[
		\mathbf{I} \leq \sup_{\mu \in \mathcal{D}}\|B^T_{\mu}\|_2 \cdot \left\|B_{\mu} - \mathrm{matr}\left(R\left(\Phi_{V, \varepsilon_1}^{B}\right)(\mu)\right)\right\|_{2} \leq \alpha \varepsilon_1.
	\]
	For term $\mathbf{II}$, by \autoref{assumption: mu to B} we have
	\[
		\begin{aligned}
			\mathbf{II} \leq \sup_{\mu \in \mathcal{D}}\left\| B_{\mu}^T  - \mathrm{matr}\left(R\left(\Phi_{V, \varepsilon_1}^{B^T}\right)(\mu)\right) \right\|_{2} \cdot \| \mathrm{matr}\left(R\left(\Phi_{V, \varepsilon_1}^{B}\right)(\mu)\right) \|_2 \leq (\alpha+\varepsilon_1) \varepsilon_1.
		\end{aligned}
	\]
	For term $\mathbf{III}$, note that $\left\|\mathrm{matr}\left(R\left(\Phi_{V, \varepsilon_1}^{B}\right)(\mu)\right)\right\|_2$ and $\displaystyle \left\|\mathrm{matr}\left(R\left(\Phi_{V, \varepsilon_1}^{B^T}\right)(\mu)\right)\right\|_2$ are below $(\alpha + \varepsilon_1)$. Hence, we can use \autoref{mult_mnk}, which derives $\mathbf{III} \leq \varepsilon_2$. Then we can calculate the size of $\Phi_{\varepsilon_1, \varepsilon_2}^{B^TB}$. By \autoref{lemma: sparse concatenation}, \autoref{lemma: parallelization}, \autoref{mult_mnk},  and \autoref{assumption: mu to B}, we have 
	\[
		\begin{aligned}
			&L(\Phi_{\varepsilon_1, \varepsilon_2}^{B^TB}) \\
			\leq{}& L\left(\Phi_{\mathrm{mult}; \varepsilon_2}^{\alpha + \varepsilon_1, d, N, d}\right) + \max\bigg\{L\left(\Phi_{V, \varepsilon_1}^{B^T}\right), L\left(\Phi_{V, \varepsilon_1}^{B}\right)\bigg\} \\
			\leq{}& C_{\mathrm{mult}} \cdot\left(\log_2(1 / \varepsilon_2)+\log_2(Nd)+\log_2(\max\{1, \alpha + \varepsilon_1\})\right) + L(B; V, \varepsilon_1).
		\end{aligned}
	\]
	By \autoref{lemma: M of concatenation}, \autoref{lemma: sparse concatenation}, \autoref{lemma: parallelization}, \autoref{mult_mnk},  and \autoref{assumption: mu to B}, we have 
	\[
		\begin{aligned}
			&M(\Phi_{\varepsilon_1, \varepsilon_2}^{B^TB}) \\
			\leq{}& 2M\left(\Phi_{\mathrm{mult}; \varepsilon_2}^{\alpha + \varepsilon_1, d, N, d}\right) + 4M(B; V, \varepsilon_1) \\
			\leq{}& 2C_{\mathrm{mult}} Nd^2 \cdot \left(\log_2(1 / \varepsilon_2)+\log_2(Nd)+\log_2(\max\{1, \alpha + \varepsilon_1\})\right) \\
			&+ 4M(B; V, \varepsilon_1).
		\end{aligned}
	\]
	Now assume that $\Phi_{\varepsilon_1, \varepsilon_2}^{B^TB} = \left(\left(W_1, b_1\right), \ldots,\left(W_L, b_L\right)\right)$, we construct another ReLU neural network as 
	\[
		\Phi_{\varepsilon_1, \varepsilon_2}^{I - \lambda B^TB} := \left(\left(W_1, b_1\right), \ldots,\left(-\lambda W_L, -\lambda b_L + \mathrm{vec}(I_{d})\right)\right).
	\]
    We have 
	\begin{equation}\label{Term I II III}
		\begin{aligned}
			&\sup_{\mu \in \mathcal{D}}\left\|I_{d} - \lambda B_{\mu}^T B_{\mu} - \mathrm{matr}\left(R\left(\Phi_{\varepsilon_1, \varepsilon_2}^{I - \lambda B^TB}\right)(\mu)\right)\right\|_{2} \\
			={}&\sup_{\mu \in \mathcal{D}}\left\|I_{d} - \lambda B_{\mu}^T B_{\mu} - \left(I_{d} - \lambda\mathrm{matr}\left(R\left(\Phi_{\varepsilon_1, \varepsilon_2}^{B^TB}\right)(\mu)\right)\right)\right\|_{2} \\
			\leq{}& \lambda(\mathbf{I} + \mathbf{II} + \mathbf{III}) \\
			\leq{}& \lambda(2\alpha\varepsilon_1 + \varepsilon_1^2 + \varepsilon_2),
		\end{aligned}
	\end{equation}
    as well as 
	\begin{equation}\label{size of B^TB, I}
		\begin{aligned}
			L(\Phi_{\varepsilon_1, \varepsilon_2}^{I - \lambda B^TB}) &= L(\Phi_{\varepsilon_1, \varepsilon_2}^{B^TB}), \\
			M(\Phi_{\varepsilon_1, \varepsilon_2}^{I - \lambda B^TB}) &\leq M(\Phi_{\varepsilon_1, \varepsilon_2}^{B^TB}) + d.
		\end{aligned}
	\end{equation}

	As the final stage, we define the ReLU neural network
	\[
		\Phi_{\mathrm{inv} ; \varepsilon_1, \varepsilon_2, \varepsilon_3}^{B^TB} := \left(\left(\lambda I_{d}, \mathbf{0} \right)\right) \circ \Phi_{\mathrm{inv} ; \varepsilon_3}^{1 - \delta/2,d} \odot \Phi_{\varepsilon_1, \varepsilon_2}^{I - \lambda B^TB},
	\]
	with $p$-dimensional input and $d^2$-dimensional output. The value $\varepsilon_3 > 0$ is undetermined. To bound the approximation error, we estimate 
	\begin{equation}
		\begin{aligned}
			&\sup_{\mu \in \mathcal{D}}\left\|(B_{\mu}^T B_{\mu})^{-1} - \mathrm{matr}\left(R\left(\Phi_{\mathrm{inv} ; \varepsilon_1, \varepsilon_2, \varepsilon_3}^{B^TB}\right)(\mu)\right)\right\|_{2} \\
			\leq{}&\sup_{\mu \in \mathcal{D}}\left\|(B_{\mu}^T B_{\mu})^{-1} - \lambda \left(I_{d} - \mathrm{matr}\left(R\left(\Phi_{\varepsilon_1, \varepsilon_2}^{I - \lambda B^TB}\right)(\mu)\right)\right)^{-1}\right\|_{2} \\
			&+\sup_{\mu \in \mathcal{D}}\left\|\lambda \left(I_{d} - \mathrm{matr}\left(R\left(\Phi_{\varepsilon_1, \varepsilon_2}^{I - \lambda B^TB}\right)(\mu)\right)\right)^{-1} \right.\\
			&\left. \qquad \qquad - \lambda \mathrm{matr}\left(R\left( \Phi_{\mathrm{inv} ; \varepsilon_3}^{1 - \delta/2,d} \odot \Phi_{\varepsilon_1, \varepsilon_2}^{I - \lambda B^TB} \right)(\mu)\right)\right\|_{2} \\
			=:{}& \mathbf{IV} + \mathbf{V}.
		\end{aligned}
	\end{equation}
	For term $\mathbf{IV}$, using the equation $C^{-1} - D^{-1} = C^{-1}(D - C)D^{-1}$, we write 
	\[
		\begin{aligned}
			\mathbf{IV} &\leq \sup_{\mu \in \mathcal{D}}\left\|(B_{\mu}^T B_{\mu})^{-1}\right\|_{2} \cdot \left\|\lambda^{-1}\left(I_{d} - \mathrm{matr}\left(R\left(\Phi_{\varepsilon_1, \varepsilon_2}^{I - \lambda B^TB}\right)(\mu)\right)\right) - B_{\mu}^T B_{\mu}\right\|_{2}\\
			&\qquad \qquad \cdot \left\|\lambda \left(I_{d} - \mathrm{matr}\left(R\left(\Phi_{\varepsilon_1, \varepsilon_2}^{I - \lambda B^TB}\right)(\mu)\right)\right)^{-1}\right\|_{2}. 
		\end{aligned}
	\]
	\autoref{assumption: spectrum of BTB} and \eqref{Term I II III} yields that 
	\[
		\sup_{\mu \in \mathcal{D}} \left\|\lambda^{-1}\left(I_{d} - \mathrm{matr}\left(R\left(\Phi_{\varepsilon_1, \varepsilon_2}^{I - \lambda B^TB}\right)(\mu)\right)\right) - B_{\mu}^T B_{\mu}\right\|_{2} \leq 2\alpha\varepsilon_1 + \varepsilon_1^2 + \varepsilon_2, 
	\]
	and
	\[
		\begin{aligned}
			&\sup_{\mu \in \mathcal{D}} \left\|\lambda \left(I_{d} - \mathrm{matr}\left(R\left(\Phi_{\varepsilon_1, \varepsilon_2}^{I - \lambda B^TB}\right)(\mu)\right)\right)^{-1}\right\|_{2} \\
			\leq{}& \lambda / \left(\inf_{\mu \in \mathcal{D}}\left\|\lambda B_{\mu}^T B_{\mu}\right\|_{2} - \lambda(2\alpha\varepsilon_1 + \varepsilon_1^2 + \varepsilon_2)\right) \\
			\leq{}& (\beta^2 -(2\alpha\varepsilon_1 + \varepsilon_1^2 + \varepsilon_2))^{-1}.
		\end{aligned}
	\]
	Here, we shall make sure that $\beta^2 -(2\alpha\varepsilon_1 + \varepsilon_1^2 + \varepsilon_2) > 0$ when determine the values of $\varepsilon_1, \varepsilon_2$. Then term $\mathbf{IV}$ can be bounded as 
	\[
		\mathbf{IV} \leq \beta^{-2}(2\alpha\varepsilon_1 + \varepsilon_1^2 + \varepsilon_2)(\beta^2 -(2\alpha\varepsilon_1 + \varepsilon_1^2 + \varepsilon_2))^{-1}.
	\]
	For term $\mathbf{V}$, \eqref{Term I II III} yields that 
	\[
		\begin{aligned}
			\sup_{\mu \in \mathcal{D}}\left\|\mathrm{matr}\left(R\left(\Phi_{\varepsilon_1, \varepsilon_2}^{I - \lambda B^TB} \right)(\mu)\right)\right\|_{2} 
			&\leq \lambda(2\alpha\varepsilon_1 + \varepsilon_1^2 + \varepsilon_2) + \sup_{\mu \in \mathcal{D}}\left\|I_{d} - \lambda B_{\mu}^T B_{\mu}\right\|_{2} \\
			&\leq \lambda(2\alpha\varepsilon_1 + \varepsilon_1^2 + \varepsilon_2) + 1 - \lambda\beta^2.
		\end{aligned}
	\]
	Assume that $\lambda(2\alpha\varepsilon_1 + \varepsilon_1^2 + \varepsilon_2) + 1 - \lambda\beta^2 \leq 1 - \delta/2$, then by \autoref{prop: matrix inverse}, $\mathbf{V} \leq \lambda\varepsilon_3.$ Combining $\mathbf{IV}$ and $\mathbf{V}$ implies
	\begin{equation}\label{Term IV V}
		\begin{aligned}
			&\sup_{\mu \in \mathcal{D}}\left\|(B_{\mu}^T B_{\mu})^{-1} - \mathrm{matr}\left(R\left(\Phi_{\mathrm{inv} ; \varepsilon_1, \varepsilon_2, \varepsilon_3}^{B^TB}\right)(\mu)\right)\right\|_{2} \\
			\leq{}& \mathbf{IV} + \mathbf{V} \leq \beta^{-2}(2\alpha\varepsilon_1 + \varepsilon_1^2 + \varepsilon_2)(\beta^2 -(2\alpha\varepsilon_1 + \varepsilon_1^2 + \varepsilon_2))^{-1} + \lambda\varepsilon_3.
		\end{aligned}
	\end{equation}
	For the size of neural network, by \autoref{lemma: sparse concatenation}, \autoref{prop: matrix inverse} and \eqref{size of B^TB, I}, we have
	\begin{align*}
		&L(\Phi_{\mathrm{inv} ; \varepsilon_1, \varepsilon_2, \varepsilon_3}^{B^TB}) \\ 
		={}& L(\Phi_{\mathrm{inv} ; \varepsilon_3}^{1 - \delta/2,d}) + L(\Phi_{\varepsilon_1, \varepsilon_2}^{B^TB}) \\
		\leq{}& C_{\mathrm{inv}} l(\varepsilon_3,\delta/2)\left(\log_2 (1 / \varepsilon_3)+l(\varepsilon_3,\delta/2)+ \log_2(1/\delta) + \log_2(d)\right) \\
		&+ C_{\mathrm{mult}} \cdot\left(\log_2(1 / \varepsilon_2)+\log_2(Nd)+\log_2(\max\{1, \alpha + \varepsilon_1\})\right) + L(B; V, \varepsilon_1)
	\end{align*}
	and
	\begin{align*}
		&M(\Phi_{\mathrm{inv} ; \varepsilon_1, \varepsilon_2, \varepsilon_3}^{B^TB}) \\ 
		\leq{}& 2M(\Phi_{\mathrm{inv} ; \varepsilon_3}^{1 - \delta/2,d}) + 2M(\Phi_{\varepsilon_1, \varepsilon_2}^{B^TB}) + 2d \\
		\leq{}& 2C_{\mathrm{inv}} 2^{l(\varepsilon_3,\delta/2)} d^2 (d + l(\varepsilon_3,\delta/2)) \\
		& \qquad \qquad \cdot \left(\log_2 (1/\varepsilon_3)+l(\varepsilon_3,\delta/2)+ \log_2(1/\delta)+ \log_2 (d)\right) \\
		&+ 4C_{\mathrm{mult}} Nd^2 \cdot \left(\log_2(1 / \varepsilon_2)+\log_2(Nd)+\log_2(\max\{1, \alpha + \varepsilon_1\})\right)  \\
		&+ 8M(B; V, \varepsilon_1) + 2d.
	\end{align*}
	It remains to determine the values of $\varepsilon_1, \varepsilon_2, \varepsilon_3$. Recall that we should make sure  
	\[
		\beta^2 -(2\alpha\varepsilon_1 + \varepsilon_1^2 + \varepsilon_2) > 0, \quad \lambda(2\alpha\varepsilon_1 + \varepsilon_1^2 + \varepsilon_2) + 1 - \lambda\beta^2 \leq 1 - \delta/2.
	\]
	Besides, the approximation error bound in the conclusion should hold, which can be proved by \eqref{Term IV V} if 
	\[
		\beta^{-2}(2\alpha\varepsilon_1 + \varepsilon_1^2 + \varepsilon_2)(\beta^2 -(2\alpha\varepsilon_1 + \varepsilon_1^2 + \varepsilon_2))^{-1} + \lambda\varepsilon_3 \leq \varepsilon.
	\]
	One can check that the following values are sufficient. 
	\[
		\varepsilon_1 = \min\left\{\frac{\varepsilon \beta^4}{24\alpha}, \frac{\beta^2\sqrt{\varepsilon}}{4}, \frac{\beta^2}{12\alpha}, \frac{\beta}{3}\right\}, \quad \varepsilon_2 = \min\left\{\frac{\beta^4 \varepsilon}{12}, \frac{\beta^2}{6}\right\},  \quad \varepsilon_3 = \frac{\varepsilon}{2\lambda}.
	\]
	Let $\Phi_{\mathrm{inv};\varepsilon}^{B^TB}:=\Phi_{\mathrm{inv} ; \varepsilon_1, \varepsilon_2, \varepsilon_3}^{B^TB}$ and plug in these values yields
	\[	
		\sup_{\mu \in \mathcal{D}}\left\|(B_{\mu}^T B_{\mu})^{-1} - \mathrm{matr}\left(R\left(\Phi_{\mathrm{inv};\varepsilon}^{B^TB}\right)(\mu)\right)\right\|_{2} \leq \frac{\varepsilon}{2} + \frac{\varepsilon}{2} \leq \varepsilon,
	\]
	\[
		L(\Phi_{\mathrm{inv};\varepsilon}^{B^TB}) \leq C^{B}_{L} \cdot \log_2(\log_2(1/\varepsilon))\cdot(\log_2(1/\varepsilon) + \log_2 d) + C^{B}_{L}\log_2 N + L(B; V, \varepsilon_1),
	\]
	\[
		\begin{aligned}
			M(\Phi_{\mathrm{inv};\varepsilon}^{B^TB}) \leq{}& C^{B}_{M}d^2 \log_2(1/\varepsilon) \cdot (\log_2(\log_2(1/\varepsilon)) + d) \cdot(\log_2(1/\varepsilon) + \log_2 d) \\ 
			&+C^{B}_{M} N d^2 \cdot \left(\log_2(1/\varepsilon) + \log_2 N \right) + 8M(B; V, \varepsilon_1),
		\end{aligned}
	\]
	where $C^{B}_{L}$ and $C^{B}_{M}$ are suitably chosen constants only depending on $\alpha$ and $\beta$. This completes the proof. 
\end{proof}
We can now construct ReLU neural networks that approximate the parametric map $\mu \mapsto \mathbf{c}_\mu$ and hence complete the proof of \autoref{main theorem: approximate parametric map}
\begin{proof}[Proof of \autoref{main theorem: approximate parametric map}]
	Let $\varepsilon_1, \varepsilon_2, \varepsilon_3, \varepsilon_4, \varepsilon_5, Z_1, Z_2 > 0$ be determined later. Let $\Phi_{V, \varepsilon_1}^{B^T}$ be the neural network defined in \autoref{assumption: mu to B} and $\Phi_{\varepsilon_2}^{\mathbf{fg}}$ be the neural network defined in \autoref{assumption: mu to fg}. We define 
	\[
		\Phi_{\varepsilon_1, \varepsilon_2, \varepsilon_3}^{B^T\mathbf{fg}} := \Phi_{\mathrm{mult}; \varepsilon_3}^{Z_1, d, N, 1} \odot P\left(\Phi_{V, \varepsilon_1}^{B^T}, \Phi_{\varepsilon_2}^{\mathbf{fg}}\right) \circ \left(\left(
			\begin{pmatrix}
				I_p \\
				I_p
			\end{pmatrix}, 
			\mathbf{0}\right)\right).
	\]
	Let $\Phi_{\mathrm{inv};\varepsilon_4}^{B^TB}$ be the neural network derived in \autoref{prop: inv BTB}. We define 
	\[
		\Phi_{\mathrm{para}; \varepsilon_{1:5}}^{\mathbf{c}} := \Phi_{\mathrm{mult}; \varepsilon_5}^{Z_2, d, d, 1} \odot P\left(\Phi_{\mathrm{inv};\varepsilon_4}^{B^TB}, \Phi_{\varepsilon_1, \varepsilon_2, \varepsilon_3}^{B^T\mathbf{fg}}\right) \circ \left(\left(
		\begin{pmatrix}
			I_p \\
			I_p
		\end{pmatrix}, 
		\mathbf{0}\right)\right),
	\]	
	According to the triangle inequality, we obtain the estimation
	\[
		\begin{aligned}
			&\sup_{\mu \in \mathcal{D}}\left\|\mathbf{c}_\mu - R\left(\Phi_{\mathrm{para}; \varepsilon_{1:5}}^{\mathbf{c}}\right)(\mu)\right\|_{2} \\
			\leq{}& \sup_{\mu \in \mathcal{D}}\left\|(B_{\mu}^T B_{\mu})^{-1}B_{\mu}^T(\mathbf{f}^T_\mu,\mathbf{g}^T_\mu)^T - (B_{\mu}^T B_{\mu})^{-1}R\left(\Phi_{\varepsilon_1, \varepsilon_2, \varepsilon_3}^{B^T\mathbf{fg}}\right)(\mu)\right\|_{2} \\
			&+\sup_{\mu \in \mathcal{D}}\left\|(B_{\mu}^T B_{\mu})^{-1}R\left(\Phi_{\varepsilon_1, \varepsilon_2, \varepsilon_3}^{B^T\mathbf{fg}}\right)(\mu) - \mathrm{matr}\left(R\left(\Phi_{\mathrm{inv};\varepsilon_4}^{B^TB}\right)(\mu)\right)R\left(\Phi_{\varepsilon_1, \varepsilon_2, \varepsilon_3}^{B^T\mathbf{fg}}\right)(\mu)\right\|_{2} \\
			&+\sup_{\mu \in \mathcal{D}}\left\|\mathrm{matr}\left(R\left(\Phi_{\mathrm{inv};\varepsilon_4}^{B^TB}\right)(\mu)\right)R\left(\Phi_{\varepsilon_1, \varepsilon_2, \varepsilon_3}^{B^T\mathbf{fg}}\right)(\mu) - R\left(\Phi_{\mathrm{para}; \varepsilon_{1:5}}^{\mathbf{c}}\right)(\mu)\right\|_{2} \\
			=:{}& \mathbf{I} + \mathbf{II} + \mathbf{III}.
		\end{aligned}
	\]
	The term $\mathbf{I}$ can be further bounded by 
	\[
		\begin{aligned}
			\mathbf{I} \leq{}& \sup_{\mu \in \mathcal{D}}\left\|(B_{\mu}^T B_{\mu})^{-1}\right\|_2 \cdot \left(\left\|B_{\mu}^T(\mathbf{f}^T_\mu,\mathbf{g}^T_\mu)^T - B_{\mu}^TR\left(\Phi_{\varepsilon_2}^{\mathbf{fg}}\right)(\mu)\right\|_{2} \right.\\ 
			&+ \left\|B_{\mu}^TR\left(\Phi_{\varepsilon_2}^{\mathbf{fg}}\right)(\mu) - \mathrm{matr}\left(R\left(\Phi_{V,\varepsilon_1}^{B^T}\right)(\mu)\right)R\left(\Phi_{\varepsilon_2}^{\mathbf{fg}}\right)(\mu)\right\|_{2}  \\
			&+ \left. \left\|\mathrm{matr}\left(R\left(\Phi_{V,\varepsilon_1}^{B^T}\right)(\mu)\right)R\left(\Phi_{\varepsilon_2}^{\mathbf{fg}}\right)(\mu) - R\left(\Phi_{\varepsilon_1, \varepsilon_2, \varepsilon_3}^{B^T\mathbf{fg}}\right)(\mu)\right\|_{2} \right) \\
			\leq{}& \beta^{-2}(\mathbf{IV} + \mathbf{V} + \mathbf{VI}).
		\end{aligned}
	\]

	For term $\mathbf{IV}$ and $\mathbf{V}$, \autoref{assumption: spectrum of BTB}, \autoref{assumption: mu to B} and \autoref{assumption: mu to fg} yield $\mathbf{IV} \leq \alpha \varepsilon_2, \mathbf{V} \leq (\gamma + \varepsilon_2) \varepsilon_1$.

	For term $\mathbf{VI}$, we assume that 
	\[
		\begin{aligned}
			\sup_{\mu \in \mathcal{D}}\left\|\mathrm{matr}\left(R\left(\Phi_{V,\varepsilon_1}^{B^T}\right)(\mu)\right)\right\|_2 &\leq \alpha + \varepsilon_1 \leq Z_1, \\
			\sup_{\mu \in \mathcal{D}}\left\|R\left(\Phi_{\varepsilon_2}^{\mathbf{fg}}\right)(\mu)\right\|_2 &\leq \gamma + \varepsilon_2 \leq Z_1.
		\end{aligned}
	\]
	Then, by \autoref{mult_mnk} we have $\mathbf{VI} \leq \varepsilon_3.$

	For term $\mathbf{I}$, combining the estimates above yields $\mathbf{I} \leq \beta^{-2}(\alpha \varepsilon_2 + (\gamma + \varepsilon_2) \varepsilon_1 + \varepsilon_3).$
	
	For term $\mathbf{II}$, by the bound of term $\mathbf{IV}, \mathbf{V}$ and $\mathbf{VI}$ we have 
	\[
		\begin{aligned}
			\sup_{\mu \in \mathcal{D}}\left\|R\left(\Phi_{\varepsilon_1, \varepsilon_2, \varepsilon_3}^{B^T\mathbf{fg}}\right)(\mu)\right\|_2 &\leq \mathbf{IV} + \mathbf{V} + \mathbf{VI} + \left\|B_{\mu}^T(\mathbf{f}^T_\mu,\mathbf{g}^T_\mu)^T\right\|_2 \\
			&\leq \alpha \varepsilon_2 + (\gamma + \varepsilon_2) \varepsilon_1 + \varepsilon_3 + \alpha \gamma.
		\end{aligned}
	\]
	Hence, by \autoref{prop: inv BTB} we have 
	\[
		\begin{aligned}
			\mathbf{II} &\leq \sup_{\mu \in \mathcal{D}}\left\|(B_{\mu}^T B_{\mu})^{-1} - \mathrm{matr}\left(R\left(\Phi_{\mathrm{inv};\varepsilon_4}^{B^TB}\right)(\mu)\right)\right\|_{2} \cdot \left\|R\left(\Phi_{\varepsilon_1, \varepsilon_2, \varepsilon_3}^{B^T\mathbf{fg}}\right)(\mu)\right\|_{2} \\
			&\leq \varepsilon_4 (\alpha \varepsilon_2 + (\gamma + \varepsilon_2) \varepsilon_1 + \varepsilon_3 + \alpha \gamma).
		\end{aligned}
	\]

	For term $\mathbf{III}$, we assume that 
	\[
		\begin{aligned}
			\sup_{\mu \in \mathcal{D}}\left\|\mathrm{matr}\left(R\left(\Phi_{\mathrm{inv};\varepsilon_4}^{B^TB}\right)(\mu)\right)\right\|_2 &\leq \varepsilon_4 + \beta^{-2} \leq Z_2, \\
			\sup_{\mu \in \mathcal{D}}\left\|R\left(\Phi_{\varepsilon_1, \varepsilon_2, \varepsilon_3}^{B^T\mathbf{fg}}\right)(\mu)\right\|_2 &\leq \alpha \varepsilon_2 + (\gamma + \varepsilon_2) \varepsilon_1 + \varepsilon_3 + \alpha \gamma \leq Z_2.
		\end{aligned}
	\]
	Then, by \autoref{mult_mnk} we have $\mathbf{III} \leq \varepsilon_5.$

	Combining these estimates yields
	\[
		\begin{aligned}
			&\sup_{\mu \in \mathcal{D}}\left\|\mathbf{c}_\mu - R\left(\Phi_{\mathrm{para}; \varepsilon_{1:5}}^{\mathbf{c}}\right)(\mu)\right\|_{2} \leq \mathbf{I} + \mathbf{II} + \mathbf{III} \\
			\leq{}&\beta^{-2}(\alpha \varepsilon_2 + (\gamma + \varepsilon_2) \varepsilon_1 + \varepsilon_3) + \varepsilon_4 (\alpha \varepsilon_2 + (\gamma + \varepsilon_2) \varepsilon_1 + \varepsilon_3 + \alpha \gamma) + \varepsilon_5.
		\end{aligned}
	\]
	Next, we estimate the size of the ReLU neural networks $\Phi_{\mathrm{para}; \varepsilon_{1:5}}^{\mathbf{c}}$. By \autoref{lemma: sparse concatenation}, \autoref{lemma: parallelization}, \autoref{mult_mnk} and \autoref{prop: inv BTB}, we have 
	\[
		\begin{aligned}
			&L(\Phi_{\mathrm{para}; \varepsilon_{1:5}}^{\mathbf{c}}) \\
			\leq{}& L(\Phi_{\mathrm{mult}; \varepsilon_5}^{Z_2, d, d, 1}) \\
			&+ \max\left\{L(\Phi_{\mathrm{inv};\varepsilon_4}^{B^TB}), L(\Phi_{\mathrm{mult}; \varepsilon_3}^{Z_1, d, N, 1}) + \max\left\{L(\Phi_{V, \varepsilon_1}^{B^T}), L(\Phi_{\varepsilon_2}^{\mathbf{fg}})\right\}\right\} \\
			\leq{}& C_{\mathrm{mult}} \cdot\left(\log_2(1 / \varepsilon_5)+\log_2(d \sqrt{d})+\log_2(\max\{1, Z_2\})\right) \\
			&+ C^{B}_{L} \cdot \log_2(\log_2(1/\varepsilon_4))\cdot(\log_2(1/\varepsilon_4) + \log_2 d) \\ 
			&+ C^{B}_{L}\log_2 N + L(B; V, \varepsilon_{4,1}) \\
			&+ C_{\mathrm{mult}} \cdot\left(\log_2(1 / \varepsilon_3)+\log_2(N \sqrt{d})+\log_2(\max\{1, Z_1\})\right) \\
			&+ L(B; V, \varepsilon_1) + L(\mathbf{fg}; \varepsilon_2),
		\end{aligned}
	\]
	where $\displaystyle \varepsilon_{4,1}:= \min\left\{\frac{\beta^4 \varepsilon_4}{24\alpha}, \frac{\beta^2}{12\alpha}, \frac{\sqrt{\varepsilon_4}\beta^2}{4}, \frac{\beta}{3}\right\}$, and 
	\[
		\begin{aligned}
			&M(\Phi_{\mathrm{para}; \varepsilon_{1:5}}^{\mathbf{c}}) \\
			\leq{}& 2M(\Phi_{\mathrm{mult}; \varepsilon_5}^{Z_2, d, d, 1}) + 4M(\Phi_{\mathrm{inv};\varepsilon_4}^{B^TB}) \\
			&+ 8M(\Phi_{\mathrm{mult}; \varepsilon_3}^{Z_1, d, N, 1}) + 16M(\Phi_{V, \varepsilon_1}^{B^T}) + 16M(\Phi_{\varepsilon_2}^{\mathbf{fg}}) \\
			&+ 32(Nd + N)\max\left\{L(\Phi_{V, \varepsilon_1}^{B^T}), L(\Phi_{\varepsilon_2}^{\mathbf{fg}})\right\} \\
			&+ 8(d^2 + d)\max\left\{L(\Phi_{\mathrm{inv};\varepsilon_4}^{B^TB}), L(\Phi_{\mathrm{mult}; \varepsilon_3}^{Z_1, d, N, 1}) + \max\left\{L(\Phi_{V, \varepsilon_1}^{B^T}), L(\Phi_{\varepsilon_2}^{\mathbf{fg}})\right\}\right\} \\
			\leq{}& 2C_{\mathrm{mult}}d^2 \cdot \left(\log_2(1 / \varepsilon_5)+\log_2(d \sqrt{d})+\log_2(\max\{1, Z_2\})\right)\\
			&+ 4C^{B}_{M}d^2 \log_2(1/\varepsilon_4) \cdot (\log_2(\log_2(1/\varepsilon_4)) + d)\cdot(\log_2(1/\varepsilon_4) + \log_2 d) \\
			&+ 4C^{B}_{M} N d^2 \cdot  \left(\log_2(1/\varepsilon_4) + \log_2 N\right) + 8M(B; V, \varepsilon_{4,1}) \\
			&+ 8C_{\mathrm{mult}} N d \cdot\left(\log_2(1 / \varepsilon_3)+\log_2(N \sqrt{d})+\log_2(\max\{1, Z_1\})\right) \\
			&+ 16M(B; V, \varepsilon_1) + 16M(\mathbf{fg}; \varepsilon_2) \\
			&+ 64NdL(B; V, \varepsilon_1) + 64NdL(\mathbf{fg}; \varepsilon_2) + 16d^2L(B; V, \varepsilon_{4, 1}) \\
			&+ 16d^2 C^{B}_{L}\log_2(\log_2(1/\varepsilon_4))\cdot(\log_2(1/\varepsilon_4) + \log_2 d) + 16d^2 C^{B}_{L}\log_2 N  \\
			&+ 16d^2 C_{\mathrm{mult}} \cdot \left(\log_2(1 / \varepsilon_3)+\log_2(N \sqrt{d})+\log_2(\max\{1, Z_1\})\right) \\
			&+ 16d^2L(B; V, \varepsilon_1) + 16d^2L(\mathbf{fg}; \varepsilon_2).
		\end{aligned}
	\]

	It remains to determine the values of $\varepsilon_1, \ldots, \varepsilon_5$. We have to guarantee that 
	\[
		\begin{aligned}
			&\alpha + \varepsilon_1 \leq Z_1, \quad \gamma + \varepsilon_2 \leq Z_1, \quad \varepsilon_4 + \beta^{-2} \leq Z_2, \quad \alpha \varepsilon_2 + (\gamma + \varepsilon_2) \varepsilon_1 + \varepsilon_3 + \alpha \gamma \leq Z_2, \\
			&\beta^{-2}(\alpha \varepsilon_2 + (\gamma + \varepsilon_2) \varepsilon_1 + \varepsilon_3) + \varepsilon_4 (\alpha \varepsilon_2 + (\gamma + \varepsilon_2) \varepsilon_1 + \varepsilon_3 + \alpha \gamma) + \varepsilon_5 \leq \varepsilon,
		\end{aligned}
	\]
	which can be verified if 
	\[
		\begin{aligned}
			& \varepsilon_1 = \min \left\{\frac{\varepsilon\beta^2}{12\gamma}, \frac{\beta \sqrt{\varepsilon}}{4}, \frac{\alpha}{4}, \frac{\sqrt{\alpha \gamma}}{2}\right\}, \quad \varepsilon_2 = \min \left\{\frac{\varepsilon\beta^2}{12\alpha}, \frac{\beta \sqrt{\varepsilon}}{4}, \frac{\gamma}{4}, \frac{\sqrt{\alpha \gamma}}{2}\right\}, \\
			& \varepsilon_3 = \min \left\{\frac{\varepsilon\beta^2}{12}, \frac{\alpha \gamma}{4}\right\}, \quad \varepsilon_4 = \frac{\varepsilon}{6\alpha\gamma}, \quad \varepsilon_5 = \frac{\varepsilon}{3}, \\
			& Z_1 = \alpha + \gamma + \varepsilon_1 + \varepsilon_2, \quad Z_2 = \varepsilon_4 + \beta^{-2} + \alpha \varepsilon_2 + (\gamma + \varepsilon_2) \varepsilon_1 + \varepsilon_3 + \alpha \gamma.
		\end{aligned}
	\]
	Let $\Phi_{\mathrm{para};\varepsilon}^{\mathbf{c}} := \Phi_{\mathrm{para}; \varepsilon_{1:5}}^{\mathbf{c}}$ and plug in these values yields
	\[	
		\sup_{\mu \in \mathcal{D}}\left\|\mathbf{c}_\mu - \mathrm{matr}\left(R\left(\Phi_{\mathrm{para}; \varepsilon_{1:5}}^{\mathbf{c}}\right)(\mu)\right)\right\|_{2} \leq \frac{\varepsilon}{3} + \frac{\varepsilon}{3} + \frac{\varepsilon}{3} \leq \varepsilon,
	\]
	\[
		\begin{aligned}
			L(\Phi_{\mathrm{para};\varepsilon}^{\mathbf{c}}) \leq{}& C^{\mathbf{c}}_{L}\cdot\log_2(\log_2(1/\varepsilon))\cdot(\log_2(1/\varepsilon) +  \log_2 d) \\
			&+C^{\mathbf{c}}_{L} \log_2 N + L(B; V, \varepsilon_{4, 1}) + L(B; V, \varepsilon_1) + L(\mathbf{fg}; \varepsilon_2),
		\end{aligned}
	\]
	\[
		\begin{aligned}
			&M(\Phi_{\mathrm{para}; \varepsilon_{1:5}}^{\mathbf{c}}) \\
			\leq{}& C^{\mathbf{c}}_{M}N d^2 \cdot (\log_2(1/\varepsilon) + \log_2 N) \\
			&+ C^{\mathbf{c}}_{M}d^2 \log_2(1/\varepsilon) \cdot (\log_2(\log_2(1/\varepsilon)) + d)\cdot(\log_2(1/\varepsilon) + \log_2 d) \\
			&+ C^{\mathbf{c}}_{M}d^2  \cdot \log_2(\log_2(1/\varepsilon)) \cdot \log_2 N \\
			&+ 8M(B; V, \varepsilon_{4,1}) + 16M(B; V, \varepsilon_1) + 16M(\mathbf{fg}; \varepsilon_2) \\
			&+80NdL(B; V, \varepsilon_1) + 80NdL(\mathbf{fg}; \varepsilon_2) + 16d^2L(B; V, \varepsilon_{4, 1}),
		\end{aligned}
	\]
	where $C^{\mathbf{c}}_{L}$ and $C^{\mathbf{c}}_{M}$ are suitably chosen constants only depending on $\alpha, \beta$ and $\gamma$. We relabel $\varepsilon_{4, 1}$ as $\varepsilon_3$ in the main theorem, and the proof is completed. 
\end{proof}

\section{Numerical Experiments}\label{section: Numerical Experiments}
In this section, we verify our analysis with numerical examples. Our algorithm operates in two distinct phases: an offline phase and an online phase. In the offline phase, we leverage the POD method to produce the reduced basis and integrate it in the training of neural networks, which require the bulk of the computational workload. This setup enables the online phase to simultaneously and efficiently simulate solutions for a vast array of parameters, a feat achieved through the synergistic exploitation of neural network parallelism and matrix computations. Our algorithm demonstrates superior efficacy compared to the Least Squares Reduced Radial Basis Function Method (LS-RRBFM) described by \cite{Chen2016Reduced}, which employs a similar offline-online decomposition strategy. We also compare the efficiency of neural networks with or without the use of POD.

Specifically, in the offline phase, we generate the data set and train the neural network with this data set. We first solve the PPDE with a collocation method, which generally discretizes the domain $\Omega$ to get $N$ nodes in the interior and on the boundary $\partial \Omega$. In this paper, we consider the RBF-FD method, which is briefly introduced in \autoref{appendix section: RBF-FD Method for PDE}. The RBF-FD method is mesh-free and, therefore, suitable for irregular geometries. However, the accuracy of this method is heavily influenced by the discretization method. Generally, a set of nodes that allows low interpolation error cannot have a large region in $\Omega$ without centers, which means having a small fill distance. Many practical methods can produce well-distributed node sets. Readers may refer to recent progress \cite{Fornberg2015Fast, Shankar2018Robust, Suchde2023Pointa} that generate robust node sets suitable for RBF-FD. After the discretization, the RBF-FD method is ready to calculate a high-fidelity ground truth solution, or snapshot $\mathbf{u}_\mu$ given any $\mu \in \mathcal{D}$. By sampling the parameter space of $n_s$ parameter samples, we calculate the snapshot matrix $S \in \mathbb{R}^{N \times n_s}$. 

Then we employ the POD on $S$ and obtain the reduced basis matrix $V \in \mathbb{R}^{N \times d}$. To determine the reduced basis dimension $d$, we choose a desired error tolerance $\varepsilon_{\mathrm{POD}} > 0$ and calculate the POD dimension $d$ by 
\[
    d := \min\bigg\{n \in \mathbb{N} \mid \frac{\sum_{i=1}^n \sigma_i^2}{\sum_{i=1}^r \sigma_i^2} \geq 1-\varepsilon_{\mathrm{POD}}^2 \bigg\}
\]
where the rank $r \leq \min \left(N, n_s\right)$. Proposition 6.1. of \cite{Quarteroni2016Reduced} shows that the POD basis guarantees a minimal projection-recovery error among the selected snapshots, and this error also equals the square sum of the tail of the singular values, which must be less than $\varepsilon_{\mathrm{POD}}^2$. 

We also need to consider the values of $n_s, \varepsilon_{\mathrm{POD}}$, and the location of the parameter samples $\{\mu^1, \ldots, \mu^{n_s}\}$. The choices of $n_s, \varepsilon_{\mathrm{POD}}$ must balance the accuracy and computation time. The location of the samples should be well-distributed and representative. One may consider a uniform tensorial sampling with different grid step sizes in different dimensions, while other types of sparse grids are also possible. Different sizes and locations of $\{\mu^1, \ldots, \mu^{n_s}\}$ do not necessarily have a significant impact on $S$ and $d$. This depends on the nature of the equation. The key is to ensure that the problem is suitable for the use of the RCM and is reducible with the POD method. To verify that, we can check if the decay of the singular values of $S$ is sufficiently fast. If the rates of decay are essentially the same for different values of $n_s$ and locations of samples, a smaller $n_s$ and simpler sampling method will be more favored. See a more detailed discussion in \cite{Quarteroni2016Reduced}.

Now, we are ready to generate the data set. For any $\mu \in \mathcal{D}$, we solve the PPDE with RBF-FD and obtain the coefficient vector $V^T\mathbf{u}_{\mu} \in \mathbb{R}^{d}$ under the POD basis, which represents a parametric map $\mathcal{D} \to \mathbb{R}^{d}$. Hence, we can sample $n_{\mathrm{data}}$ points in $\mathcal{D}$ and calculate the input-output pairs $\{(\mu^i, V^T\mathbf{u}_{\mu^i})\}_{i=1}^{n_{\mathrm{data}}}$. We then train the neural networks using these data to approximate this parametric map. We note that, compared with the numerical algorithm presented in \cite{Geist2021Numerical} and \cite{Lei2022Solving}, the use of POD in our algorithm or other possible reduced basis methods, can greatly reduce the output dimension of the neural network from $N$ to $d$, which is beneficial for networks training convergence, reduction in network sizes and training time. We will compare the numerical performance of networks with and without POD.

In the online phase, predictions can be made in parallel if multiple parameters $\{\mu^{1}, \ldots, \mu^{n}\}$ are given simultaneously. We only need to compute one forward propagation through the neural network, denoted by $\mathbf{NN}$, and multiply the POD basis, which gives the prediction $V\mathbf{NN}([\mu^{1} \vert \cdots \vert \mu^{n}]) \in \mathbb{R}^{N \times n}$. We outline the entire algorithm we use in numerical experiments in \autoref{POD-DNN algorithm}.
\begin{algorithm}
	\caption{POD-DNN algorithm for parametric PDEs}\label{POD-DNN algorithm}
	\textbf{The offline phase}
	\begin{algorithmic}[1]
		\STATE Discretize the domain $\Omega$ to get $N$ nodes, and discretize the parameter space $\mathcal{D}$ by $\{\mu^1, \ldots, \mu^{n_s}\}$.
		\FOR{$i = 1, \ldots, n_s$}
		\STATE Use RBF-FD to solve the PPDE \eqref{PPDE} with parameter $\mu^i$, obtain high-fidelity solution $\mathbf{u}_{\mu^i}$.
		\ENDFOR	
		\STATE Concatenate the snapshot matrix $S = [\mathbf{u}_{\mu^1}, \ldots ,\mathbf{u}_{\mu^{n_s}}]$.
		\STATE Perform SVD on $S$ to compute the POD basis matrix $V$ with tolerance $\varepsilon_{\mathrm{POD}}$.
		\STATE Sample another $n_{\mathrm{data}}$ points $\{\mu^1, \ldots, \mu^{n_{\mathrm{data}}}\}$ in $\mathcal{D}$.
		\FOR{$i = 1, \ldots, n_{\mathrm{data}}$}
		\STATE Use RBF-FD to solve the PDE \eqref{PPDE} with parameter $\mu^i$, obtain high-fidelity solution $\mathbf{u}_{\mu^i}$ and coefficient vector $V^T\mathbf{u}_{\mu^i}$.
		\ENDFOR	
		\STATE Save the input-output pairs $\{(\mu^i, V^T\mathbf{u}_{\mu^i})\}_{i=1}^{n_{\mathrm{data}}}$ as data set.
		\STATE Train the network $\mathbf{NN}$ with the data set.
	\end{algorithmic}
	
	\textbf{The online phase}
	\begin{algorithmic}[1]
		\STATE Given multiple parameters $\{\mu^{1} , \ldots , \mu^{n}\}$, compute $V\mathbf{NN}([\mu^{1} \vert \cdots \vert \mu^{n}])$.
	\end{algorithmic}
\end{algorithm}

Now we delve into three PDE examples: 2-D Helmholtz equation, (2+1)-D sine-Gordon equation, and 3-D static Schrödinger equation. Our examples rigorously evaluate the algorithm across a comprehensive range of scenarios, including irregular geometries, time-dependent PDEs, nonlinear PDEs, and infinite-dimensional functional parameters. Most settings and selections of hyperparameters of RBF-FD, POD, and NN are summarized in \autoref{appendix section: Tables of Hyperparameters}.

\subsection{2-D Helmholtz equation}

We first consider the two-dimensional Helmholtz equation in (28a) of \cite{Chen2016Reduced}. 
\begin{equation}\label{two-dimensional Helmholtz equation}
    \begin{aligned}
        -u_{xx} - \mu_1 u_{yy} - \mu_2 u & = -10\sin(8x(y-1)), && (x, y) \in \Omega, \mu \in \mathcal{D} = [0.1, 4] \times [0, 2],\\
        u & = 0, && (x, y) \in \partial \Omega,
    \end{aligned}
\end{equation}
where $\Omega$, illustrated by \autoref{fig: Helmholtz discretization}, is given by the polar coordinates equation $r(\theta) < 0.8 + 0.1(\sin(6\theta) + \sin(3\theta)).$ The discretization shown in \autoref{fig: Helmholtz discretization} is generated by the algorithm proposed in \cite{Fornberg2015Fast} and a corresponding open-source code \cite{Mishra2019NodeLab}. These discrete nodes are slightly clustering toward the boundary.
\begin{figure}[htbp!]
    \centering
    \includegraphics[width=0.3\textwidth]{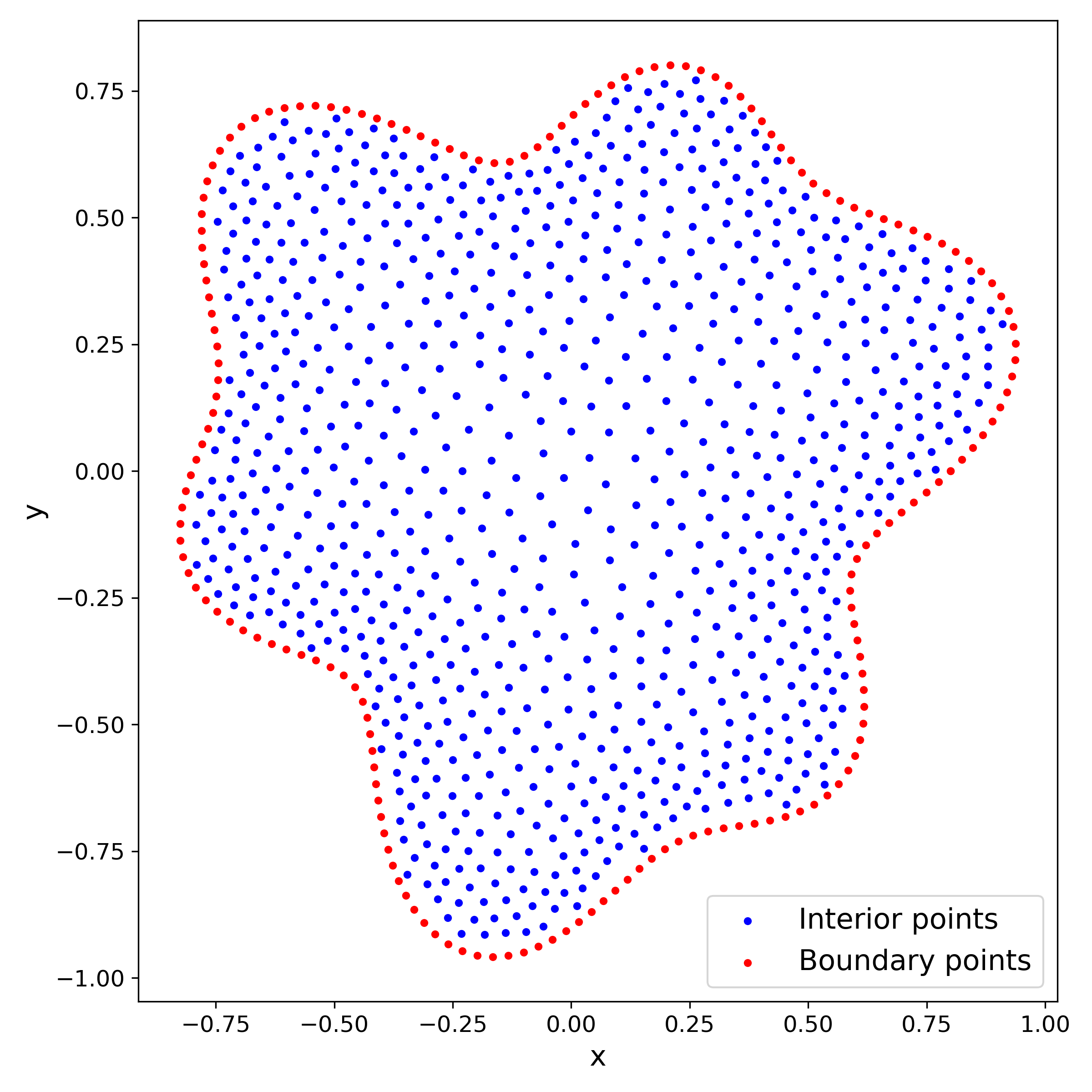}
    \caption{The discretization of Helmholtz equation.}
    \label{fig: Helmholtz discretization}
\end{figure}

As we have mentioned above, we can draw the spectral decay of the snapshot matrix $S$ to verify the reducibility of this equation under the POD method. \autoref{fig: Helmholtz spectrum} illustrates the spectral decay of $S$, and the proportion of energy (i.e., the square sum of singulars) discarded under the corresponding truncation position. We confirm that in this problem, the decay speed of singular values is sufficiently fast, which provides rationality for the implementation of POD.
\begin{figure}[htbp!]
    \centering
    \begin{subfigure}[b]{0.4\textwidth}
        \includegraphics[width=\linewidth]{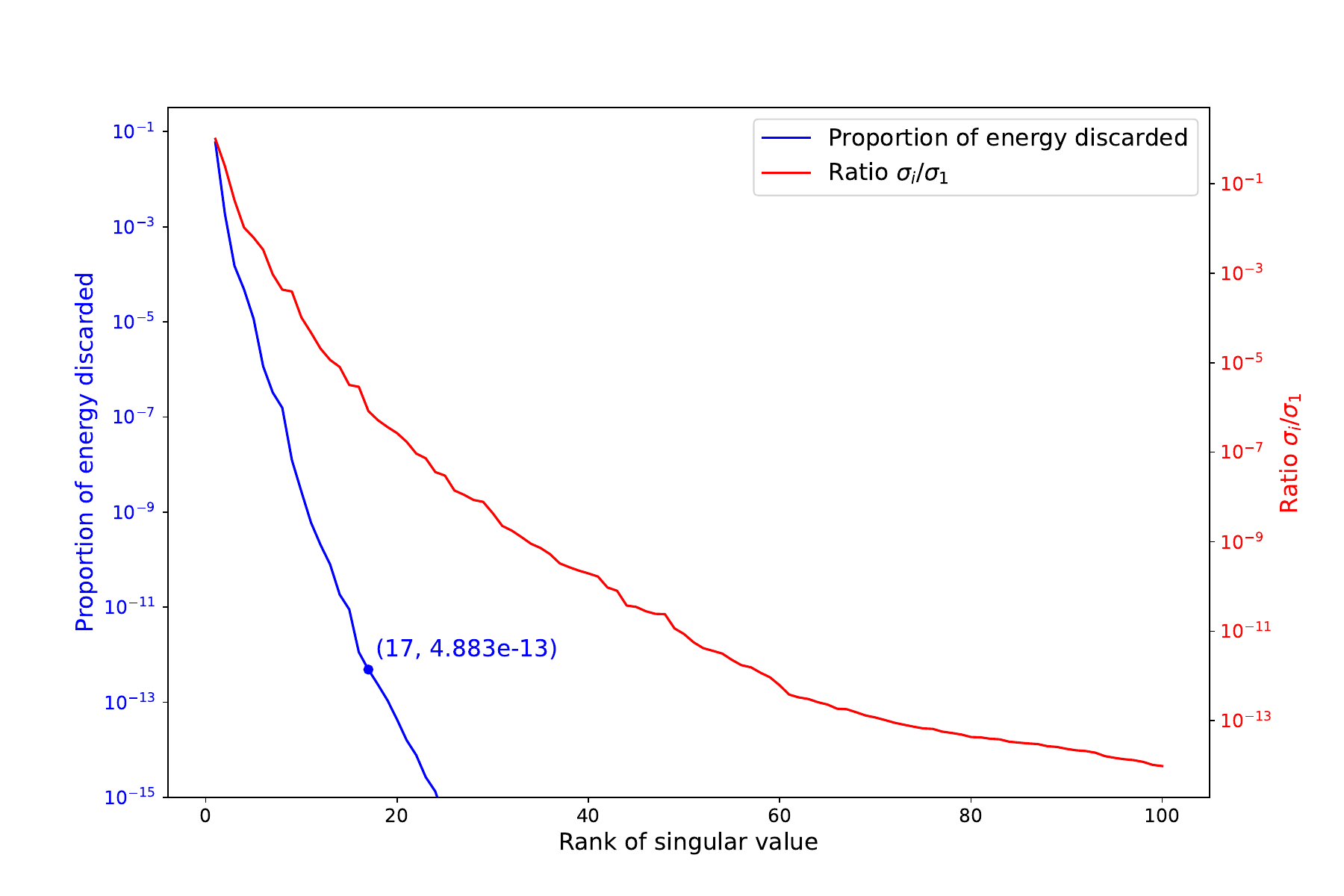}
        \caption{Helmholtz equation}
        \label{fig: Helmholtz spectrum}
    \end{subfigure}
    \begin{subfigure}[b]{0.4\textwidth}
        \includegraphics[width=\linewidth]{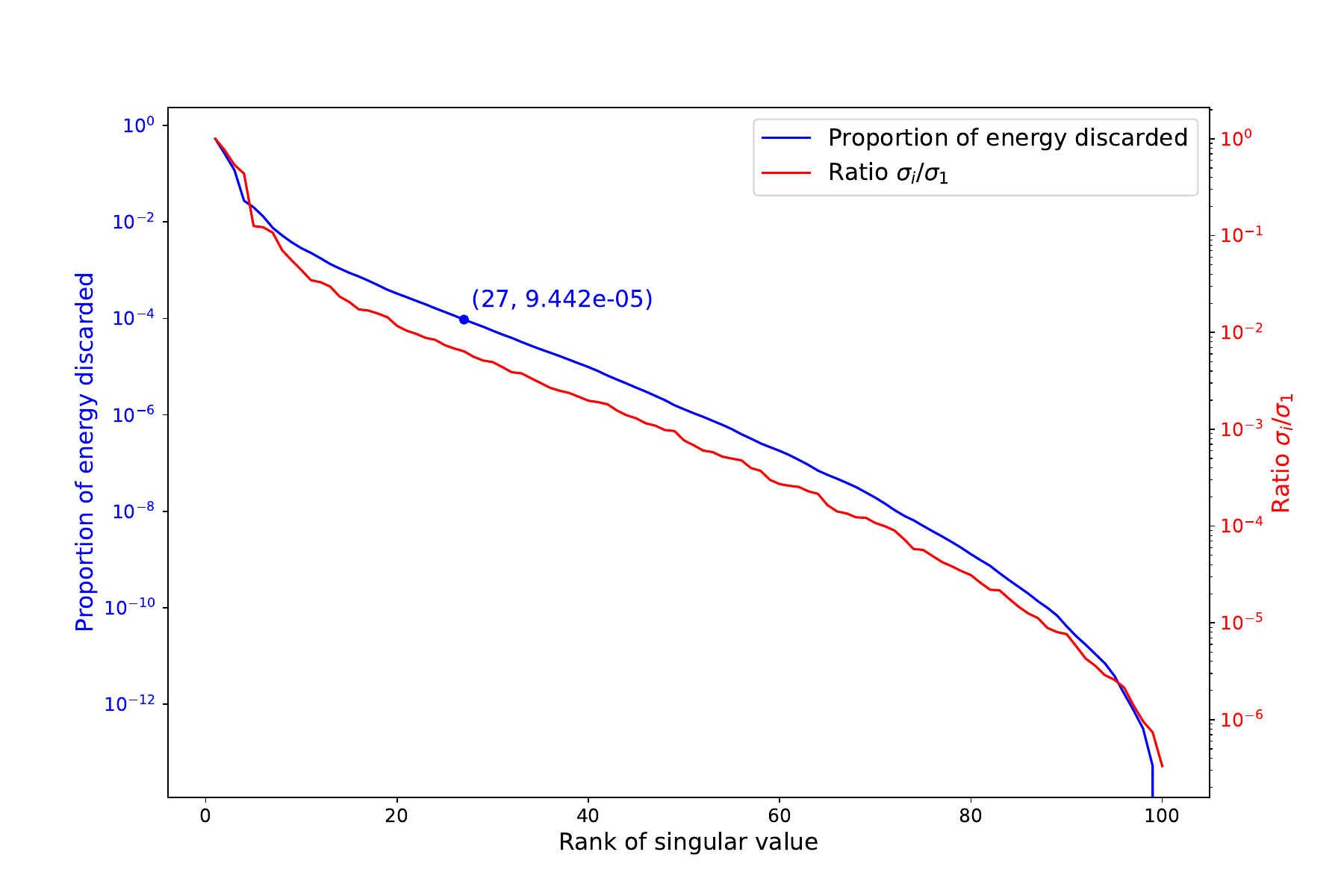}
        \caption{Sine-Gordon equation}
        \label{fig: sine-Gordon spectrum}
    \end{subfigure}

    \begin{subfigure}[b]{0.4\textwidth}
        \includegraphics[width=\linewidth]{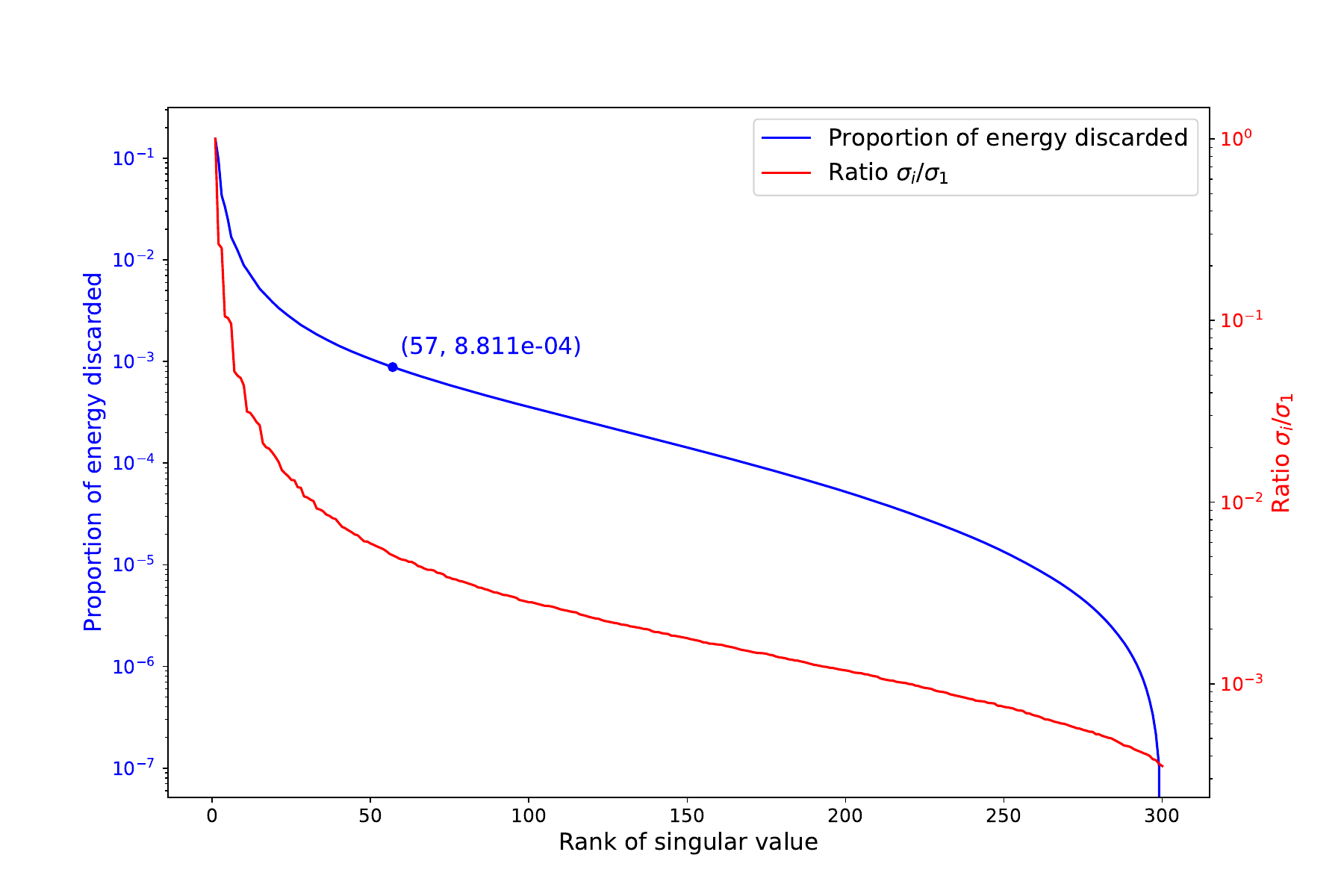}
        \caption{Static Schrödinger equation}
        \label{fig: Schrodinger spectrum}
    \end{subfigure}

    \caption{The ratio of the singular values of the snapshot matrix to the maximum singular value and the corresponding proportion of energy discarded. The dot indicates the reduced dimension and its proportion of energy discarded.}
    \label{fig: spectrum}
\end{figure}

Here, we compare our POD-DNN model with LS-RRBFM in \cite{Chen2016Reduced} and DNN without POD. LS-RRBFM also adopts RBF-FD and an offline-online phase strategy. It constructs the reduced basis by a greedy algorithm. We set the same dimension of reduced basis $d_{\mathrm{Greedy}} = d_{\mathrm{POD}} = d$ for LS-RRBFM and POD. The use of POD significantly reduces the output dimension of DNN from $N = 1036$ to $d = 17$ in this numerical example. We set the same network structure for POD-DNN and DNN, differing only in the dimension of the output layer, where the DNN's output dimension remains the full order $N$ without reduction. 

\autoref{table: Helmholtz error and inference time} presents the average relative $\ell_2$ error and total inference time for different PPDE solvers on a test data set containing $2000$ parameters. The solutions from RBF-FD are taken as the ground truth for error computation. Unlike RBF-FD and LS-RRBFM, POD-DNN and DNN do not require solving any linear system during online inference. Besides, neural networks are very suitable for quick inference given multiple parameters due to parallel computing. They significantly reduce inference time while maintaining accuracy equivalent to traditional methods. POD-DNN gets the lowest error and inference time in this example. We emphasize that all inference computations in this experiment are implemented on the CPU (Intel Core i7-11800H, 2.30 GHz base frequency) for speed comparison, as RBF-FD and LS-RRBFM are unsuitable for parallel computing acceleration on the GPU. The POD-DNN and DNN can further achieve $10$ to $40 \times $ speedup for this test data set when accelerated by our GPU (NVIDIA GeForce RTX 3060 Laptop). \autoref{fig: Helmholtz solution} illustrates the computed solutions and absolute errors for one sample $\mu = (0.927, 1.697)$ in test data set.

\begin{table}[htbp!]
    \centering
    \begin{tabular}{lcc}
        \toprule
        Solvers & Average relative $\ell_2$ error & Inference time(s)\\
        \midrule
        RBF-FD & / & 32.54843\\
        LS-RRBFM & 0.00133 & 0.93357\\
		DNN & 0.00215 & 0.01631\\
        POD-DNN & 0.00068 & 0.00976\\
        \bottomrule
    \end{tabular}
    \caption{The error and inference time on the test set of Helmholtz equation.}
    \label{table: Helmholtz error and inference time}
\end{table}
\begin{figure}[htbp!]
    \centering
    \includegraphics[width=0.7\textwidth]{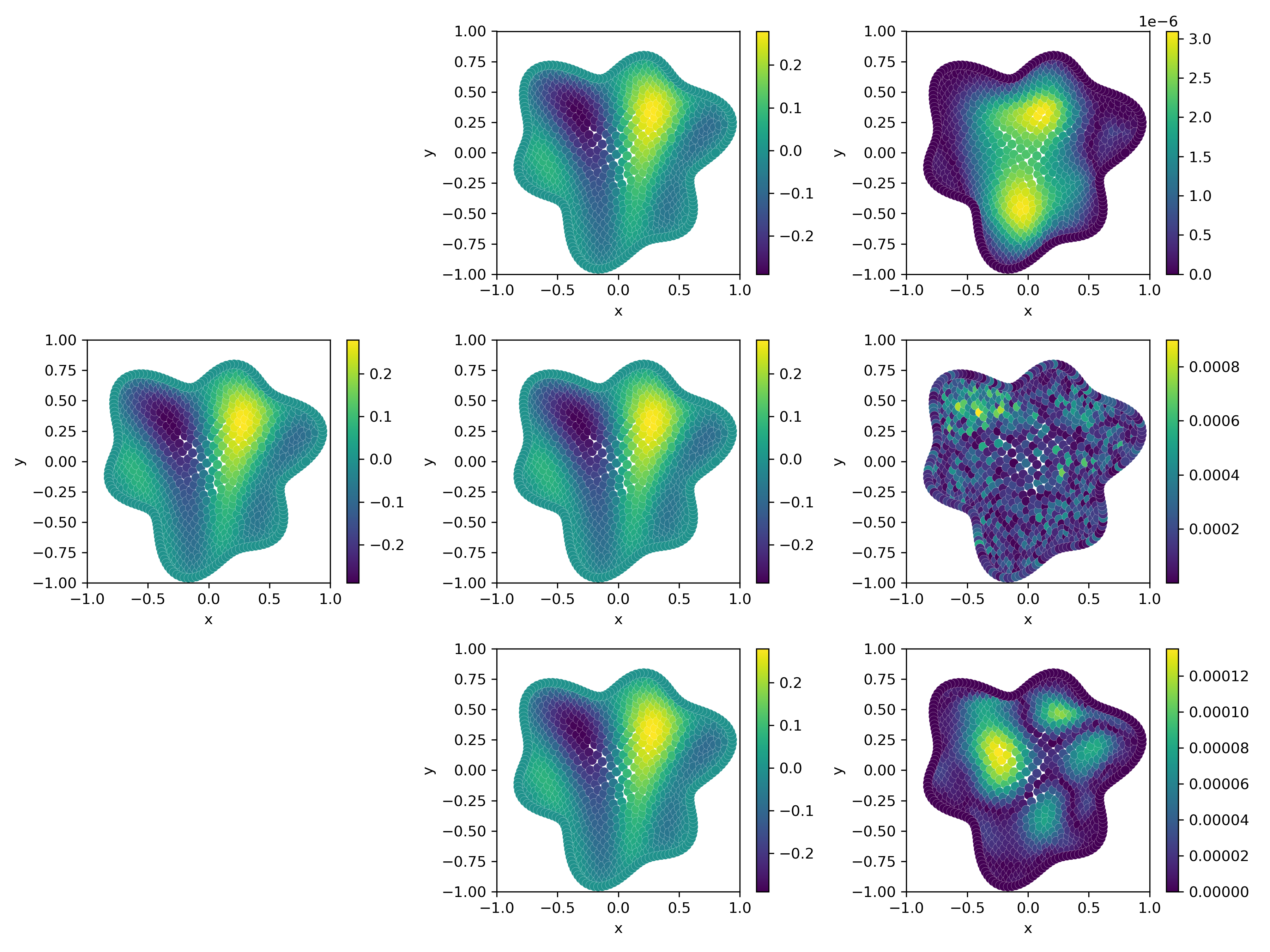}
    \caption{Comparison of solutions of Helmholtz equation. Left: RBF-FD ground truth. Middle: Results from (top to bottom) LS-RRBFM, DNN, and POD-DNN. Right: Their absolute errors.}
    \label{fig: Helmholtz solution}
\end{figure}

\subsection{(2+1)-D sine-Gordon equation}
The discussion in \autoref{section: Preliminaries and Main Results} is based on linear PPDE \eqref{linear PPDE}. Combining iteration methods and time discretization methods, we can also use RBF-FD to solve nonlinear PDEs and time-dependent PDEs. Given these parameters-solutions data generated by RBF-FD, the POD-DNN remains applicable to nonlinear and time-dependent PPDEs. We consider the (2+1)-D sine-Gordon equation in Section 4.2. of \cite{Su2019Numerical}, where they solve this nonlinear and time-dependent PDE with RBF-FD.
\begin{equation}
	\begin{aligned}
		u_{tt} + \beta u_t &= \Delta u + \varrho(x,y) \sin u, && (x,y) \in \Omega = (-7, 7) \times (-7, 7), t \in [0, 10] \\
		u(x,y,0) &= f(x,y), && (x,y) \in \Omega, \\
		u_t(x,y,0) &= 0, && (x,y) \in \Omega, \\
		\frac{\partial u}{\partial n}(x,y,t) &= 0, && (x,y) \in \partial \Omega, t \in [0, 10],
	\end{aligned}
\end{equation}
where 
\[
	f(x,y) = 4 \arctan\left(\exp\left(x+1-2\mathrm{sech}(y+7)-2\mathrm{sech}(y-7)\right)\right).
\]
In \cite{Su2019Numerical}, they fix $\beta = 0.05$ and $\varrho(x, y) \equiv -1$. Here, we treat them as variable parameters in our PPDE formulation:
\[
	\mu = (\beta, \varrho) \in [0.01, 0.1] \times [-2, -0.2].
\]
We follow \cite{Su2019Numerical} to generate the data set.

It should be noted that the snapshot matrix in this problem is actually a 4-order tensor with shape $(N_x, N_y, N_t, n_s)$. The reduced basis may be extracted through general tensor decomposition methods, including higher-order singular value decomposition (HOSVD), tensor train (TT) decomposition, etc. Tensor decomposition methods have proven to be promising ways to tackle high-dimensional PDEs \cite{Bachmayr2016Tensor, Dahmen2016Tensor-Sparsity} and are closely related to reduced-order modeling. The reduced basis studied in this paper stems from the low-dimensional parameters rather than the spatiotemporal structure. For the time-dependent PDE we consider here, the snapshot matrix does not always exhibit reducibility along spatiotemporal dimensions (see a 1-D linear transport equation example presented in Section 6.7 of \cite{Quarteroni2016Reduced}). In summary, we focus exclusively on the reducibility with respect to varying parameters. Therefore, we flatten the spatiotemporal dimensions and perform POD on the resulting $(N_x  N_y  N_t \times n_s)$ matrix. We check the spectral decay of the snapshot matrix in \autoref{fig: sine-Gordon spectrum}. We specify $\varepsilon_{\mathrm{POD}} = 0.01$, leading to a significantly low reduced dimension $d=27$ compared with $N = N_x  N_y  N_t = 328149$.

The LS-RRBFM can not handle time-dependent or nonlinear equations. Therefore, we only compare the POD-DNN and DNN. We find that the coefficient vectors, which are also the output vectors of POD-DNN, exhibit significant scale magnitude variations ranging from $O(1)$ to $O(10^4)$ across different dimensions. This causes some optimization challenges for the neural network, and hence we apply the z-score normalization to the output vectors in POD-DNN. \autoref{table: sine-Gordon error and training time} demonstrates that POD-DNN has an $8\times$ training speedup without compromising generalization performance. \autoref{fig: sine-Gordon solution} shows the computed solutions and absolute errors of POD-DNN at $t = 2.5, 5.0, 7.5, 10.0$ when $\mu = (0.097, -0.432)$. 

\begin{table}[htbp!]
    \centering
    \begin{tabular}{lcc}
        \toprule
        Solvers & Average relative $\ell_2$ error & Training time\\
        \midrule
		DNN & 0.03590 & 124 min \\
        POD-DNN & 0.03718 & 15 min \\
        \bottomrule
    \end{tabular}
    \caption{The error and training time of the sine-Gordon equation.}
    \label{table: sine-Gordon error and training time}
\end{table}
\begin{figure}[htbp!]
    \centering
    \includegraphics[width=\textwidth]{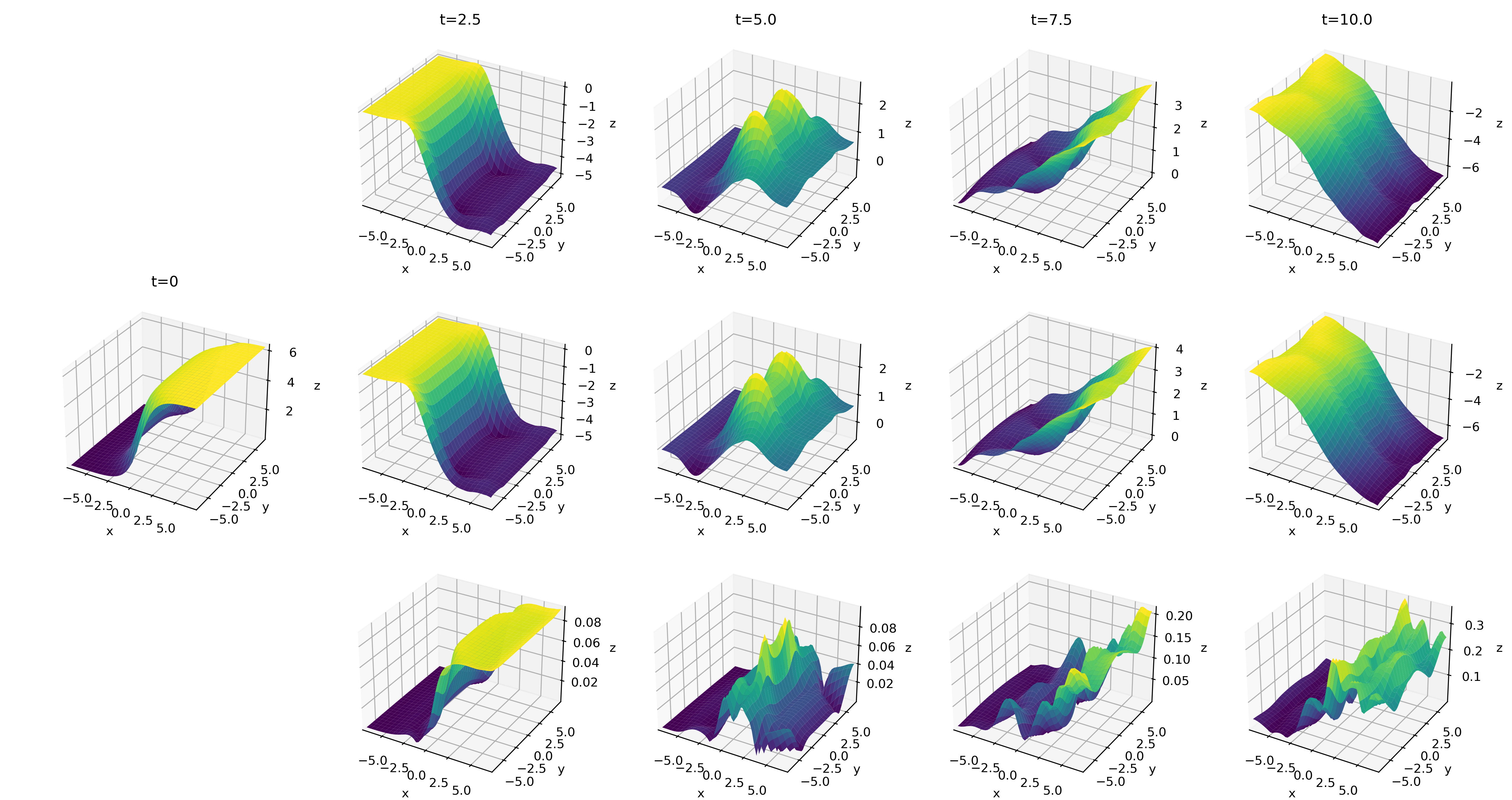}
    \caption{Comparison of solutions of the sine-Gordon equation. Top: POD-DNN. Middle: Initial value $f$ and RBF-FD. Bottom: Absolute errors.}
    \label{fig: sine-Gordon solution}
\end{figure}

\subsection{3-D static Schrödinger equation}
In this example, we solve PPDEs with a high-dimensional parameter space. We consider static Schrödinger equations in a unit 3-D ball with varying boundary conditions.
\begin{equation}
	\begin{aligned}
		-\Delta u + u &= 0, && \mathbf{x} \in \Omega, \\
		u &= f, && \mathbf{x} \in \partial\Omega.
	\end{aligned}
\end{equation}
Here, $\Omega$ is the unit open ball in $\mathbb{R}^3$ and $f$ is a functional parameter on the sphere. We learn the map $f \mapsto u$, which is between infinite-dimensional spaces of functions. Thus, this problem falls within the framework of operator learning and can be addressed by neural operator methods. Neural operators typically employ discretization methods to transform the infinite-dimensional function learning problem to a classical finite-dimensional tensor learning setting. For example, DeepONet \cite{Lu2021Learning} uses the function values at finite many sensors to represent the functions, while FNO \cite{Li2021Fourier} uses a collection of truncated Fourier modes. 

In this experiment, we adopt the same discretization scheme as DeepONet. First, we sample $N_I$ interior nodes and $N_B$ boundary nodes using a robust node generation algorithm proposed in \cite{Shankar2018Robust}. This algorithm is applicable to irregular 2D and 3D domains and is suitable for RBF-FD calculations. Then, we reformulate this operator learning problem as a high-dimensional PPDE with the associated discretized parameter $\mu = (f(\mathbf{x}_1), \cdots, f(\mathbf{x}_{N_B}))$. Finally, we sample functional parameters $f$ in a Gaussian random field $\mathcal{N}(0, (-\Delta_0 + I)^{-2})$, where $\Delta_0$ is the Laplace-Beltrami operator on sphere, and evaluate their values on boundary nodes, which can be done using Karhunen-Loeve expansion and sphere harmonics. \autoref{fig: Schrodinger discretization} illustrates the discretization and 3 distinct cases of $f$. 

\begin{figure}[htbp!]
    \centering
    \includegraphics[width=0.6\textwidth]{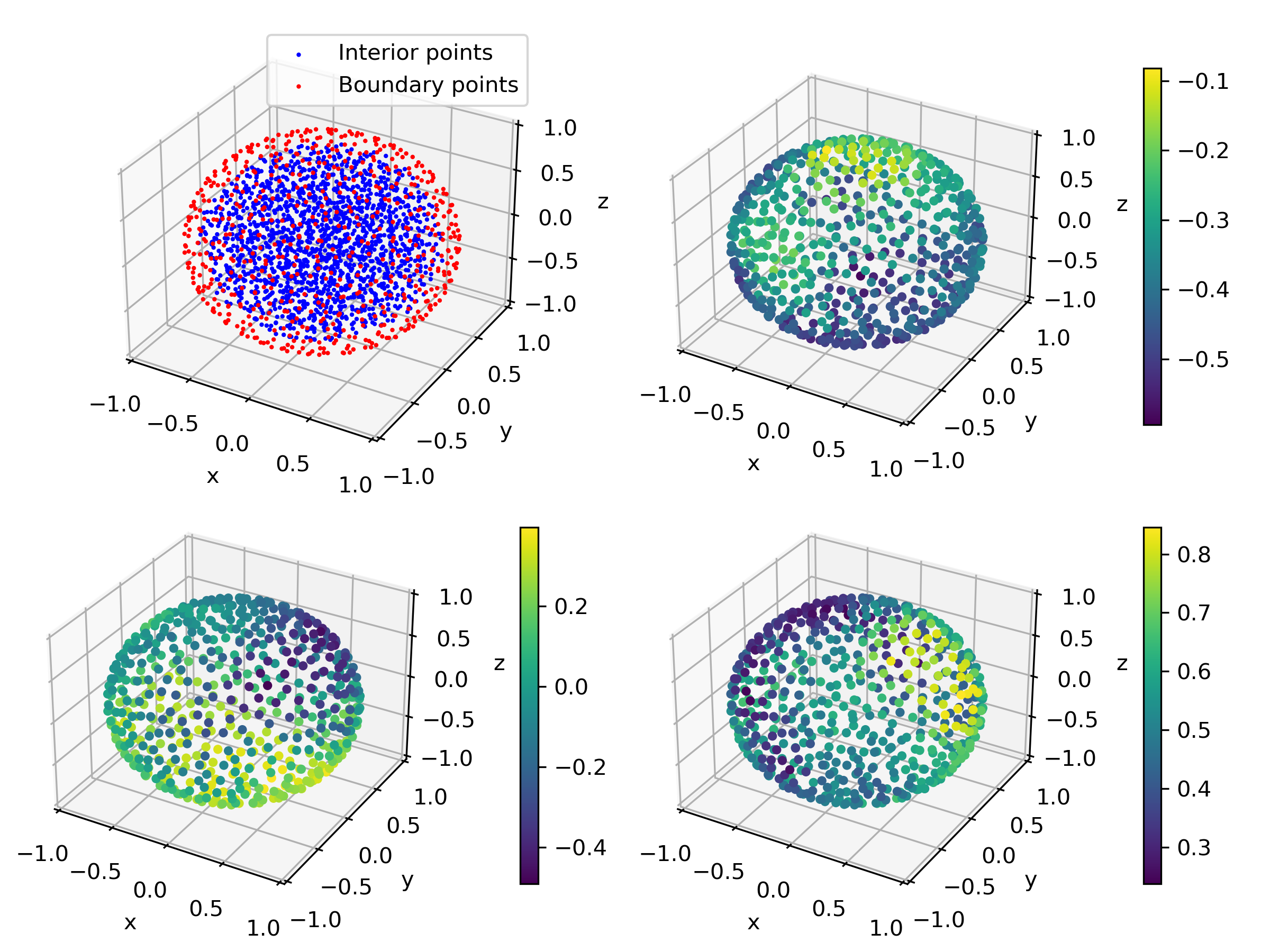}
    \caption{The discretization and 3 distinct cases of $f$ of the static Schrödinger equation.}
    \label{fig: Schrodinger discretization}
\end{figure}

Again, we verify the spectral decay in \autoref{fig: Schrodinger spectrum}. We find that this example exhibits the slowest spectral decay speed, possibly because its parameter space has the highest dimensions among the three PPDEs considered in this paper. Choosing $\varepsilon_{\mathrm{POD}} = 0.03$ results in $d = 57$. We compare numerical results of POD-DNN, DNN, and DeepONet in \autoref{table: Schrodinger error and inference time} and conclude that our algorithm can also solve high-dimensional parameter PDEs and operator learning problems with accuracy comparable to neural operator methods. \autoref{fig: Schrodinger solution} illustrates the computed solution and absolute error of POD-DNN for an $f$ in the test data set. 

\begin{table}[htbp!]
    \centering
    \begin{tabular}{lcc}
        \toprule
        Solvers & Average relative $\ell_2$ error & Inference time(s)\\
        \midrule
		DNN & 0.09291 & 0.00052 \\
        POD-DNN & 0.09908 & 0.00028 \\
		DeepONet & 0.10700 & 0.00076 \\
        \bottomrule
    \end{tabular}
    \caption{The error and inference time of the static Schrödinger equation.}
    \label{table: Schrodinger error and inference time}
\end{table}
\begin{figure}[htbp!]
    \centering
    \includegraphics[width=\textwidth]{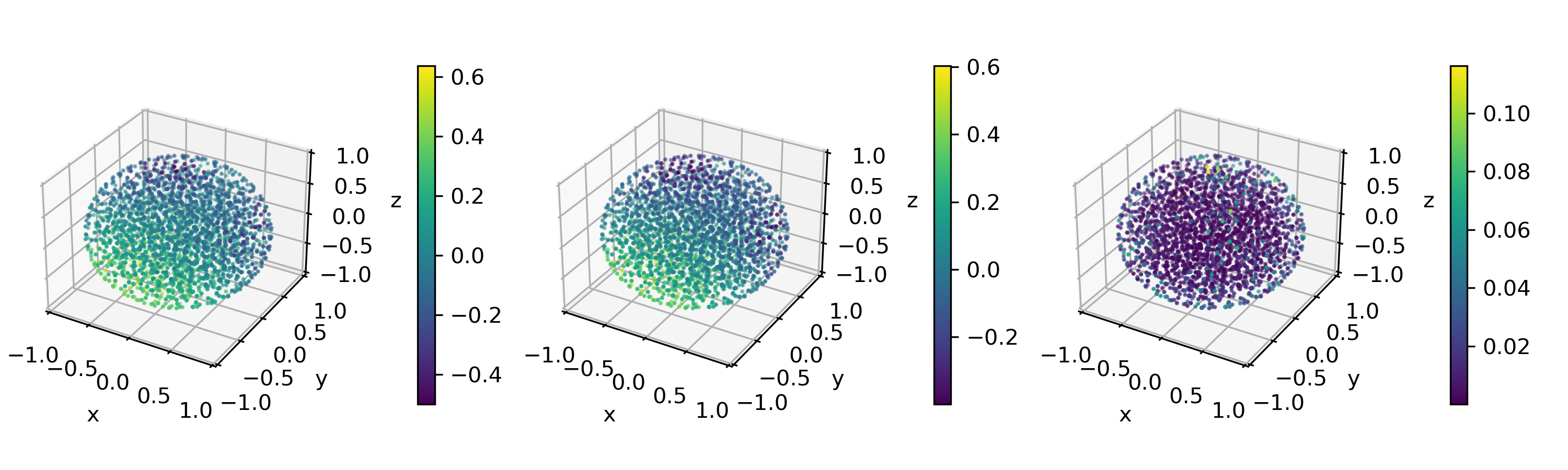}
    \caption{Comparison of solutions of the static Schrödinger equation. Left: RBF-FD. Middle: POD-DNN. Right: Absolute errors.}
    \label{fig: Schrodinger solution}
\end{figure}

Before concluding \autoref{section: Numerical Experiments}, we briefly examine whether these 3 classes of PPDEs satisfy the neural network approximation hypotheses \autoref{assumption: mu to B} and \autoref{assumption: mu to fg} in \autoref{section: Preliminaries and Main Results}. Helmholtz equation and the static Schrödinger equation are linear PDEs and also PPDEs with linear dependence on the parameters. Hence, as we have discussed in \autoref{remark: assumption}, they clearly satisfy \autoref{assumption: mu to B} and \autoref{assumption: mu to fg} with zero approximation error $\varepsilon$ and simple shallow neural networks. For the sine-Gordon equation, although it is a nonlinear equation, the RBF-FD solver uses time discretization and solves a system of linear equations on each time step. Each coefficient matrix depends linearly on the parameters, but the right-hand side vector has an implicit nonlinear composite dependency on the parameters. We need to leverage non-trivial neural network approximation tools to provide a constructive verification for \autoref{assumption: mu to B} and \autoref{assumption: mu to fg}, which may be tedious and is beyond the scope of this paper.

\section{Conclusions}
In this paper, we approximate the parametric maps of PPDEs with neural networks under an RCM framework. Given an approximation accuracy, we establish the upper bounds for the depth and the number of non-zero parameters of the ReLU DNNs. The main result demonstrates that these bounds only depend on the given accuracy and dimension of the reduced basis, which provides a theoretical foundation for the effectiveness of combining RCM and DNN. In numerical experiments, we propose the POD-DNN algorithm for solving PPDEs. Our experiments cover PPDEs with diverse scenarios including irregular geometries, time-dependent PDEs, nonlinear PDEs, and high-dimensional parameters. POD-DNN achieves accuracy comparable to other algorithms while accelerating both training and inference processes. For future research, both in computational and theoretical aspects, different structures of neural networks, like convolutional neural networks, could be designed to solve the parametric system. Meanwhile, neural network approaches for solving PPDEs with nonlinear parameter dependency require further exploration.

\section*{Appendix}
\appendix
\section{RBF-FD Method for PDE}\label{appendix section: RBF-FD Method for PDE}
In this section, we review the RBF-FD method for approximating linear differential operators in \eqref{linear PPDE}. Readers may refer to \cite{Wendland2004Scattered} for definitions of RBF and conditionally positive definite function. 

Let $\ell \in \mathbb{N}$ and 
\[
    \Psi_{\ell-1}:=\mathrm{span}\left\{p: \mathbb{R}^n \to \mathbb{R} \mid p(x)=\prod_{i=1}^n x_i^{\alpha_i}, \alpha_i \in \mathbb{N}\cup \{0\}, \sum_{i=1}^n \alpha_i \leq \ell-1\right\}
\]
denotes the space of $n$-variate polynomials of degree at most $\ell-1$. We consider a conditionally positive definite functions $\varphi: [0, \infty) \to \mathbb{R}$ of order $\ell$ and a $\Psi_{\ell-1}$-unisolvent set of nodes $X = \left\{x_i\right\}_{i=1}^N \subset \Omega$, which contains interior and boundary nodes, i.e., $X= X_{\Omega}\cup X_{\partial \Omega}$ where $X_{\Omega}=\left\{x_1, \ldots, x_{N_I}\right\} \subset \Omega$ and $X_{\partial \Omega}=\left\{x_{N_I+1}, \ldots,x_N\right\} \subset \partial \Omega$. Let $X_i=\left\{x_{i_j}: j=1, \ldots, n_{\mathrm{loc}}^i\right\} \subset X$ be the local set of neighboring points of $x_i$ with $x_{i_1}=x_i$. $X_i$ serves as a stencil for $x_i$. The point $x_i$ is called the center node of the set $X_i$, and $n_{\mathrm{loc}}^i\in \mathbb{N}$ is the stencil size. 

We then consider a linear partial differential operator $\mathcal{L}$ and a sufficiently smooth function $u: \Omega \to \mathbb{R}$. The key idea of the RBF-FD method is to use a weighted sum of the function values at the nodes in the stencil $X_i$ to represent the differential operator applied to the function $u$ and evaluated locally at $x_i$, i.e.,
\begin{equation}\label{RBF-FD weighted sum}
    (\mathcal{L} u)\left(x_i\right) \approx \sum_{j=1}^{n_{\mathrm{loc}}^i} w^i_j u\left(x_{i_j}\right).
\end{equation}
Next, we demonstrate how these weights are computed for typical RBF spaces with (or without) polynomial augmentation. Let 
\[
    \left\{\varphi_{\epsilon, x_{i_j}}: \Omega \to \mathbb{R} \mid \varphi_{\epsilon, x_{i_j}}:= \varphi\left(\epsilon\left\|x-x_{i_j}\right\|\right), \epsilon > 0, j=1, \ldots, n_{\mathrm{loc}}^i\right\} 
\] 
denote the $n_{\mathrm{loc}}^i$ RBFs associated with the shape parameter $\epsilon$ and the nodes of the stencil $X_i$. Define $\displaystyle Q:= \mathrm{dim} \Psi_{\ell-1} = \binom{\ell+d-1}{d}$, and we denote a basis of $\Psi_{\ell-1}$ by $\left\{p_1, \ldots, p_Q\right\}$. RBFs associated with the stencil $X_i$ and augmented by polynomials up to degree $\ell-1$ have the following constrained function space:
\[
    \begin{aligned}
        \mathcal{R}_i:=\bigg\{s: \Omega \to \mathbb{R} \mid s(x) & =\sum_{j=1}^{n_{\mathrm{loc}}^i} \lambda^i_j \varphi_{\epsilon, x_{i_j}}(x)+\sum_{j=1}^Q \widetilde{\lambda}^i_j p_j(x), \lambda^i_j, \widetilde{\lambda}^i_j \in \mathbb{R} \\
           & \text { such that } \sum_{j=1}^{n_{\mathrm{loc}}^i}\lambda^i_j p_k\left(x_{i_j}\right) =0 \text { for all } k \in\{1, \ldots, Q\}\bigg\}.
    \end{aligned}
\]
For any function $u$, we can construct the interpolation function $s_u$ from the space $\mathcal{R}_{i}$, by solving the following linear system
\begin{equation}\label{interpolating linear system}
    \tilde{A}^i
    \begin{bmatrix}
        \lambda^i \\
        \tilde{\lambda}^i
    \end{bmatrix}
    :=
    \begin{bmatrix}
        A^i & P^i \\
        {P^i}^T & \mathbf{0}
    \end{bmatrix}
    \begin{bmatrix}
        \lambda^i \\
        \tilde{\lambda}^i
    \end{bmatrix}
    =
    \begin{bmatrix}
        u^i \\
        \mathbf{0}
    \end{bmatrix}
\end{equation}
where 
\[
    \begin{aligned}
        A^i &:= \left[\varphi\left(\epsilon\left\|x_{i_j}-x_{i_k}\right\|\right)\right]_{jk} \in \mathbb{R}^{n_{\mathrm{loc}}^i \times n_{\mathrm{loc}}^i}, &P^i &:= \left[p_k(x_{i_j})\right]_{jk} \in \mathbb{R}^{n_{\mathrm{loc}}^i \times Q},\\
        \lambda^i &:= \left[\lambda^i_1, \cdots \lambda^i_{n_{\mathrm{loc}}^i}\right]^T, &\tilde{\lambda}^i &:= \left[\tilde{\lambda}^i_1, \cdots \tilde{\lambda}^i_Q\right]^T, \\
        u^i &:= \left[u\left(x_{i_1}\right), \cdots, u\left(x_{i_{n_{\mathrm{loc}}^i}}\right)\right]^T, 
    \end{aligned}
\]
and $\mathbf{0}$ is all-zero vector or matrix. The interpolation problem has a unique solution with the assumption that $\varphi$ is conditionally positive definite of order $\ell$, and the node set $X$ is $\Psi_{\ell-1}$-unisolvent. We define cardinal basis functions $s_1, \ldots, s_{n^{i}_{\mathrm{loc}}} \in \mathcal{R}_i$ w.r.t the stencil notes in $X_i$, i.e., $s_{j}\left(x_{i_k}\right)=\delta_{jk}$, where $\delta_{jk}$ is the Kronecker delta. Then, the interpolation function $s_u$ can be expressed as  
\[
    s_u(x)=\sum_{j=1}^{n^{i}_{\mathrm{loc}}} u\left(x_{i_j}\right) s_j(x).
\]
The differential operator $\mathcal{L}$ on $s_u$ at some $x_i \in X$ is given by
\begin{equation}\label{differential operator on interpolant}
    \mathcal{L} s_u\left(x_i\right) = \sum_{j=1}^{n^{i}_{\mathrm{loc}}} u\left(x_{i_j}\right) \mathcal{L} s_j\left(x_i\right).
\end{equation}
Compared \eqref{differential operator on interpolant} with \eqref{RBF-FD weighted sum}, the weights can be computed by $w^i_j:=\mathcal{L} s_j\left(x_i\right).$ By the linear system \eqref{interpolating linear system} and the definition of $s_j$, we have
\[
    s_j(x)=\left[\varphi_{\epsilon, x_{i_1}}(x), \cdots , \varphi_{\epsilon, x_{i_{{n^{i}_{\mathrm{loc}}}}}}(x), p_1\left(x\right) ,\cdots, p_Q(x)\right] (\tilde{A}^{i})^{-1}
    \begin{bmatrix}
        e_j \\
        \mathbf{0}
    \end{bmatrix},
\]
where $e_j \in \mathbb{R}^n$ denotes the $j$-th unit vector. Hence, define 
\[
    \mathcal{L} \Phi^i = 
    \begin{bmatrix}
        \mathcal{L} \varphi_{\epsilon, x_{i_1}}(x_i), \cdots , \mathcal{L}\varphi_{\epsilon, x_{i_{{n^{i}_{\mathrm{loc}}}}}}(x_i)
    \end{bmatrix}, \quad 
    \mathcal{L} \mathbf{p}^i = 
    \begin{bmatrix}
        \mathcal{L} p_1\left(x_i\right) ,\cdots, \mathcal{L} p_Q(x_i)
    \end{bmatrix},
\]
then we obtain
\[
    \begin{bmatrix}
        w^i_1 , \ldots , w^i_{{n^{i}_{\mathrm{loc}}}}
    \end{bmatrix}
    =
    \begin{bmatrix}
        \mathcal{L} \Phi^i , \mathcal{L} \mathbf{p}^i
    \end{bmatrix} 
    (\tilde{A}^{i})^{-1}
    \begin{bmatrix}
        I_{n^{i}_{\mathrm{loc}}} \\
        \mathbf{0}
    \end{bmatrix},
\]
where $I_{n^{i}_{\mathrm{loc}}} \in \mathbb{R}^{n^{i}_{\mathrm{loc}} \times n^{i}_{\mathrm{loc}}}$ is the identity matrix.

Augmentation of the matrix on the right to an $(n^{i}_{\mathrm{loc}}+Q) \times(n^{i}_{\mathrm{loc}}+Q)$ identity matrix and introduction of dummy variables $\tilde{w}^i_j$ (which may later be discarded since they originated from the arbitrary augmentation to the identity matrix) leads to the linear system of equations
\begin{equation}\label{weights linear system}
    \tilde{A}^{i}
    \begin{bmatrix}
        w^i \\
        \tilde{w}^i
    \end{bmatrix}
    =
    \begin{bmatrix}
        \mathcal{L} \Phi^i \\
        \mathcal{L} \mathbf{p}^i
    \end{bmatrix}
\end{equation}
where 
\[
    w^i:= 
    \begin{bmatrix}
        w^i_1 , \ldots , w^i_{{n^{i}_{\mathrm{loc}}}}
    \end{bmatrix}^T, \quad
    \tilde{w}^i:=
    \begin{bmatrix}
        \tilde{w}^i_1 , \ldots , \tilde{w}^i_Q
    \end{bmatrix}^T
\]
for the desired stencil weights $w^i_j$ in the approximation \eqref{RBF-FD weighted sum}. Without polynomial augmentation, \eqref{weights linear system} simplifies to $A^i w^i=\mathcal{L} \Phi^i$.

Now we apply the RBF-FD method to the discretization of \eqref{linear PPDE} with the Dirichlet boundary condition
\begin{equation}\label{Dirichlet boundary PDE}
    \left\{ 
        \begin{aligned}
            &\mathcal{L} u(x)  =  f(x), \quad  && x \in \Omega,   \\
            &u(x)  =  g(x),  \quad &&x \in \partial \Omega.
        \end{aligned}
    \right.
\end{equation}
For each interior node $x_i \in X_{\Omega}$, we compute a stencil $X_i \subset X$ of size $n^{i}_{\mathrm{loc}}$ and stencil weights $w^i$ by solving \eqref{weights linear system}. The stencil weights are used to compute $u_i$ to approximate  $u\left(x_i\right)$ where $u$ is the solution of \eqref{Dirichlet boundary PDE}, i.e., $u_i \approx u\left(x_i\right)$ at all interior nodes $x_i \in X_{\Omega}$, by enforcing equality in \eqref{RBF-FD weighted sum},
\begin{equation}\label{eq:pde-dis}
    \sum_{j=1}^{n^i_{\mathrm{loc}}} w^i_j u(x_{i_j}) = f\left(x_i\right), \quad i \in\left\{1, \ldots, N_I\right\}.
\end{equation}
From \eqref{Dirichlet boundary PDE}, we have $u_{i_j} = g\left(x_{i_j}\right)$ for all $x_{i_j} \in \partial \Omega$. By substituting the Dirichlet boundary data from \eqref{Dirichlet boundary PDE} into \eqref{eq:pde-dis}, we obtain
\[
    \sum_{\substack{j \in\{1, \ldots, n^i_{\mathrm{loc}}\} \\ \text { s.t. } x_{i_j} \in \Omega}} w_{i_j} u_{i_j}=f\left(x_i\right)-\sum_{\substack{j \in\{1, \ldots, n^i_{\mathrm{loc}}\} \\ \text { s.t. } x_{i_j} \in \partial \Omega}} w_{i_j} g\left(x_{i_j}\right) =: \tilde{f}_i,\quad i \in \{1,\cdots,N_I\}.
\]
Hence, we define the right hand side vector $\tilde{\mathbf{f}}:=\left(\tilde{f}_1, \ldots, \tilde{f}_{N_I}\right)^T \in \mathbb{R}^{N_I}$ and the global stiffness matrix $L^{I} \in \mathbb{R}^{N_I \times N_I}$, containing the row-wise weights of the interior nodes of the $i$-th stencil,
\[
    L^{I}_{i, k}:=\left\{\begin{aligned}
        w_{i_j} && &\exists j \in\{1, \ldots, n^i_{\mathrm{loc}}\} \text { such that } x_{k}=x_{i_j} \text { is in the stencil } X_i, \\
        0 && &\text {otherwise}.
    \end{aligned}\right.
\]
Then, the approximation can be computed by the following linear system
\[
    L^{I}\mathbf{u}_{I} = \tilde{\mathbf{f}},
\]
where $\mathbf{u}_{I} = \left(u_1, \ldots, u_{N_I}\right)^T$ is the discretized approximated solution on interior nodes. This linear system can also contain the boundary nodes and be expressed in a more compact form 
\begin{equation}\label{RBF-FD system}
    \left[
        \begin{array}{c}
            L \\
            \hline
            \begin{array}{c|c}
            \mathbf{0} & I_{N-N_{I}}
            \end{array}
        \end{array}
    \right]
    \begin{bmatrix}
        \mathbf{u}_{I} \\
        \mathbf{u}_{B}
    \end{bmatrix} 
    = 
    \begin{bmatrix}
        \mathbf{f} \\
        \mathbf{g}
    \end{bmatrix},
\end{equation}
where $I_{N-N_{I}}$ is the identity matrix, $L$ is defined in the same way as $L^I$, 
$\mathbf{u}_{B} = \left(u_{N_I+1}, \ldots, u_{N}\right)^T$, $\mathbf{f} = \left(f(x_1), \ldots, f(x_{N_I})\right)^T$, and $\mathbf{g} = \left(g(x_{N_I+1}), \ldots, g(x_{N})\right)^T.$ The discretized approximated solution $\mathbf{u} = (\mathbf{u}_{I}^T, \mathbf{u}_{B}^T)^T$ is the so-called high-fidelity solution in the context of RBM. We note that \eqref{RBF-FD system} is exactly of the same form as the discretized linear equation system \eqref{linear equation system} of the PPDE \eqref{linear PPDE}.

It is also possible to consider boundary conditions associated with differential operators, including the Neumann boundary condition and other mixed boundary conditions. Near the boundary, the stencils are one-sided and hence fail to approximate the differential operator due to the Runge phenomenon. To address this issue, one may need to add one or several disk layers of ghost nodes outside the domain. The function values at the ghost node can be computed by the Neumann boundary condition and then used for approximating the differential operator of the governing equation. See \cite{Larsson2003numerical, Fornberg2006Pseudospectral, Flyer2016Enhancing}. 

Although we only consider time-independent linear PDEs above, RBF-FD can also be utilized in nonlinear PDEs and time-dependent PDEs. For nonlinear PDEs, RBF-FD can solve the linearized PDE at each iteration of Newton's method \cite{Bayona2017role}. For time-dependent PDEs, RBF-FD can give the approximate solution at scattered points at each time and then be combined with various time discretization methods to obtain the complete numerical solutions. More applications of RBF-FD to PDEs can be found in the review paper \cite{Fornberg2015Solving}.

\section{Tables of Hyperparameters}\label{appendix section: Tables of Hyperparameters}
\begin{table}[htbp!]
    \centering
    \begin{tabular}{ll}
        \toprule
        Notation & Meaning \\
        \midrule
        \textbf{RBF-FD} & \\
		$N$ & The number of discrete points \\
		$N_{I}$ & The number of discrete points in interior \\
		$N_{B}$ & The number of discrete points on boundary \\
		$\varphi_\epsilon$ & The RBF \\
		$\epsilon$ & The shape parameter of RBF \\
		$n_\mathrm{loc}$ & The stencil size \\
		\midrule 
		\textbf{POD} & \\
		$n_s$ & The number of parameters used to calculate \\
		& \quad \quad the snapshot matrix \\
		$\varepsilon_{\mathrm{POD}}$ & The error tolerance for POD \\
		$d$ & Dimension of the reduced basis \\
		\midrule
		\textbf{Neural network} & \\
		$n_{\mathrm{data}} = n_{\mathrm{train}} + n_{\mathrm{val}} + n_{\mathrm{test}}$ & Sizes of total, training, validation, and test\\
		& \quad \quad data set \\
		$n_{\mathrm{epoch}}$ & The number of training epochs \\
		lr & The learning rate \\
		$n_{\mathrm{bs}}$ & Batch size \\
		$L$ & The number of hidden layers \\
		$n_{\mathrm{neurons}}$ & The width of each hidden layers \\
        \bottomrule
    \end{tabular}
    \caption{Notations of hyperparameters.}
    \label{table: notations of hyperparameters}
\end{table}
\begin{table}[htbp!]
    \centering
    \begin{tabular}{lcccccc}
        \toprule
        PDE & $N$ & $N_{I}$ & $N_{B}$ & $\varphi_\epsilon$ & $\epsilon$ & $n_\mathrm{loc}$ \\
        \midrule
        Helmholtz & 1036 & 859 & 177 & IMQ & 3 & 50 \\ 
		Sine-Gordon & 328149 & 305525 & 22624 & IMQ & 1 & 25\\
		Static Schrödinger & 2276 & 1691 & 585 & IMQ & 1 & 50\\
        \bottomrule
    \end{tabular}
    \caption{Hyperparameters of RBF-FD. Here, IMQ are the inverse multiquadrics $\displaystyle \varphi_{\epsilon}(r) = \frac{1}{\sqrt{1 + (\epsilon r)^2}}$. They are positive definite functions, and hence the RBF-FD is implemented without polynomial augmentation.}
    \label{table: hyperparameters of RBF-FD}
\end{table}
\begin{table}[htbp!]
    \centering
    \begin{tabular}{lccc}
        \toprule
        PDE & $n_s$ & $\varepsilon_{\mathrm{POD}}$ & $d$ \\
        \midrule
        Helmholtz & 100 & $10^{-6}$ & 17 \\
		Sine-Gordon & 100 & $0.01$ & 27 \\
		Static Schrödinger & 300 & $0.03$ & 57\\
        \bottomrule
    \end{tabular}
    \caption{Hyperparameters of POD.}
    \label{table: hyperparameters of POD}
\end{table}
\begin{table}[htbp!]
    \centering
    \begin{tabular}{lccccccc}
        \toprule
        PDE & $n_{\mathrm{data}}$ & $n_{\mathrm{epoch}}$ & lr & $n_{\mathrm{bs}}$ & $L$ & $n_{\mathrm{neurons}}$ \\
        \midrule
        Helmholtz & $10000=6000+2000+2000$ & 2000 & $10^{-4}$ & 100 & 2 & 500 \\
		Sine-Gordon & $1024=768+128+128$ & 2000 & $10^{-4}$ & 16 & 2 & 500 \\
		Static Schrödinger & $500=300+100+100$ & 2000 & $10^{-4}$ & 10 & 2 & 500 \\
        \bottomrule
    \end{tabular}
    \caption{Hyperparameters of neural network. We also note that all DNNs are trained with the Adam \cite{Kingma2015Adam} optimizer.}
    \label{table: hyperparameters of neural network}
\end{table}


\bibliographystyle{plain}
\bibliography{references.bib}

\begin{thebibliography}{10}

\bibitem{AndreiI2000using}
Tolstykh Andrei~I.
\newblock On using {{RBF-based}} differencing formulas for unstructured and
  mixed structured-unstructured grid calculations.
\newblock In {\em Proceedings of the 16th {{IMACS World Congress}}}, volume
  Vol. 228, pages 4606--4624, Lausanne, 2000.

\bibitem{Bachmayr2016Tensor}
Markus Bachmayr, Reinhold Schneider, and Andr{\'e} Uschmajew.
\newblock Tensor networks and hierarchical tensors for the solution of
  high-dimensional partial differential equations.
\newblock {\em Found. Comput. Math.}, 16(6):1423--1472, 2016.

\bibitem{Bayona2017role}
Victor Bayona, Natasha Flyer, Bengt Fornberg, and Gregory~A. Barnett.
\newblock On the role of polynomials in {{RBF-FD}} approximations: {{II}}.
  {{Numerical}} solution of elliptic {{PDEs}}.
\newblock {\em J. Comput. Phys.}, 332:257--273, 2017.

\bibitem{Chen2018Greedy}
Wang Chen, Jan~S. Hesthaven, Bai Junqiang, Yasong Qiu, Zhang Yang, and Yang
  Tihao.
\newblock Greedy nonintrusive reduced order model for fluid dynamics.
\newblock {\em AIAA J.}, 56(12):4927--4943, 2018.

\bibitem{Chen2013Reduceda}
Yanlai Chen and Sigal Gottlieb.
\newblock Reduced collocation methods: Reduced basis methods in the collocation
  framework.
\newblock {\em J. Sci. Comput.}, 55(3):718--737, 2013.

\bibitem{Chen2016Reduced}
Yanlai Chen, Sigal Gottlieb, Alfa Heryudono, and Akil Narayan.
\newblock A reduced radial basis function method for partial differential
  equations on irregular domains.
\newblock {\em J. Sci. Comput.}, 66(1):67--90, 2016.

\bibitem{Christensen.1999Evaluation}
Erik~Adler Christensen., Morten Br{\o}ns, and Jens~N{\o}rk{\ae}r S{\o}rensen.
\newblock Evaluation of proper orthogonal decomposition--based decomposition
  techniques applied to parameter-dependent nonturbulent flows.
\newblock {\em SIAM J. Sci. Comput.}, 21(4):1419--1434, 1999.

\bibitem{Dahmen2016Tensor-Sparsity}
Wolfgang Dahmen, Ronald DeVore, Lars Grasedyck, and Endre S{\"u}li.
\newblock Tensor-sparsity of solutions to high-dimensional elliptic partial
  differential equations.
\newblock {\em Found. Comput. Math.}, 16(4):813--874, 2016.

\bibitem{DalSanto2020Dataa}
Niccol{\`o} Dal~Santo, Simone Deparis, and Luca Pegolotti.
\newblock Data driven approximation of parametrized {PDE}s by reduced basis and
  neural networks.
\newblock {\em J. Comput. Phys.}, 416:109550, 2020.

\bibitem{E2018Deep}
Weinan E and Bing Yu.
\newblock The deep {Ritz} method: A deep learning-based numerical algorithm for
  solving variational problems.
\newblock {\em Commun. Math. Stat.}, 6(1):1--12, 2018.

\bibitem{Flyer2016Enhancing}
Natasha Flyer, Gregory~A. Barnett, and Louis~J. Wicker.
\newblock Enhancing finite differences with radial basis functions:
  {{Experiments}} on the {{Navier}}--{{Stokes}} equations.
\newblock {\em J. Comput. Phys.}, 316:39--62, 2016.

\bibitem{Fornberg2006Pseudospectral}
Bengt Fornberg.
\newblock A pseudospectral fictitious point method for high order
  initial-boundary value problems.
\newblock {\em SIAM J. Sci. Comput.}, 28(5):1716--1729, 2006.

\bibitem{Fornberg2015Fast}
Bengt Fornberg and Natasha Flyer.
\newblock Fast generation of 2-{{D}} node distributions for mesh-free {{PDE}}
  discretizations.
\newblock {\em Comput. Math. Appl.}, 69(7):531--544, 2015.

\bibitem{Fornberg2015Solving}
Bengt Fornberg and Natasha Flyer.
\newblock Solving {{PDEs}} with radial basis functions.
\newblock {\em Acta Numer.}, 24:215--258, 2015.

\bibitem{Franco2023deep}
Nicola Franco, Andrea Manzoni, and Paolo Zunino.
\newblock A deep learning approach to reduced order modelling of parameter
  dependent partial differential equations.
\newblock {\em Math. Comput.}, 92(340):483--524, 2023.

\bibitem{Geist2021Numerical}
Moritz Geist, Philipp Petersen, Mones Raslan, Reinhold Schneider, and Gitta
  Kutyniok.
\newblock Numerical solution of the parametric diffusion equation by deep
  neural networks.
\newblock {\em J. Sci. Comput.}, 88(1):22, 2021.

\bibitem{Hesthaven2007Spectral}
Jan~S. Hesthaven, Sigal Gottlieb, and David Gottlieb.
\newblock {\em Spectral Methods for Time-Dependent Problems}, volume~21.
\newblock Cambridge University Press, Cambridge, 2007.

\bibitem{Hesthaven2018Non-intrusive}
Jan~S. Hesthaven and Stefano Ubbiali.
\newblock Non-intrusive reduced order modeling of nonlinear problems using
  neural networks.
\newblock {\em J. Comput. Phys.}, 363:55--78, 2018.

\bibitem{Kansa1990Multiquadrics}
E.~J. Kansa.
\newblock Multiquadrics---{{A}} scattered data approximation scheme with
  applications to computational fluid-dynamics---{{I}} surface approximations
  and partial derivative estimates.
\newblock {\em Comput. Math. Appl.}, 19(8):127--145, 1990.

\bibitem{Khoo2021Solvinga}
Yuehaw Khoo, Jianfeng Lu, and Lexing Ying.
\newblock Solving parametric {{PDE}} problems with artificial neural networks.
\newblock {\em European J. Appl. Math.}, 32(3):421–435, 2021.

\bibitem{Kingma2015Adam}
Diederik~P. Kingma and Jimmy Ba.
\newblock Adam: A method for stochastic optimization.
\newblock In {\em International Conference on Learning Representations}, 2015.

\bibitem{Kunisch2001Galerkin}
Karl Kunisch and Stefan Volkwein.
\newblock Galerkin proper orthogonal decomposition methods for parabolic
  problems.
\newblock {\em Numer. Math.}, 90(1):117--148, 2001.

\bibitem{Kutyniok2022Theoreticala}
Gitta Kutyniok, Philipp Petersen, Mones Raslan, and Reinhold Schneider.
\newblock A theoretical analysis of deep neural networks and parametric {PDE}s.
\newblock {\em Constr. Approx.}, 55(1):73--125, 2022.

\bibitem{Larsson2003numerical}
Elisabeth Larsson and Bengt Fornberg.
\newblock A numerical study of some radial basis function based solution
  methods for elliptic {{PDEs}}.
\newblock {\em Comput. Math. Appl.}, 46(5):891--902, 2003.

\bibitem{Lei2022Solving}
Zhen Lei, Lei Shi, and Chenyu Zeng.
\newblock Solving parametric partial differential equations with deep rectified
  quadratic unit neural networks.
\newblock {\em J. Sci. Comput.}, 93(3):80, 2022.

\bibitem{Li2021Fourier}
Zongyi Li, Nikola~Borislavov Kovachki, Kamyar Azizzadenesheli, Burigede Liu,
  Kaushik Bhattacharya, Andrew Stuart, and Anima Anandkumar.
\newblock Fourier neural operator for parametric partial differential
  equations.
\newblock In {\em International Conference on Learning Representations}, 2021.

\bibitem{Lu2021Learning}
Lu~Lu, Pengzhan Jin, Guofei Pang, Zhongqiang Zhang, and George~Em Karniadakis.
\newblock Learning nonlinear operators via {DeepONet} based on the universal
  approximation theorem of operators.
\newblock {\em Nat. Mach. Intell.}, 3(3):218--229, 2021.

\bibitem{Mishra2019NodeLab}
Pankaj~K. Mishra.
\newblock {{NodeLab}}: {{A MATLAB}} package for meshfree node-generation and
  adaptive refinement.
\newblock {\em J. Open Source Softw.}, 4(40):1173, 2019.

\bibitem{Prudhomme2001Reliable}
Christophe Prud’Homme, Dimitrios~V. Rovas, Karen Veroy, Luc Machiels, Yvon
  Maday, Anthony~T. Patera, and Gabriel Turinici.
\newblock Reliable real-time solution of parametrized partial differential
  equations: Reduced-basis output bound methods.
\newblock {\em J. Fluids Eng.}, 124(1):70--80, 2001.

\bibitem{Quarteroni2016Reduced}
Alfio Quarteroni, Andrea Manzoni, and Federico Negri.
\newblock {\em Reduced Basis Methods for Partial Differential Equations},
  volume~92.
\newblock Springer, Cham, 2016.

\bibitem{Raissi2019Physicsinformed}
Maziar Raissi, Paris~G. Perdikaris, and George~Em Karniadakis.
\newblock Physics-informed neural networks: A deep learning framework for
  solving forward and inverse problems involving nonlinear partial differential
  equations.
\newblock {\em J. Comput. Phys.}, 378:686--707, 2019.

\bibitem{Rathinam2003New}
Muruhan Rathinam and Linda~R. Petzold.
\newblock A new look at proper orthogonal decomposition.
\newblock {\em SIAM J. Numer. Anal.}, 41(5):1893--1925, 2003.

\bibitem{Shankar2018Robust}
Varun Shankar, Robert~M. Kirby, and Aaron~L. Fogelson.
\newblock Robust node generation for mesh-free discretizations on irregular
  domains and surfaces.
\newblock {\em SIAM J. Sci. Comput.}, 40(4):A2584--A2608, 2018.

\bibitem{Sirignano2018DGM}
Justin Sirignano and Konstantinos Spiliopoulos.
\newblock {DGM}: A deep learning algorithm for solving partial differential
  equations.
\newblock {\em J. Comput. Phys.}, 375:1339--1364, 2018.

\bibitem{Su2019Numerical}
LingDe Su.
\newblock Numerical solution of two-dimensional nonlinear sine-{{Gordon}}
  equation using localized method of approximate particular solutions.
\newblock {\em Eng. Anal. Bound. Elem.}, 108:95--107, 2019.

\bibitem{Suchde2023Pointa}
Pratik Suchde, Thibault Jacquemin, and Oleg Davydov.
\newblock Point cloud generation for meshfree methods: An overview.
\newblock {\em Arch. Comput. Methods Eng.}, 30(2):889--915, 2023.

\bibitem{Tripathy2018Deepa}
Rohit~K. Tripathy and Ilias Bilionis.
\newblock Deep {UQ}: Learning deep neural network surrogate models for high
  dimensional uncertainty quantification.
\newblock {\em J. Comput. Phys.}, 375:565--588, 2018.

\bibitem{Wendland2004Scattered}
Holger Wendland.
\newblock {\em Scattered Data Approximation}.
\newblock Cambridge University Press, Cambridge, 2004.

\end{thebibliography}
\end{document}